\newcommand{\nc}{\newcommand}
\nc{\exto}[1]{\stackrel{#1}{\longrightarrow}}
\nc{\dlim}{{\mathop{\lim\limits_{\longrightarrow}\,}}}
\nc{\ilim}{{\mathop{\lim\limits_{\longleftarrow}\,}}}
\nc{\hocolim}{{\mathop{\sf hocolim}\,}}
\nc{\holim}{{\mathop{\sf holim}}}
\nc{\lan}{\big\langle}
\nc{\ran}{\big\rangle}
\nc{\kk}{{\mathsf{k}}}
\nc{\C}{{\mathbb{C}}}
\nc{\HH}{{\mathbf{H}}}
\nc{\LL}{{\mathbb{L}}}
\nc{\RR}{{\mathbb{R}}}
\nc{\PP}{{\mathbb{P}}}
\nc{\OO}{{\mathbb{O}}}
\renewcommand{\SS}{{\mathbb{S}}}
\nc{\QQ}{{\mathbb{Q}}}
\nc{\ZZ}{{\mathbb{Z}}}
\nc{\CA}{{\mathcal{A}}}
\nc{\CB}{{\mathcal{B}}}
\nc{\CC}{{\mathcal{C}}}
\nc{\D}{{\mathcal{D}}}
\nc{\CE}{{\mathcal{E}}}
\nc{\CF}{{\mathcal{F}}}
\nc{\CG}{{\mathcal{G}}}
\nc{\CH}{{\mathcal{H}}}
\nc{\CL}{{\mathcal{L}}}
\nc{\CM}{{\mathcal{M}}}
\nc{\CN}{{\mathcal{N}}}
\nc{\CO}{{\mathcal{O}}}
\nc{\CQ}{{\mathcal{Q}}}
\nc{\CR}{{\mathcal{R}}}
\nc{\CS}{{\mathcal{S}}}
\nc{\CT}{{\mathscr{T}}}
\nc{\CU}{{\mathcal{U}}}
\nc{\CV}{{\mathcal{V}}}
\nc{\CW}{{\mathcal{W}}}
\nc{\CX}{{\mathcal{X}}}
\nc{\CY}{{\mathcal{Y}}}
\nc{\CMo}{{\mathcal{M}^\circ}}
\nc{\Co}{{{C}^\circ}}
\nc{\BY}{{\overline{Y}}}
\nc{\BYD}{{\overline{Y}{}^{|D|}}}
\nc{\OZ}{{\overline{Z}}}
\nc{\bg}{{\bar{g}}}
\nc{\bq}{{\mathbf{q}}}
\nc{\BC}{{\mathbf{C}}}
\nc{\BD}{{\mathbf{D}}}
\nc{\BG}{{\mathbf{G}}}
\nc{\BM}{{\mathbf{M}}}
\nc{\BP}{{\mathbf{P}}}
\nc{\BZ}{{\mathbf{Z}}}
\nc{\BPr}{{\mathsf{P}}}
\nc{\BR}{{\mathbf{R}}}
\nc{\BRO}[1]{{{\mathbf{R}}^{\circ}_{#1}}}
\nc{\BRD}[1]{{{\mathbf{R}}^{|D|}_{#1}}}
\nc{\BRP}[1]{{{\mathbf{R}}^{1}_{#1}}}
\nc{\BRTP}[1]{{{\mathbf{\tilde{R}}}{}^{1}_{#1}}}
\nc{\BS}{{\mathbf{S}}}
\nc{\BMS}{{{\mathbf{M}}^{{s}}}}
\nc{\BMSS}{{{\mathbf{M}}^{{ss}}}}
\nc{\BMZ}{{\mathbf{M}^{\circ}}}
\nc{\BCL}{{\mathbf{L}}}
\nc{\PCC}{{{}^\perp\CC}}
\nc{\Cl}{{\mathsf{Cliff}}}
\nc{\Clev}{{\mathop{\mathsf{Cliff}}^{\circ}}}
\nc{\FA}{{\mathfrak{A}}}
\nc{\FB}{{\mathfrak{B}}}
\nc{\fa}{{\mathfrak{a}}}
\nc{\fb}{{\mathfrak{b}}}
\nc{\fg}{{\mathfrak{g}}}
\nc{\fn}{{\mathfrak{n}}}
\nc{\fp}{{\mathfrak{p}}}
\nc{\FD}{{\mathfrak{D}}}
\nc{\FE}{{\mathfrak{E}}}
\nc{\FL}{{\mathfrak{L}}}
\nc{\FM}{{\mathfrak{M}}}
\nc{\FS}{{\mathsf{S}}}
\nc{\sfc}{{\mathsf{c}}}
\nc{\sfch}{{\mathsf{ch}}}
\nc{\sfh}{{\mathsf{h}}}
\nc{\SK}{{\mathsf{K}}}
\nc{\SM}{{\mathsf{M}}}
\nc{\SO}{{\mathsf{O}}}
\nc{\SQ}{{\mathsf{Q}}}
\nc{\SPV}{{\mathsf{S}^+\mathsf{V}}}
\nc{\SMV}{{\mathsf{S}^-\mathsf{V}}}
\nc{\SPMV}{{\mathsf{S}^\pm\mathsf{V}}}
\nc{\SSS}{{\mathsf{S}}}
\nc{\SX}{{S_X}}
\nc{\SY}{{S_Y}}
\nc{\phipsi}{{q}}
\nc{\eps}{\varepsilon}
\nc{\pim}{{\pi_-}}
\nc{\pip}{{\pi_+}}
\nc{\BE}{{\overline{\CE}}}
\nc{\TE}{{\tilde{\CE}}}
\nc{\TQ}{{\tilde{Q}}}
\nc{\TCF}{{\tilde{\CF}}}
\nc{\TCG}{{\tilde{\CG}}}
\nc{\TCL}{{\tilde{\CL}}}
\nc{\TF}{{\tilde{F}}}
\nc{\TW}{{\tilde{W}}}
\nc{\TCC}{{\tilde{\CC}}}
\nc{\TCX}{{\tilde{\CX}}}
\nc{\TCY}{{\tilde{\CY}}}
\nc{\TPhi}{{\tilde{\Phi}}}
\nc{\OPhi}{{\bar{\Phi}}}
\nc{\txi}{{\tilde{\xi}}}
\nc{\tp}{{\tilde{p}}}
\nc{\tq}{{\tilde{q}}}
\nc{\tzeta}{{\tilde{\zeta}}}
\nc{\tpi}{{\tilde{\pi}}}
\nc{\halpha}{{\hat{\alpha}}}
\nc{\HCA}{{\hat{\CA}}}
\nc{\HCB}{{\hat{\CB}}}
\nc{\HCC}{{\hat{\CC}}}
\nc{\HE}{{\widehat{\CE}}}
\nc{\HX}{{\hat{X}}}
\nc{\hxi}{{\hat{\xi}}}
\nc{\UH}{{\mathcal{H}}}
\nc{\TM}{{\widetilde{M}}}
\nc{\TCM}{{\widetilde{\CM}}}
\nc{\TU}{{\widetilde{U}}}
\nc{\TX}{{\widetilde{X}}}
\nc{\TY}{{\widetilde{Y}}}
\nc{\TYO}{{{\widetilde{Y}}^\circ}}
\nc{\barf}{{\bar{f}}}
\nc{\te}{{\tilde{e}}{}}
\nc{\tf}{{\tilde{f}}}
\nc{\tg}{{\tilde{g}}}
\nc{\ti}{{\tilde{\imath}}}
\nc{\tj}{{\tilde{\jmath}}}
\nc{\ty}{{\tilde{y}}}
\nc{\tphi}{{\tilde{\phi}}}
\nc{\urho}{{\underline{\rho}}}
\nc{\LRA}{\Leftrightarrow}
\nc{\RA}{\Rightarrow}
\nc{\lotimes}{\mathbin{\mathop{\otimes}\limits^{\mathbb{L}}}}
\nc{\CEnd}{\mathop{\mathcal{E}\mathit{nd}}\nolimits}
\nc{\CExt}{\mathop{\mathcal{E}\mathit{xt}}\nolimits}
\nc{\CHom}{\mathop{\mathcal{H}\mathit{om}}\nolimits}
\nc{\RH}{\mathop{{\mathsf{R}}\Gamma}\nolimits}
\nc{\RGamma}{\mathop{{\mathsf{R}}\Gamma}\nolimits}
\nc{\RHom}{\mathop{\mathsf{RHom}}\nolimits}
\nc{\RCHom}{\mathop{\mathsf{R}\mathcal{H}\mathit{om}}\nolimits}
\nc{\RG}{\mathop{\mathsf{R\Gamma}}\nolimits}
\nc{\Hom}{\mathop{\mathsf{Hom}}\nolimits}
\nc{\Ext}{\mathop{\mathsf{Ext}}\nolimits}
\nc{\End}{\mathop{\mathsf{End}}\nolimits}
\nc{\Tor}{\mathop{\mathsf{Tor}}\nolimits}
\nc{\Tordim}{\mathop{\mathsf{Tor}\text{\rm-}\mathsf{dim}}\nolimits}
\nc{\Hilb}{\mathop{\mathsf{Hilb}}\nolimits}
\nc{\Spec}{\mathop{\mathsf{Spec}}\nolimits}
\nc{\Pic}{\mathop{\mathsf{Pic}}\nolimits}
\renewcommand{\Im}{\mathop{\mathsf{Im}}\nolimits}
\nc{\Tr}{\mathop{\mathsf{Tr}}\nolimits}
\nc{\Cone}{\mathop{\mathsf{Cone}}\nolimits}
\nc{\Fiber}{\mathop{\mathsf{Fiber}}\nolimits}
\nc{\Ker}{\mathop{\mathsf{Ker}}\nolimits}
\nc{\Coker}{\mathop{\mathsf{Coker}}\nolimits}
\nc{\codim}{\mathop{\mathsf{codim}}\nolimits}
\nc{\sing}{{\mathsf{sing}}}
\nc{\supp}{\mathop{\mathsf{supp}}}
\nc{\vol}{\mathop{\mathsf{vol}}\nolimits}
\nc{\ch}{\mathop{\mathsf{ch}}\nolimits}
\nc{\perf}{{\mathsf{perf}}}
\nc{\rank}{\mathop{\mathsf{rank}}}
\nc{\Pf}{{\mathsf{Pf}}}
\nc{\Gr}{{\mathsf{Gr}}}
\nc{\OGr}{{\mathsf{OGr}}}
\nc{\Flag}{{\mathsf{Fl}}}
\nc{\Kosz}{{\mathsf{Kosz}}}
\nc{\IGr}{{\mathsf{IGr}}}
\nc{\LGr}{{\mathsf{LGr}}}
\nc{\SGr}{{\mathsf{SGr}}}
\nc{\GGT}{{\mathsf{G_2}}}
\nc{\GTGr}{{\mathsf{G_2Gr}}}
\nc{\GTF}{{\mathsf{G_2F}}}
\nc{\OF}{{\mathsf{OF}}}
\nc{\Fl}{{\mathsf{Fl}}}
\nc{\Bl}{{\mathsf{Bl}}}
\nc{\GL}{{\mathsf{GL}}}
\nc{\PGL}{{\mathsf{PGL}}}
\nc{\SL}{{\mathsf{SL}}}
\nc{\SP}{{\mathsf{Sp}}}
\nc{\Spin}{{\mathsf{Spin}}}
\nc{\Tot}{{\mathsf{Tot}}}
\nc{\ev}{{\mathsf{ev}}}
\nc{\od}{{\mathsf{odd}}}
\nc{\coev}{{\mathsf{coev}}}
\nc{\id}{{\mathsf{id}}}
\nc{\opp}{{\mathsf{opp}}}
\nc{\PS}{{{\PP^3}}}
\nc{\Qu}{{{Q^3}}}
\nc{\tdim}{\mathop{\Tor\dim}}
\nc{\ecart}{{\fbox{$\scriptstyle\mathsf{EC}$}}}
\nc{\ad}{{\mathop{\mathsf ad}}}
\nc{\sg}{{\mathop{\mathsf sg}}}
\nc{\hf}{{\mathop{\mathsf hf}}}
\nc{\gr}{{\mathop{\mathsf gr}}}
\nc{\qgr}{{\mathop{\mathsf qgr}}}
\nc{\Coh}{{\mathop{\mathsf Coh}}}
\nc{\Ab}{{\mathop{\mathcal{A}\mathit{b}}}}
\nc{\Ccoh}{{\mathop{\mathsf Ccoh}}}
\nc{\Qcoh}{{\mathop{\mathsf Qcoh}}}
\nc{\At}{\mathop{\mathsf{At}}}
\nc{\tra}{{\mathsf{T}}}
\nc{\fsl}{{\mathfrak{sl}}}
\nc{\fso}{{\mathfrak{so}}}
\nc{\fgl}{{\mathfrak{gl}}}
\nc{\AAV}{{\mathcal{AAV}}}
\nc{\Rep}{{\mathsf{Rep}}}
\nc{\Cubics}{{{\mathcal{S}}_3}}
\nc{\VFT}{{{\mathcal{S}}_{14}}}
\nc{\VFTE}{{{\mathcal{N}}_{\mathrm{reg,sm}}}}
\nc{\MX}{{\CM_X}}
\nc{\MY}{{\CM_Y}}
\nc{\MYE}{{\CM_{Y,\CE}}}
\nc{\Yd}{{Y_d}}
\nc{\Yfive}{{Y_5}}
\nc{\Xg}{{X_{2g-2}}}
\nc{\Xtt}{{X_{22}}}
\nc{\Xst}{{X_{16}}}
\nc{\Xtw}{{X_{12}}}
\nc{\Xe}{{X_{8}}}
\nc{\Xf}{{X_{4}}}
\nc{\git}{{/\!\!/\!{}_\chi}}
\nc{\HOH}{{\mathsf H\mathsf H}}
\nc{\HHE}{{\mathsf H\mathsf E}}
\theoremstyle{plain}
\newtheorem{theorem}{Theorem}[section]
\newtheorem{conjecture}[theorem]{Conjecture}
\newtheorem{lemma}[theorem]{Lemma}
\newtheorem{proposition}[theorem]{Proposition}
\newtheorem{corollary}[theorem]{Corollary}
\theoremstyle{definition}
\newtheorem{definition}[theorem]{Definition}
\newtheorem{example}[theorem]{Example}
\theoremstyle{remark}
\newtheorem{remark}[theorem]{Remark}
\newcommand{\epsi}{\epsilon}
\title{Calabi--Yau and fractional Calabi--Yau categories}
\author{Alexander Kuznetsov}
\address{{\sloppy
\parbox{0.9\textwidth}{
Steklov Mathematical Institute,
8 Gubkin str., Moscow 119991 Russia
\\[5pt]
The Poncelet Laboratory, Independent University of Moscow
\hfill\\[5pt]
Laboratory of Algebraic Geometry, National Research University Higher School of Economics
}\bigskip}}
\email{akuznet@mi.ras.ru}
\date{}
\thanks{I was partially supported by a subsidy granted to the HSE by the Government of the Russian Federation 
for the implementation of the Global Competitiveness Program and by RFBR 14-01-00416, 15-01-02164, 15-51-50045 and by the Simons foundation.}
\begin{document}

\begin{abstract}
We discuss Calabi--Yau and fractional Calabi--Yau semiorthogonal components of derived categories
of coherent sheaves on smooth projective varieties. The main result is a general construction of 
a fractional Calabi--Yau category from a rectangular Lefschetz decomposition and a spherical functor. 
We give many examples of application of this construction and discuss some general properties of Calabi--Yau categories.
\end{abstract}

\maketitle


\section{Introduction}

Projective varieties with trivial canonical class (Calabi--Yau varieties) form a very important class
of varieties in algebraic geometry. Their importance is emphasized by the special role they play
in Mirror Symmetry which associates with each Calabi--Yau variety $X$ its mirror partner $Y$, 
such that the Hodge numbers of $X$ and $Y$ are related by $h^{p,q}(Y) = h^{q,n-p}(X)$, where 
$n = \dim X = \dim Y$. However, this relation shows that by considering only usual Calabi--Yau varieties 
we are missing some mirror partners. Indeed, if $X$ is a rigid Calabi--Yau variety then $h^{n-1,1}(X) = 0$ 
and so one expects to have $h^{1,1}(Y) = 0$ for the mirror partner $Y$ of $X$. Thus $Y$ cannot be projective. 

It is expected, however, that Mirror Symmetry extends to rigid Calabi--Yau varieties, but their mirror
partners are non-commutative Calabi--Yau varieties. In other words, instead of an algebraic variety $Y$
one expects to associate with $X$ a certain triangulated category $\CT$ (thought of as the derived category
of coherent sheaves on a ``noncommutative variety $Y$''). 

To express the Calabi--Yau property of $Y$ in terms of $\CT$ it is natural to use the Serre functor $\SSS_\CT$.
The Serre functor is one of the most important invariants of a triangulated category (see Section~\ref{subsection-serre}),
which for derived categories of coherent sheaves on smooth projective varieties is the composition of the twist by the canonical class 
and the shift by the dimension. Thus derived categories of Calabi--Yau varieties are characterized by the fact that their Serre functor
is just a shift. This motivates the following

\begin{definition}
A triangulated category $\CT$ is an {\sf $n$-Calabi--Yau category} if it has a Serre functor $\SSS_\CT$
and, moreover, $\SSS_\CT \cong [n]$ for some $n \in \ZZ$.
The integer $n$ is called the {\sf CY-dimension of $\CT$}.
\end{definition}

It is also natural to consider the following weakening of the Calabi--Yau property.

\begin{definition}
A triangulated category $\CT$ is a {\sf fractional Calabi--Yau category} if it has a Serre functor $\SSS_\CT$
and there are integers $p$ and $q \ne 0$ such that $\SSS_\CT^q \cong [p]$.
\end{definition}

The goal of this paper is to show that there are many examples of Fano varieties which have a semiorthogonal decomposition
with one of the components being a fractional Calabi--Yau category. The presence of a Calabi--Yau component usually has a strong influence
on the geometrical properties of the Fano variety, which acquires some properties specific to Calabi--Yau varieties
(this was discussed from the Hodge-theoretic point of view in~\cite{iliev2011fano}). For example, if a variety $X$
has a semiorthogonal component which is 2-Calabi--Yau category then any moduli space of coherent sheaves on $X$
carries a closed 2-form, and some of them provide interesting examples of hyper-k\"ahler varieties. 
This makes it interesting to find some general construction of Calabi--Yau categories of geometric origin.

The main result of this paper is such a construction. We start with a smooth projective variety $M$
with a rectangular Lefschetz decomposition (see Section~\ref{subsection-sod} for a definition and
Section~\ref{subsection-lefschetz} for examples of such varieties, the simplest example to have in mind
is the projective space $\PP^n$, or the Grassmannian $\Gr(k,n)$ with coprime $k$ and $n$).
Further, consider a spherical functor $\Phi:\BD(X) \to \BD(M)$ between the bounded derived categories
of coherent sheaves of another smooth projective variety $X$ and $M$ (see Section~\ref{subsection-spherical}
for a definition and Section~\ref{sec-setup} for some examples, again the simplest example is the derived
pushforward for a divisorial embedding $X \hookrightarrow M$).
Assuming some compatibility between the Lefschetz
decomposition of $\BD(M)$ and the functor $\Phi$ we prove that $\BD(X)$ has a semiorthogonal decomposition, 
such that an appropriate power of the Serre functor of one of the components of this decomposition is
isomorphic to a shift. The construction is explained in detail in Section~\ref{section-construction}
after a preparatory Section~\ref{section-preliminaries}.

In Section~\ref{section-examples} we list some known varieties with a rectangular
Lefschetz decomposition and some Calabi--Yau categories arising from these. We pay special
attention to K3 and 3-Calabi--Yau categories coming from these examples.

Finally, in Section~\ref{section-properties} we discuss some general properties of Calabi--Yau categories.
We show that a connected Calabi--Yau category is indecomposable, and prove an inequality between
the CY-dimension of a Calabi--Yau component of the derived category of a smooth projective variety
and the dimension of the variety itself. We also discuss some interesting questions and conjectures 
related to Calabi--Yau categories.

I would like to thank Alex Perry for suggestion to consider Example~\ref{example-root} of a spherical functor
and for many valuable comments on the first draft of the paper.

\section{Preliminaries}\label{section-preliminaries}

\subsection{Notations and conventions}

All varieties considered in this paper are assumed to be smooth and projective over a field $\kk$.
In the examples related to Grassmannians the field is assumed to be of zero (or sufficiently big positive)
characteristic. For a variety $X$ we denote by $\BD(X)$ the bounded derived category of coherent sheaves on $X$.
All the pushforward, pullback, and tensor product functors are derived. All functors between triangulated
categories are assumed to be triangulated. For a functor $\Phi:\CT_1 \to \CT_2$ between triangulated categories 
$\CT_1$ and $\CT_2$ we denote by $\Phi^*$ its left adjoint and by $\Phi^!$ its right adjoint (if they exist).
We denote the units and the counits of the adjunctions by
\begin{equation*}
\eta_{\Phi,\Phi^*}:\id \to \Phi\circ\Phi^*
\qquad\text{and}\qquad
\epsi_{\Phi^*,\Phi}:\Phi^*\circ\Phi \to \id,
\end{equation*}
and if there is no risk of confusion we omit the lower indices. Recall that the compositions
\begin{equation*}
\Phi \xrightarrow{\ \eta_{\Phi,\Phi^*} \circ \Phi \ } \Phi \circ \Phi^* \circ \Phi \xrightarrow{\ \Phi \circ \epsi_{\Phi^*,\Phi}\ } \Phi
\qquad\text{and}\qquad
\Phi^* \xrightarrow{\ \Phi^* \circ \eta_{\Phi,\Phi^*} \ } \Phi^* \circ \Phi \circ \Phi^* \xrightarrow{\ \epsi_{\Phi^*,\Phi} \circ \Phi^*\ } \Phi^*
\end{equation*}
are identity morphisms (in fact, this is one of the equivalent definitions of adjunction).

Given an object $\CE \in \BD(X\times Y)$ we can consider a functor
\begin{equation*}
\BD(X) \to \BD(Y),
\qquad
F \mapsto p_{Y*}(\CE \otimes p_X^*(F)),
\end{equation*}
where $p_X$ and $p_Y$ are the projections of $X\times Y$ to $X$ and $Y$ respectively. It is called the
{\sf Fourier--Mukai functor with kernel $\CE$}. A morphism of kernels induces a morphism
of the corresponding Fourier--Mukai functors. Furthermore there is an operation of {\sf convolution} of kernels,
which corresponds to composition of functors. Finally, any Fourier--Mukai functor
has both adjoints which are also Fourier--Mukai functors, and moreover, the unit and the counit 
of the adjunctions are induced by morphisms of kernels \cite{anno2012adjunctions}.

In what follows, to unburden notation we will identify Fourier--Mukai functors with their kernels,
and we will consider only those morphisms of functors which are induced by morphims of kernels.
In particular, by a distinguished triangle of (Fourier--Mukai) functors we understand a distinguished 
triangle of kernels.
Furthermore, any object of $\BD(X)$ will be identified with the derived tensor product functor $\CF \otimes -$,
i.e.\ with Fourier--Mukai functor whose kernel is the pushforward of $\CF$ to $X \times X$ under the diagonal embedding.
Thus given a line bundle $\CL_X$ on $X$ the same notation will be used for the tensor product $\CL_X \otimes - $ functor.
Similarly, given an automorphism $\tau$ of $X$ we will write $\tau$ also for the autoequivalence of $\BD(X)$ it induces.

\subsection{Semiorthogonal decompositions and mutation functors}\label{subsection-sod}

For a review of semiorthogonal decompositions and their uses one can look into~\cite{kuznetsov2014semiorthogonal}.

\begin{definition}
A {\sf semiorthogonal decomposition} of a triangulated category $\CT$ is a collection $\CA_1,\dots,\CA_m$
of full triangulated subcategories in $\CT$ such that
\begin{itemize}
\item 
for all $i > j$ we have $\Hom(\CA_i,\CA_j) = 0$;
\item 
for any object $T \in \CT$ there is a filtration, i.e., a chain of morphisms 
\begin{equation*}
0 = T_m \to T_{m-1} \to \dots \to T_1 \to T_0 = T
\end{equation*}
such that $\Cone(T_i \to T_{i-1}) \in \CA_i$.
\end{itemize}
A semiorthogonal decomposition is denoted by $\CT = \langle \CA_1, \dots, \CA_m \rangle$.
\end{definition}

The filtration
in the second part of the definition is canonical and functorial. Moreover, if $\CT = \BD(X)$
is the derived category of a smooth projective variety the fitration of every object is induced by a filtration of the structure sheaf of the diagonal
in the following sense.

\begin{lemma}[\cite{kuznetsov2011base}]\label{lemma-kernels-projectors}
If\/ $\BD(X) = \langle \CA_1,\dots,\CA_m \rangle$ is a semiorthogonal decomposition then there is a chain of morphisms in $\BD(X\times X)$
\begin{equation*}
0 = \Delta_m \to \Delta_{m-1} \to \dots \to \Delta_1 \to \Delta_0 = \Delta_*\CO_X
\end{equation*}
such that for any $T \in \BD(X)$ one has $T_i = \Delta_i(T)$, where each $\Delta_i$ is considered as a Fourier--Mukai functor $\Delta_i:\BD(X) \to \BD(X)$.
In particular, the projection functors onto components of a semiorthogonal decomposition are Fourier--Mukai functors.
\end{lemma}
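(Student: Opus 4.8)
The plan is to produce the chain $(\Delta_i)$ as the canonical filtration of the kernel $\Delta_*\CO_X$ of the identity functor, taken inside $\BD(X\times X)$ with respect to a semiorthogonal decomposition induced from the given decomposition of the second factor. Write $p_1,p_2\colon X\times X\to X$ for the two projections, and for each $i$ let $\CB_i\subseteq\BD(X\times X)$ be the smallest thick triangulated subcategory containing all objects $p_1^*F\otimes p_2^*A$ with $F\in\BD(X)$ and $A\in\CA_i$. The first step is to check that
\[
\BD(X\times X)=\langle\CB_1,\dots,\CB_m\rangle
\]
is a semiorthogonal decomposition (this is the only step with real content, and is discussed below). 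Granting it, one applies the canonical functorial filtration from the definition to the object $\Delta_*\CO_X$; this produces a chain $0=\Delta_m\to\Delta_{m-1}\to\dots\to\Delta_0=\Delta_*\CO_X$ in $\BD(X\times X)$ with $\Cone(\Delta_i\to\Delta_{i-1})\in\CB_i$, and these are the kernels asserted in the statement.

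Next I would verify that applying these kernels as Fourier--Mukai functors to an arbitrary $T\in\BD(X)$ reproduces its filtration. Apply the exact functor $\CE\mapsto p_{2*}(\CE\otimes p_1^*T)$ from $\BD(X\times X)$ to $\BD(X)$ to the chain above; since $\Delta_*\CO_X$ goes to $T$, one obtains $0=\Delta_m(T)\to\dots\to\Delta_0(T)=T$. For a generating object $p_1^*F\otimes p_2^*A$ of $\CB_i$, the projection formula and flat base change give
\[
p_{2*}\bigl(p_1^*F\otimes p_2^*A\otimes p_1^*T\bigr)\ \cong\ \RGamma(X,F\otimes T)\otimes_\kk A\ \in\ \CA_i,
\]
and, the functor being exact and $\CA_i$ thick, its restriction to $\CB_i$ lands in $\CA_i$. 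Hence $\Cone(\Delta_i(T)\to\Delta_{i-1}(T))\in\CA_i$, so $(\Delta_i(T))_i$ is a filtration of $T$ compatible with $\langle\CA_1,\dots,\CA_m\rangle$; by uniqueness of such filtrations it coincides with the canonical one, that is, $T_i=\Delta_i(T)$. In particular the projection functor onto $\CA_i$, sending $T$ to $\Cone(T_i\to T_{i-1})$, is the Fourier--Mukai functor with kernel $\Cone(\Delta_i\to\Delta_{i-1})$, which gives the last assertion.

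It remains to justify that $\langle\CB_1,\dots,\CB_m\rangle$ is a semiorthogonal decomposition; this is the main obstacle. For semiorthogonality, the Künneth isomorphism
\[
\RHom_{X\times X}\bigl(p_1^*F\otimes p_2^*A,\ p_1^*F'\otimes p_2^*A'\bigr)\ \cong\ \RHom_X(F,F')\otimes_\kk\RHom_X(A,A')
\]
shows the left side vanishes whenever $i>j$, since then $\RHom_X(A,A')=0$ by semiorthogonality of $\langle\CA_1,\dots,\CA_m\rangle$; as vanishing of $\RHom$ in either variable defines a thick subcategory, this propagates from the generators of $\CB_i$, $\CB_j$ to the whole subcategories. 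For generation, recall that $\BD(X\times X)$ is classically generated by external products $p_1^*F\otimes p_2^*G$; decomposing the second factor $G$ with respect to $\langle\CA_1,\dots,\CA_m\rangle$ and applying the exact functor $p_1^*F\otimes p_2^*(-)$ exhibits such an external product as an iterated extension of objects of the $\CB_i$, so the $\CB_i$ generate $\BD(X\times X)$. Semiorthogonality together with generation yields the desired semiorthogonal decomposition by the standard argument (cf.\ \cite{kuznetsov2014semiorthogonal}); the points to be careful about are the correct ordering of the $\CB_i$ and the (standard) fact that external products classically generate the derived category of the product of two smooth projective varieties.
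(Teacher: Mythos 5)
The paper does not actually prove this lemma --- it is quoted from \cite{kuznetsov2011base} --- and your argument is essentially the proof from that reference specialized to the absolute smooth projective case: install the box-product semiorthogonal decomposition $\BD(X\times X)=\langle\CB_1,\dots,\CB_m\rangle$ with the $\CA_i$ placed in the second factor, filter $\Delta_*\CO_X$, and identify the resulting Fourier--Mukai transforms of the filtration with the canonical filtration of $T$ via the K\"unneth computation; all of this is correct, including the choice of factor and ordering. The two points you leave as standard (classical generation of $\BD(X\times X)$ by external products, and that semiorthogonal thick subcategories which classically generate yield a semiorthogonal decomposition, which uses idempotent completeness of the bounded derived category) are indeed standard, and they are precisely where \cite{kuznetsov2011base} has to work harder because it treats the relative, base-change setting rather than the absolute case needed here.
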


If $X$ is a smooth projective variety and $\BD(X) = \langle \CA_1, \dots, \CA_m \rangle$ is a semiorthogonal decomposition 
then each component $\CA_i \subset \BD(X)$ is {\sf admissible} (see~\cite{bondal-kapranov}). This means that its embedding functor $\alpha_i:\CA_i \to \BD(X)$
has both left and right adjoints $\alpha_i^*:\BD(X) \to \CA_i$ and $\alpha_i^!:\BD(X) \to \CA_i$ (note that by full faithfulness
of $\alpha_i$ it follows that $\alpha_i^*\alpha_i = \alpha_i^!\alpha_i = \id_{\CA_i}$).

Vice versa, any semiorthogonal collection $\CA_1,\dots,\CA_m$ of admissible triangulated subcategories in a triangulated category $\CT$
extends to a semiorthogonal decomposition $\CT = \langle \CA, \CA_1, \dots, \CA_m \rangle$ with an additional component
$\CA$ defined as the orthogonal
\begin{equation*}
\CA = \langle \CA_1, \dots, \CA_m \rangle^\perp := \{ T \in \CT \mid \Hom(\CA_i,T) = 0 \}.
\end{equation*}
Actually, instead of adding the component $\CA$ on the left of the collection, we could extend the collection
to a semiorthogonal decomposition by choosing any $1 \le i \le m$ and inserting an appropriate intersection
of orthogonals between $\CA_i$ and $\CA_{i+1}$.
In particular, if $\CB \subset \CT$ is an admissible subcategory then it extends in two ways
to a semiorthogonal decomposition
\begin{equation*}
\CT = \langle \CB^\perp , \CB \rangle,
\qquad
\CT = \langle \CB, {}^\perp\CB \rangle,
\end{equation*}
The additional components $\CB^\perp$ and ${}^\perp\CB$ of $\CT$ are abstractly equivalent but embedded into $\CT$ differently.
An equivalence between these subcategories is given by {\sf mutation functors}.

The {\sf left mutation functor} through $\CB$ is denoted $\LL_\CB$ and is defined by the canonical functorial distinguished triangle
\begin{equation}\label{def-left-mut}
\beta\beta^! \xrightarrow{\ \epsi\ } \id \to \LL_\CB,
\end{equation}
where $\beta:\CB \to \CT$ is the embedding functor.
Analogously, the {\sf right mutation functor} through $\CB$ is denoted~$\RR_\CB$ and is defined by the canonical functorial distinguished triangle
\begin{equation}\label{def-right-mut}
\RR_\CB \to \id \xrightarrow{\ \eta\ } \beta\beta^*.
\end{equation}
The following two results about mutations are straightforward, but quite useful.

\begin{lemma}\label{lemma-mutations-basics}
$(i)$ If $\CB = \langle \CB_1, \dots, \CB_k \rangle$ is a semiorthogonal decomposition of an admissible subcategory $\CB \subset \CT$ then
\begin{equation*}
\LL_{\CB} = \LL_{\CB_1} \circ \dots \circ \LL_{\CB_k}
\qquad\text{and}\qquad
\RR_{\CB} = \RR_{\CB_k} \circ \dots \circ \RR_{\CB_1}.
\end{equation*}

$(ii)$ If $\xi:\CT \to \CT$ is an autoequivalence then
\begin{equation*}
\xi \circ \LL_\CB \circ \xi^{-1} = \LL_{\xi(\CB)}
\qquad\text{and}\qquad
\xi \circ \RR_\CB \circ \xi^{-1} = \RR_{\xi(\CB)}.
\end{equation*}
\end{lemma}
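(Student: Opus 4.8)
For part $(i)$, the plan is to proceed by induction on $k$, so it suffices to treat the case $k = 2$: given $\CB = \langle \CB_1, \CB_2 \rangle$, one wants $\LL_\CB = \LL_{\CB_1}\circ\LL_{\CB_2}$ and dually $\RR_\CB = \RR_{\CB_2}\circ\RR_{\CB_1}$. I would work entirely with the defining triangle~\eqref{def-left-mut}. First I would observe that $\LL_{\CB_1}\LL_{\CB_2}$ kills $\CB$: on $\CB_2$ the functor $\LL_{\CB_2}$ vanishes, while on $\CB_1$ we have $\LL_{\CB_2}|_{\CB_1} = \id$ (since $\Hom(\CB_2,\CB_1)=0$ forces $\beta_2\beta_2^!$ to vanish on $\CB_1$), so $\LL_{\CB_1}\LL_{\CB_2}|_{\CB_1} = \LL_{\CB_1}|_{\CB_1} = 0$. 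Conversely, for any $T \in \CT$ the cone of $T \to \LL_{\CB_1}\LL_{\CB_2} T$ lies in $\CB$: the triangle for $\LL_{\CB_2}$ gives $\Cone(T \to \LL_{\CB_2}T) \in \CB_2$, and applying $\LL_{\CB_1}$ together with the triangle for $\LL_{\CB_1}$, the octahedron shows $\Cone(T \to \LL_{\CB_1}\LL_{\CB_2}T)$ is an extension of something in $\CB_1$ by something in $\CB_2$, hence lies in $\langle\CB_1,\CB_2\rangle = \CB$. Since an object of $\CT$ is characterized up to canonical isomorphism by being the target of a morphism from $T$ with cone in $\CB$ and with the target lying in $\CB^\perp$ (and $\LL_{\CB_1}\LL_{\CB_2}T$ does lie in $\CB^\perp = \CB_1^\perp \cap \CB_2^\perp$ by the first point applied with $\CB$ replaced by each $\CB_j$), the two functors agree; one should check the isomorphism is functorial, which follows because everything in sight is built from functorial triangles. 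The statement for $\RR_\CB$ is obtained either by the dual argument or by passing to opposite categories.

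For part $(ii)$, I would again argue directly from~\eqref{def-left-mut}. If $\beta:\CB\hookrightarrow\CT$ is the embedding and $\xi$ is an autoequivalence, then $\xi\circ\beta:\CB\to\CT$ identifies $\CB$ with the subcategory $\xi(\CB)$, and the embedding of $\xi(\CB)$ is $\xi\beta$ (up to the equivalence $\CB \simeq \xi(\CB)$ induced by $\xi$). The right adjoint of $\xi\beta$ is $\beta^!\xi^{-1}$, since $\xi^{-1}$ is right (indeed two-sided) adjoint to $\xi$. Hence the defining triangle for $\LL_{\xi(\CB)}$ reads
\begin{equation*}
(\xi\beta)(\beta^!\xi^{-1}) \xrightarrow{\ \epsi\ } \id \to \LL_{\xi(\CB)}.
\end{equation*}
On the other hand, applying the autoequivalence $\xi$ to the triangle~\eqref{def-left-mut} and then precomposing with $\xi^{-1}$ yields a triangle
\begin{equation*}
\xi\beta\beta^!\xi^{-1} \to \id \to \xi\circ\LL_\CB\circ\xi^{-1},
\end{equation*}
where I use that $\xi(\id) \circ \xi^{-1} = \id$ and that $\xi$ takes distinguished triangles to distinguished triangles. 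One checks the left-hand maps agree (the counit of the adjunction $(\xi\beta,\beta^!\xi^{-1})$ is obtained from the counit $\epsi$ of $(\beta,\beta^!)$ by conjugating with $\xi$, which is a formal consequence of how adjunctions compose), so the two triangles have isomorphic first and second terms via compatible maps, forcing an isomorphism of third terms $\LL_{\xi(\CB)} \cong \xi\circ\LL_\CB\circ\xi^{-1}$. The assertion for $\RR_\CB$ is identical using~\eqref{def-right-mut} and the unit instead of the counit.

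The only genuinely delicate point is bookkeeping the naturality of the isomorphisms — i.e. checking that the comparison maps between the defining triangles are morphisms of triangles so that the third vertices are canonically (not just abstractly) identified — but this is routine because mutation functors are defined by \emph{functorial} triangles and adjunction (co)units behave functorially under composition with equivalences; in the Fourier--Mukai setting of this paper it can alternatively be verified at the level of kernels using Lemma~\ref{lemma-kernels-projectors} and the fact, recalled in Section~\ref{subsection-sod}, that units and counits are induced by morphisms of kernels. I expect part $(i)$ to require marginally more care than $(ii)$, since it involves an octahedron to control the cone, whereas $(ii)$ is pure transport of structure along an autoequivalence.
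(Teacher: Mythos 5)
The paper itself gives no proof of this lemma (it is labelled ``straightforward''), and your argument is the standard one and is essentially correct: part $(i)$ reduces to $k=2$ and uses the uniqueness of the decomposition triangle for $\CT=\langle\CB^\perp,\CB\rangle$, part $(ii)$ is transport of structure along $\xi$ using that the adjoint of $\xi\beta$ is $\beta^!\xi^{-1}$ and that (co)units conjugate accordingly. The one imprecise point is your parenthetical justification that $\LL_{\CB_1}\LL_{\CB_2}T\in\CB_2^\perp$: this is not a consequence of the vanishing of $\LL_{\CB_1}\LL_{\CB_2}$ on $\CB$, but of the fact that $\LL_{\CB_1}$ preserves $\CB_2^\perp$, which needs the semiorthogonality $\Hom(\CB_2,\CB_1)=0$ --- apply $\Hom(\CB_2,-)$ to the triangle $\beta_1\beta_1^!Y\to Y\to\LL_{\CB_1}Y$ with $Y=\LL_{\CB_2}T\in\CB_2^\perp$ and $\beta_1\beta_1^!Y\in\CB_1$; with that said, the rest of the argument goes through as you describe.
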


Assume $M$ is a smooth projective variety and $\CL_M$ is a line bundle on $M$. 
A {\sf Lefschetz decomposition} of $\BD(M)$
is a semiorthogonal decomposition in which each component is embedded into the $\CL_M$ twist of the previous component.
The formal definition is:

\begin{definition}\label{definition:ld}
A {\sf Lefschetz decomposition} of $\BD(M)$
is a semiorthogonal decomposition of the form
\begin{equation*}
\BD(M) = \langle \CB_0 ,\CB_1\otimes\CL_M, \dots, \CB_{m-1} \otimes \CL_M^{m-1} \rangle,
\qquad\text{where $\CB_0 \supset \CB_1 \supset \dots \supset \CB_{m-1}$.}
\end{equation*}
A Lefschetz decomposition is {\sf rectangular} if $\CB_0 = \CB_1 = \dots = \CB_{m-1}$.
\end{definition}

\subsection{Serre functor}\label{subsection-serre}

One of the main characteristics of a triangulated category is its Serre functor.

\begin{definition}[\cite{bondal-kapranov}]
Let $\CT$ be a triangulated category. A {\sf Serre functor} in $\CT$ is an autoequivalence $\SSS_\CT:\CT \to \CT$
with a bifunctorial isomorphism
\begin{equation*}
\Hom(F,G)^\vee \cong \Hom(G,\SSS_\CT(F))
\end{equation*}
for all $F,G \in \CT$.
\end{definition}

If a Serre functor exists then it is unique up to a canonical isomorphism.
If $\CT = \BD(X)$ is the bounded derived category of a smooth projective variety $X$ then
\begin{equation*}
\SSS_X(F) := F \otimes \omega_X[\dim X]
\end{equation*}
is a Serre functor for $\BD(X)$. 

The following properties of Serre functors are quite useful.

\begin{lemma}\label{lemma-serre-basics}
$(i)$ Let $\CT_1$ and $\CT_2$ be triangulated categories with Serre functors $\SSS_{\CT_1}$ and $\SSS_{\CT_2}$ respectively.
If $\Phi:\CT_1 \to \CT_2$ is a functor then its left adjoint $\Phi^*$ exists if and only if its right adjoint $\Phi^!$ exists and
\begin{equation*}
\Phi^! \circ \SSS_{\CT_2} = \SSS_{\CT_1} \circ \Phi^*.
\end{equation*}

$(ii)$ The Serre functor of a triangulated category $\CT$ commutes with all its autoequivalences.
\end{lemma}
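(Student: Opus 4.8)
The plan is to derive everything from Serre duality together with the Yoneda lemma, proving $(i)$ first and then deducing $(ii)$ as the special case $\CT_1 = \CT_2 = \CT$, $\Phi = \tau$.

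For $(i)$, assume $\Phi^*$ exists. For $F \in \CT_2$ and $G \in \CT_1$ I would chain the natural isomorphisms
\[
\Hom(\Phi(G), \SSS_{\CT_2}(F)) \cong \Hom(F, \Phi(G))^\vee \cong \Hom(\Phi^*(F), G)^\vee \cong \Hom(G, \SSS_{\CT_1}(\Phi^*(F))),
\]
where the outer two are Serre duality in $\CT_2$ and $\CT_1$ and the middle one is the $(\Phi^*, \Phi)$-adjunction; all three are bifunctorial in $F$ and $G$. Substituting $F = \SSS_{\CT_2}^{-1}(H)$ (legitimate since a Serre functor is an autoequivalence) yields a bifunctorial isomorphism $\Hom(\Phi(G), H) \cong \Hom(G, (\SSS_{\CT_1}\circ\Phi^*\circ\SSS_{\CT_2}^{-1})(H))$, and by Yoneda this exhibits $\SSS_{\CT_1}\circ\Phi^*\circ\SSS_{\CT_2}^{-1}$ as a right adjoint of $\Phi$. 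Hence $\Phi^!$ exists and equals that composite, which is exactly the claimed identity $\Phi^!\circ\SSS_{\CT_2} = \SSS_{\CT_1}\circ\Phi^*$. The converse is the mirror argument: if $\Phi^!$ exists, the same three-term chain (now with the $(\Phi,\Phi^!)$-adjunction in the middle, and after substitution an application of $(-)^{\vee\vee}\cong\id$ on finite-dimensional Hom spaces) produces $\SSS_{\CT_1}^{-1}\circ\Phi^!\circ\SSS_{\CT_2}$ as a left adjoint of $\Phi$.

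For $(ii)$, let $\tau:\CT\to\CT$ be an autoequivalence. An equivalence is fully faithful and its quasi-inverse $\tau^{-1}$ is simultaneously a left and a right adjoint, so applying $(i)$ with $\CT_1=\CT_2=\CT$, $\Phi=\tau$, $\Phi^*=\Phi^!=\tau^{-1}$ gives $\tau^{-1}\circ\SSS_\CT = \SSS_\CT\circ\tau^{-1}$, equivalently $\tau\circ\SSS_\CT\cong\SSS_\CT\circ\tau$. (Alternatively, and independently of $(i)$: $\tau^{-1}\circ\SSS_\CT\circ\tau$ is a Serre functor, because $\Hom(F,G)^\vee\cong\Hom(\tau F,\tau G)^\vee\cong\Hom(\tau G,\SSS_\CT\tau F)\cong\Hom(G,\tau^{-1}\SSS_\CT\tau F)$ bifunctorially, and one then invokes uniqueness of the Serre functor.)

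The only real subtlety — and the step I expect a careful reader to scrutinize — is checking that every isomorphism in the chain is natural in both variables, so that the Yoneda lemma genuinely produces an adjoint functor and not merely an object-wise isomorphism; this is immediate since Serre duality is bifunctorial by definition and adjunction isomorphisms are natural, but it is worth stating explicitly. Beyond this, and the harmless use of finite-dimensionality of Hom spaces for the double-dual identification in the converse of $(i)$, no input from the triangulated structure is needed.
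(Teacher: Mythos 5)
Your proof is correct, and it is the standard Bondal--Kapranov argument; the paper itself states this lemma without proof (it is quoted as a known fact), so there is nothing to diverge from. The only cosmetic remark is that, since all functors in the paper are assumed triangulated, one could add the standard observation that an adjoint of an exact functor between triangulated categories is automatically exact, but this does not affect the validity of your argument.
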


Another useful feature is a relation of the Serre functor of a triangulated category with Serre functors of components of its semiorthogonal decomposition.

\begin{lemma}\label{lemma-serre-mutation}
Let $\CT = \langle \CA, \CB \rangle$ be a semiorthogonal decomposition with admissible $\CA$ and $\CB$, 
and assume that a Serre functor of~$\CT$ exists. Then Serre functors of $\CA$ and $\CB$ exist and
\begin{equation*}
\SSS_{\CB} = \RR_\CA \circ \SSS_\CT,
\qquad\text{and}\qquad
\SSS_{\CA}^{-1} = \LL_{\CB} \circ \SSS_\CT^{-1}.
\end{equation*}
\end{lemma}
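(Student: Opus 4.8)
The plan is to first establish that the Serre functors of $\CA$ and $\CB$ exist by identifying these subcategories with left, respectively right, orthogonals and using admissibility; then to construct the bifunctorial isomorphism defining the Serre functor directly from the adjunction data. Concretely, since $\CB$ is admissible we have the semiorthogonal decomposition $\CT = \langle \CB, {}^\perp\CB \rangle$, and similarly $\CT = \langle \CA^\perp, \CA \rangle$. The embedding $\beta : \CB \to \CT$ has both adjoints $\beta^*, \beta^!$, and $\beta^*\beta = \beta^!\beta = \id_\CB$. The natural candidate is $\SSS_\CB := \beta^* \circ \SSS_\CT \circ \beta$ and $\SSS_\CB^{-1} := \beta^! \circ \SSS_\CT^{-1} \circ \beta$, and dually for $\CA$ with $\alpha : \CA \to \CT$, using $\SSS_\CA^{-1} := \alpha^* \circ \SSS_\CT^{-1} \circ \alpha$.

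The first main step is to verify the defining bifunctorial isomorphism. For $F, G \in \CB$ I would compute, using that $\beta$ is fully faithful and then the Serre duality in $\CT$,
\begin{equation*}
\Hom_\CB(F,G)^\vee \cong \Hom_\CT(\beta F, \beta G)^\vee \cong \Hom_\CT(\beta G, \SSS_\CT \beta F).
\end{equation*}
Now I need to rewrite the right-hand side as $\Hom_\CB(G, \SSS_\CB F)$ for an autoequivalence $\SSS_\CB$ of $\CB$. Since $\beta G \in \CB$, the adjunction $(\beta^*, \beta)$ gives $\Hom_\CT(\beta G, \SSS_\CT \beta F) \cong \Hom_\CB(G, \beta^* \SSS_\CT \beta F)$ — wait: that is the wrong adjunction, as $\beta^*$ is \emph{left} adjoint to $\beta$. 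Instead one uses $\beta^! $: but $\beta G$ is in the image of $\beta$, and one has $\Hom_\CT(\beta G, H) \cong \Hom_\CB(G, \beta^! H)$ for all $H \in \CT$. Hence the expression equals $\Hom_\CB(G, \beta^! \SSS_\CT \beta F)$, so $\SSS_\CB = \beta^! \circ \SSS_\CT \circ \beta$. That this is an autoequivalence follows because on $\CB$ it has inverse $\beta^* \circ \SSS_\CT^{-1} \circ \beta$ (check on Hom-spaces by the symmetric computation, or invoke Lemma~\ref{lemma-serre-basics}(i) applied to $\Phi = \beta$: $\beta^! = \SSS_\CB \circ \beta^* \circ \SSS_\CT^{-1}$, which rearranges to the claim).

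The second main step is to identify $\beta^! \circ \SSS_\CT \circ \beta$ with $\RR_\CA \circ \SSS_\CT$ restricted to $\CB$. Here the key observation is that $\SSS_\CT$ sends $\CB = {}^\perp\CA \cdot$ — more precisely, in the decomposition $\CT = \langle \CA, \CB\rangle$ we have $\CB = {}^\perp\CA$ inside $\langle \CA,\CB\rangle$, and the right mutation $\RR_\CA$ restricted to the subcategory $\CA^\perp$ (which contains $\SSS_\CT(\CB)$? — this needs care) is an equivalence onto $\CB$. The clean way: by definition~\eqref{def-right-mut}, $\RR_\CA$ fits in $\RR_\CA \to \id \to \alpha\alpha^*$, so for any $H \in \CT$, $\RR_\CA H \in {}^\perp\CA = \CB$ and $\RR_\CA H \to H$ has cone in $\CA$; thus $\RR_\CA H$ is precisely the $\CB$-component of $H$, i.e. $\RR_\CA = \beta \circ \beta^!$ composed appropriately — indeed $\beta^! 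H$ is by definition the object of $\CB$ with $\beta\beta^! H \to H$ having cone in $\CB^\perp \cap \langle\CA,\CB\rangle = \CA$... I would instead argue: $\RR_\CA$ and $\beta\beta^!$ satisfy the same universal property (cone in $\CA$, source in $\CB$), hence $\RR_\CA \cong \beta \beta^!$ as endofunctors of $\CT$, and since $\beta^*\beta = \id$, $\beta^* \RR_\CA = \beta^!$. Therefore $\SSS_\CB = \beta^! \SSS_\CT \beta = \beta^* \RR_\CA \SSS_\CT \beta$; interpreting $\SSS_\CB$ as an endofunctor of $\CB$ and dropping the harmless $\beta^*\,\cdot\,\beta$ (which is the identity once we know the composite lands in $\CB$, as it does since $\RR_\CA$ has image in $\CB$), we obtain $\SSS_\CB = \RR_\CA \circ \SSS_\CT$. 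The formula $\SSS_\CA^{-1} = \LL_\CB \circ \SSS_\CT^{-1}$ is proved the same way, dualizing: use the decomposition $\CT = \langle \CA^\perp, \CA\rangle$, $\LL_\CB \cong \text{(}\CA\text{-component functor)}$, and the identity $\SSS_\CA^{-1} = \alpha^* \circ \SSS_\CT^{-1} \circ \alpha$ from step one applied to $\CA$.

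I expect the main obstacle to be bookkeeping: keeping straight which orthogonal ($\CB$ versus ${}^\perp\CB$, $\CA^\perp$ versus $\CA$) each mutation lands in, and which of $\beta^*, \beta^!$ is needed at each adjunction step, since a sign error there inverts the whole statement. The actual homological input — Serre duality in $\CT$, full faithfulness of the embeddings, and the defining triangles~\eqref{def-left-mut}--\eqref{def-right-mut} of the mutation functors — is entirely formal and routine; the content is purely in matching the universal property of $\beta\beta^!$ (resp. $\alpha\alpha^*$) with that of $\RR_\CA$ (resp. $\LL_\CB$), which is immediate from the two-term semiorthogonal decomposition.
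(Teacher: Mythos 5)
Your argument is correct: the identifications $\RR_\CA \cong \beta\beta^!$ and $\LL_\CB \cong \alpha\alpha^*$ (from the decomposition triangle $\beta\beta^! \to \id \to \alpha\alpha^*$ of $\CT = \langle \CA,\CB\rangle$), together with the adjunction computations giving $\SSS_\CB = \beta^!\circ\SSS_\CT\circ\beta$ and $\SSS_\CA^{-1} = \alpha^*\circ\SSS_\CT^{-1}\circ\alpha$, do yield both formulas, and the inverse $\beta^*\circ\SSS_\CT^{-1}\circ\beta$ is verified by the symmetric Hom-space computation as you indicate. The paper states this lemma without proof (it is the standard fact going back to Bondal--Kapranov), so there is no argument to compare against; yours is the usual one.
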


The following compatibility with rectangular Lefschetz decompositions will be useful later.

\begin{lemma}\label{lemma:serre-lefschetz}
Let $M$ be a smooth projective variety and $\BD(M) = \langle \CB, \CB \otimes \CL_M, \dots, \CB \otimes \CL_M^{m-1} \rangle$ a rectangular Lefschetz decomposition. 
Then for each $i \in \ZZ$ one has $\SSS_M(\CB \otimes \CL_M^i) = \CB \otimes \CL_M^{i-m}$.
\end{lemma}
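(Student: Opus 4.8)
The plan is to reduce to the case $i = 0$ and then to compare the given rectangular Lefschetz decomposition with its twist by $\CL_M$, which overlap in one large admissible subcategory.

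First I would record that $\SSS_M = - \otimes \omega_M[\dim M]$ commutes with the autoequivalence $- \otimes \CL_M$ of $\BD(M)$ (this is Lemma~\ref{lemma-serre-basics}(ii), or simply the observation that tensoring by $\omega_M$ and shifting both commute with $-\otimes\CL_M$). Hence $\SSS_M(\CB \otimes \CL_M^i) = \SSS_M(\CB) \otimes \CL_M^i$ for every $i \in \ZZ$, so it is enough to establish the single identity $\SSS_M(\CB) = \CB \otimes \CL_M^{-m}$, equivalently $\SSS_M(\CB \otimes \CL_M^m) = \CB$.

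Next I would set $\CC := \langle \CB \otimes \CL_M, \CB \otimes \CL_M^2, \dots, \CB \otimes \CL_M^{m-1} \rangle$. This is an admissible subcategory of $\BD(M)$, for instance because it is a component of the semiorthogonal decomposition $\BD(M) = \langle \CB, \CC \rangle$ obtained by grouping the last $m-1$ terms of the given one. Reading off the orthogonality in this decomposition shows $\CB = \CC^\perp$. On the other hand, twisting the given decomposition by $\CL_M$ gives $\BD(M) = \langle \CB \otimes \CL_M, \dots, \CB \otimes \CL_M^m \rangle = \langle \CC, \CB \otimes \CL_M^m \rangle$, and reading off the orthogonality here shows $\CB \otimes \CL_M^m = {}^\perp\CC$. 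The key step is then the general identity ${}^\perp\CC = \SSS_M^{-1}(\CC^\perp)$: for any object $T$ and any $C \in \CC$, Serre duality on $\BD(M)$ gives $\Hom(T, C)^\vee \cong \Hom(C, \SSS_M(T))$, so $T$ is left-orthogonal to $\CC$ if and only if $\SSS_M(T)$ is right-orthogonal to $\CC$. Combining the three identities $\CB = \CC^\perp$, $\CB \otimes \CL_M^m = {}^\perp\CC$, and ${}^\perp\CC = \SSS_M^{-1}(\CC^\perp)$ yields $\CB \otimes \CL_M^m = \SSS_M^{-1}(\CB)$, hence $\SSS_M(\CB \otimes \CL_M^m) = \CB$, and the general case follows from the first paragraph.

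I do not expect a genuine obstacle here: the argument is formal once the right subcategory $\CC$ is isolated. The only thing requiring care is the bookkeeping of the left versus right orthogonals ${}^\perp\CC$ and $\CC^\perp$ and the direction conventions for semiorthogonal decompositions — a mix-up there would turn $\CL_M^{-m}$ into $\CL_M^{m}$ — and the sole nontrivial input is Serre duality on $\BD(M)$, entering through the identity ${}^\perp\CC = \SSS_M^{-1}(\CC^\perp)$.
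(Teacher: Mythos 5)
Your proof is correct and follows essentially the same route as the paper: both identify $\CB\otimes\CL_M^{m}$ (resp.\ $\CB$) as the left (resp.\ right) orthogonal of the block $\langle \CB\otimes\CL_M,\dots,\CB\otimes\CL_M^{m-1}\rangle$ by comparing the decomposition with its $\CL_M$-twist, and then use the Serre-duality identity exchanging left and right orthogonals. The only cosmetic difference is that you reduce to $i=0$ via the commutation of $\SSS_M$ with the twist, whereas the paper twists by $\CL_M^{i-m+1}$ and handles general $i$ directly.
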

\begin{proof}
First, tensoring the decomposition by $\CL_M^{i-m+1}$ we deduce that
\begin{equation*}
\CB \otimes \CL_M^{i} = {}^\perp\langle \CB \otimes \CL_M^{i-m+1}, \dots, \CB \otimes \CL_M^{i-1}\rangle.
\end{equation*}
From the definition of a Serre functor it then follows that
\begin{equation*}
\SSS_M(\CB \otimes \CL_M^{i}) = \langle \CB \otimes \CL_M^{i-m+1}, \dots, \CB \otimes \CL_M^{i-1}\rangle^\perp.
\end{equation*}
Comparing this with the initial decomposition tensored by $\CL_M^{i-m}$, we deduce the required equality.
\end{proof}

\subsection{Hochschild homology and cohomology}\label{subsection-hochschild}

Hochschild homology $\HOH_\bullet(\CT)$ and cohomology $\HOH^\bullet(\CT)$ are important invariants of triangulated categories. 
One of the ways to define them is by choosing an appropriate DG-enhancement for $\CT$ and using Hochschild homology and cohomology 
of DG-categories (see~\cite{keller2006differential}). However, for derived categories of smooth projective varieties and their 
semiorthogonal components one can use Fourier--Mukai kernels as a replacement for an enhancement. For details we refer 
to~\cite{kuznetsov2009hochschild} and here just sketch the main results.

\begin{lemma}[\cite{kuznetsov2009hochschild}]
Let $\CA \subset \BD(X)$ be an admissible subcategory and $P \in \BD(X\times X)$ the Fourier--Mukai kernel of the projection
functor onto $\CA$. Then
\begin{equation*}
\HOH^\bullet(\CA) = \Ext^\bullet(P,P),
\qquad
\HOH_\bullet(\CA) = \Ext^\bullet(P,P \circ \SSS_X).
\end{equation*}
\end{lemma}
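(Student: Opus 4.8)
The plan is to recall the explicit description of the Fourier--Mukai kernel $P$ of the projection functor onto $\CA$, which Lemma~\ref{lemma-kernels-projectors} guarantees to exist, and then to identify $\HOH^\bullet(\CA)$ and $\HOH_\bullet(\CA)$ with the Ext-groups in question by matching this against the DG-categorical definitions. First I would observe that, by the reference \cite{kuznetsov2009hochschild}, the admissible embedding $\CA \hookrightarrow \BD(X)$ induces on $\CA$ a DG-enhancement for which the Hochschild complex can be computed via the kernel $P$: concretely, $P$ is idempotent with respect to convolution, $P \circ P \cong P$, and it represents the identity functor of $\CA$ viewed inside $\BD(X \times X)$. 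The key point is that for such an idempotent kernel the Hochschild cohomology is $\RHom_{X\times X}(P, P)$ and the Hochschild homology is $\RHom_{X\times X}(P, P')$, where $P'$ is obtained from $P$ by composing with the Serre kernel; taking cohomology of these complexes gives the graded $\Ext$-groups in the statement.

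Next I would spell out why the Serre kernel enters the homology formula. Recall that the Serre functor of $\BD(X)$ has kernel $\Delta_*\omega_X[\dim X]$, and that $P \circ \SSS_X$ denotes the convolution of $P$ with this kernel. The general formula for Hochschild homology of a DG-category $\mathcal D$ is $\HOH_\bullet(\mathcal D) = \mathcal D \lotimes_{\mathcal D \otimes \mathcal D^{\opp}} \mathcal D$, while Hochschild cohomology is $\RHom_{\mathcal D \otimes \mathcal D^{\opp}}(\mathcal D, \mathcal D)$. Under the dictionary between DG-enhancements and Fourier--Mukai kernels established in \cite{kuznetsov2009hochschild}, the diagonal bimodule corresponds to $P$, and the $\mathcal D$-$\mathcal D$-bimodule structure is encoded by convolution; the homology is then computed by a Tor-type complex which, after applying Serre--Grothendieck duality on $X \times X$, is recognized as $\Ext^\bullet(P, P \circ \SSS_X)$. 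For the cohomology side the identification is more direct: $\HOH^\bullet(\CA) = \RHom$ of the identity bimodule with itself, and under the dictionary this is precisely $\RHom_{X\times X}(P,P)$, whose cohomology is $\Ext^\bullet(P,P)$.

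The main obstacle I anticipate is ensuring that the DG-enhancement of $\CA$ inherited from $\BD(X)$ is the one whose Hochschild invariants are computed by $P$; this requires knowing that the kernel calculus is compatible with the enhancement, i.e.\ that convolution of kernels models composition in the enhanced category and that adjunction units and counits are induced by morphisms of kernels (this last point is exactly the content of \cite{anno2012adjunctions} recalled in the preliminaries). Granting these compatibilities, the proof reduces to transporting the standard bar-complex formulas for $\HOH^\bullet$ and $\HOH_\bullet$ through the equivalence between the relevant DG-bimodule category and the category of kernels, where the diagonal bimodule goes to $P$ and twisting by the Serre functor on one side goes to convolution with the Serre kernel. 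I would therefore present the argument as: (1) cite the kernel description of $P$ and its idempotency; (2) cite from \cite{kuznetsov2009hochschild} the compatibility of the kernel calculus with the DG-enhancement; (3) deduce $\HOH^\bullet(\CA) = \Ext^\bullet(P,P)$ directly; (4) for $\HOH_\bullet(\CA)$, apply the same compatibility together with the fact that the Serre functor of $\CA$ is $\RR_{\CA^\perp}$ applied after $\SSS_X$ — or more simply that on kernels the homology bimodule is $P$ twisted by $\SSS_X$ — to conclude $\HOH_\bullet(\CA) = \Ext^\bullet(P, P \circ \SSS_X)$. Most of the work is bookkeeping in the kernel formalism; no delicate new estimate or construction is needed beyond what \cite{kuznetsov2009hochschild} supplies.
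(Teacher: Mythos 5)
The paper itself gives no proof of this lemma: it is quoted from \cite{kuznetsov2009hochschild}, where the formulas $\HOH^\bullet(\CA)=\Ext^\bullet(P,P)$ and $\HOH_\bullet(\CA)=\Ext^\bullet(P,P\circ\SSS_X)$ are essentially taken as the kernel-level definition of the Hochschild invariants of an admissible subcategory, the compatibility with the DG-enhancement definition being established there through the dictionary between kernels on $X\times X$ and DG bimodules (derived Morita theory) together with the fact that units/counits are realized on kernels. Your sketch follows exactly that route --- identify the idempotent projection kernel $P$ with the diagonal bimodule of the induced enhancement, transport the bar-complex definitions, and use Serre duality on $X\times X$ for the homology side --- so it is consistent with the cited source and I see no gap beyond what that reference supplies.
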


For $\CA = \BD(X)$ the Hochschild homology and cohomology are related to classical invariants of $X$
via the Hochschild--Kostant--Rosenberg isomorphism (HKR for short):
\begin{equation}
\HOH^n(\BD(X)) = \bigoplus_{p+q = n} H^q(X,\Lambda^pT_X),
\qquad
\HOH_n(\BD(X)) = \bigoplus_{q-p = n} H^q(X,\Omega^p_X).
\end{equation} 

The Hochschild cohomology of any category has a structure of a graded algebra (and moreover, of a Gerstenhaber algebra),
and the Hochschild homology is a right module over it. Hochschild homology has a nice additivity property.

\begin{lemma}[\cite{kuznetsov2009hochschild}]\label{lemma-hoh-additivity}
If $\CA = \langle \CA_1, \dots, \CA_m \rangle$ is a semiorthogonal decomposition then
\begin{equation*}
\HOH_\bullet(\CA) = \HOH_\bullet(\CA_1) \oplus \dots \oplus \HOH_\bullet(\CA_m).
\end{equation*}
\end{lemma}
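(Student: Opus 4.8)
The statement is the additivity of Hochschild homology along a semiorthogonal decomposition. I would prove it with Fourier--Mukai kernels, following \cite{kuznetsov2009hochschild}. The plan has four steps: represent $\HOH_\bullet(\CA)$ and each $\HOH_\bullet(\CA_i)$ by $\Ext$-groups of projection kernels; use the kernel filtration of the projector of $\CA$ to present $\HOH_\bullet(\CA)$ as an iterated extension of ``mixed'' $\Ext$-groups $\Ext^\bullet_{X\times X}(P_i, P_j\circ\SSS_X)$; show the off-diagonal mixed terms ($i\ne j$) vanish using semiorthogonality; and deduce that the remaining filtration, whose associated graded is $\bigoplus_i\HOH_\bullet(\CA_i)$, splits.

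For the setup, embed $\CA$ into an ambient decomposition, say $\BD(X) = \langle {}^\perp\CA, \CA_1, \dots, \CA_m \rangle$, and apply Lemma~\ref{lemma-kernels-projectors}: it produces a chain $0 = \Delta_{m+1} \to \Delta_m \to \dots \to \Delta_1 \to \Delta_0 = \Delta_*\CO_X$ in $\BD(X\times X)$ whose successive cones are the projection kernels of ${}^\perp\CA, \CA_1, \dots, \CA_m$. In particular $P := \Delta_1$ is the projection kernel of $\CA = \langle\CA_1,\dots,\CA_m\rangle$, and the truncated chain $0 = \Delta_{m+1}\to\dots\to\Delta_1 = P$ has successive cones the projection kernels $P_1,\dots,P_m$ of $\CA_1,\dots,\CA_m$. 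These kernels are idempotent ($P_i\circ P_i\cong P_i$), semiorthogonality forces $P_i\circ P_j\cong 0$ for $i\ne j$ (a sibling component has vanishing projection to $\CA_i$), and $P\circ P_i\cong P_i\cong P_i\circ P$. By the kernel description of Hochschild homology \cite{kuznetsov2009hochschild}, $\HOH_\bullet(\CA) = \Ext^\bullet_{X\times X}(P, P\circ\SSS_X)$ and $\HOH_\bullet(\CA_i) = \Ext^\bullet_{X\times X}(P_i, P_i\circ\SSS_X)$.

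Now the bifunctor $(K,L)\mapsto\Ext^\bullet_{X\times X}(K, L\circ\SSS_X)$ is cohomological in each argument (convolution with $\SSS_X$ is exact), so filtering both occurrences of $P$ by the chain above exhibits $\HOH_\bullet(\CA)$ as a finite iterated extension of the groups $\Ext^\bullet_{X\times X}(P_i, P_j\circ\SSS_X)$, $1\le i,j\le m$, the diagonal ones being $\HOH_\bullet(\CA_i)$. The heart of the matter is the vanishing $\Ext^\bullet_{X\times X}(P_i, P_j\circ\SSS_X) = 0$ for $i\ne j$. Here I would use that precomposition and postcomposition with $P_i$ on $\BD(X\times X)$ have adjoints given by (pre/post)composition with $P_i^*$ and $P_i^!$ --- honest adjunctions whose units and counits are kernel morphisms by \cite{anno2012adjunctions} --- and transport a factor of $P_i$ (inserted via idempotence $P_i = P_i\circ P_i = P_i\circ P$) from one slot to the other, combining this with the Serre-functor identity $P_i^!\circ\SSS_X\cong\SSS_X\circ P_i^*$ of Lemma~\ref{lemma-serre-basics}(i). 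This rewrites the mixed term as an $\Ext$-group into (or out of) one of the compositions $P_i\circ P_j$, $P_j\circ P_i$, $P_i^!\circ P_j$, $P_i^*\circ P_j$; one relative position of $\CA_i,\CA_j$ makes the relevant composition zero directly, and the complementary position is reached by Serre duality on $X\times X$ (using $P\otimes\omega_{X\times X}[2\dim X]\cong\SSS_X\circ P\circ\SSS_X$). Equivalently one may invoke the vanishing of the Hochschild homology $\HOH_\bullet(\CA_i,\CA_j)$ of a semiorthogonal pair from \cite{kuznetsov2009hochschild}. Finally, with all off-diagonal terms zero, the natural maps supplied by the functoriality of $\HOH_\bullet$ under the structure kernel morphisms (cf.\ \cite{kuznetsov2009hochschild}) split the filtration, giving $\HOH_\bullet(\CA)\cong\bigoplus_i\HOH_\bullet(\CA_i)$.

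I expect the main obstacle to be the vanishing of the off-diagonal terms: keeping the convolution-adjunction bookkeeping straight, in particular tracking which of the compositions $P_i\circ P_j$, $P_i^!\circ P_j$, $P_i^*\circ P_j$ (and their variants) actually vanishes in each of the cases $i<j$ and $i>j$, since this depends on the relative order of $\CA_i$ and $\CA_j$, and then setting up the Serre-duality reflection on $X\times X$ to cover the remaining case. A secondary point is producing genuine splitting maps rather than merely a filtration, which again rests on the functoriality of $\HOH_\bullet$ with respect to kernel morphisms established in \cite{kuznetsov2009hochschild}.
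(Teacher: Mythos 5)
This lemma is not proved in the paper at all --- it is quoted from \cite{kuznetsov2009hochschild} --- and your sketch reconstructs essentially the argument of that reference: the kernel description of Hochschild homology, the filtration of the projection kernel of $\CA$ with graded pieces the projection kernels $P_i$, the vanishing of the mixed terms $\Ext^\bullet(P_i,P_j\circ\SSS_X)$ for $i\ne j$, and the resulting degeneration. Two small corrections: with the components listed in your order the extra piece must be $\CA^\perp$, i.e.\ $\BD(X)=\langle \CA^\perp,\CA_1,\dots,\CA_m\rangle$, not $\langle {}^\perp\CA,\CA_1,\dots,\CA_m\rangle$, since $\Hom(\CA_i,{}^\perp\CA)$ need not vanish; and the heart of the matter, the off-diagonal vanishing that you leave schematic, is carried out in \cite{kuznetsov2009hochschild} by locating the kernels $P_i$ inside semiorthogonal decompositions of $\BD(X\times X)$ induced from those of the two factors, with one relative order of $i,j$ handled directly by semiorthogonality and the other by composing with the Serre kernel --- which is the rigorous form of the adjunction-and-Serre-duality bookkeeping you propose. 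Note also that your final splitting step needs no extra functoriality input: the same off-diagonal vanishing shows that $\Ext^\bullet(P_i,\Delta_{j}\circ\SSS_X)$-type groups vanish, hence the connecting maps in the long exact sequences of the filtration are zero, and the resulting short exact sequences of graded vector spaces split automatically.
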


Hochschild cohomology is additive only for completely orthogonal decompositions.
On the other hand, it has a nice nonvanishing property.

\begin{lemma}\label{lemma-choh-additivity}
If $\CT = \langle \CA, \CB \rangle$ is a completely orthogonal decomposition, i.e. $\Hom(\CA,\CB) = \Hom(\CB,\CA) = 0$, then
\begin{equation*}
\HOH^\bullet(\CT) = \HOH^\bullet(\CA) \oplus \HOH^\bullet(\CB).
\end{equation*}
If $\CA \ne 0$ then $\HOH^0(\CA) \ne 0$.
\end{lemma}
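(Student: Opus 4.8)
The plan is to prove the two assertions separately, the first by reducing to a computation with Fourier--Mukai kernels, the second by exhibiting an explicit nonzero class in degree zero. For the additivity statement, I would invoke Lemma~\ref{lemma-kernels-projectors}: since $\CT = \langle \CA, \CB\rangle$ is a semiorthogonal decomposition with admissible components (admissibility is automatic here, as both are orthogonal complements of each other), there is a triangle of Fourier--Mukai kernels $\Delta_{\CB} \to \Delta_*\CO_X \to \Delta_{\CA}$, where $\Delta_{\CA}$ and $\Delta_{\CB}$ are the kernels of the projection functors $P_{\CA}$ and $P_{\CB}$. By the previous lemma, $\HOH^\bullet(\CT) = \Ext^\bullet(\Delta_*\CO_X, \Delta_*\CO_X)$, $\HOH^\bullet(\CA) = \Ext^\bullet(\Delta_{\CA}, \Delta_{\CA})$, and similarly for $\CB$. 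The key point is that complete orthogonality $\Hom(\CA,\CB) = \Hom(\CB,\CA) = 0$ forces the cross terms $\Ext^\bullet(\Delta_{\CA},\Delta_{\CB})$ and $\Ext^\bullet(\Delta_{\CB},\Delta_{\CA})$ to vanish: indeed $\Delta_{\CA} \circ \Delta_{\CB}$ and $\Delta_{\CB}\circ\Delta_{\CA}$ are zero functors (the composition of the projections onto completely orthogonal components kills everything), and one can identify $\Ext^\bullet$ between the kernels with morphisms of the corresponding functors, hence with natural transformations factoring through such a composition. Once the cross terms vanish, applying $\Ext^\bullet(-, \Delta_{\CA})$ and $\Ext^\bullet(-,\Delta_{\CB})$ to the defining triangle and taking the direct sum yields the splitting $\HOH^\bullet(\CT) \cong \HOH^\bullet(\CA) \oplus \HOH^\bullet(\CB)$.

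For the nonvanishing claim, the point is that $\HOH^0(\CA) = \Ext^0(\Delta_{\CA},\Delta_{\CA}) = \Hom(P_{\CA}, P_{\CA})$, the ring of endomorphisms of the projection functor, and this ring always contains the identity endomorphism. So it suffices to check that $\id_{P_{\CA}}$ is nonzero as an element of $\Hom(\Delta_{\CA},\Delta_{\CA})$, equivalently that $\Delta_{\CA} \ne 0$ as a kernel, equivalently that $P_{\CA}$ is not the zero functor. But $P_{\CA}\circ \alpha = \alpha$ where $\alpha:\CA \to \BD(X)$ is the embedding, so $P_{\CA}$ is nonzero precisely when $\CA \ne 0$, which is our hypothesis. (Concretely, for $0 \ne A \in \CA$ one has $P_{\CA}(A) = A \ne 0$, so the identity acts nontrivially.) Hence $\HOH^0(\CA) \ne 0$.

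I expect the only real subtlety to be the justification that $\Ext^\bullet$ between the projection kernels $\Delta_{\CA}, \Delta_{\CB}$ genuinely vanishes in the cross cases. One must be careful that a priori $\Ext^\bullet(\Delta_{\CA}, \Delta_{\CB})$ is computed in $\BD(X\times X)$ and is not literally the same as morphisms of functors in all degrees; however, by~\cite{kuznetsov2009hochschild} (the cited lemma and the discussion of Fourier--Mukai kernels as an enhancement replacement) these $\Ext$-groups do compute natural transformations of the shifted functors, and complete orthogonality of $\CA$ and $\CB$ makes $P_{\CA} P_{\CB} = P_{\CB} P_{\CA} = 0$ as functors, killing all of them. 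Given the machinery already set up in the paper, this is a short argument; the rest is formal homological algebra on the triangle of kernels.
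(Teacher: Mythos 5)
Your proof of the nonvanishing claim is exactly the paper's: the projection kernel of a nonzero admissible subcategory is a nonzero object of $\BD(X\times X)$, so its identity gives a nonzero class in $\Ext^0(P,P)=\HOH^0(\CA)$. That part is fine.

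For the additivity claim, however, there is a genuine gap at precisely the point you flag. The paper does not reprove this statement; it simply cites \cite[Thm.~7.7]{kuznetsov2009hochschild}, and your attempted shortcut around that theorem does not work as written. The step you need is the vanishing of $\Ext^\bullet_{X\times X}(\Delta_\CA,\Delta_\CB)$ and $\Ext^\bullet_{X\times X}(\Delta_\CB,\Delta_\CA)$, and your justification is that these $\Ext$-groups ``compute natural transformations of the shifted functors'' which then ``factor through'' the zero composition $P_\CA\circ P_\CB$. Neither half of this is available: the passage from kernels to functors is neither full nor faithful in general (this is exactly why the paper works with kernels as a replacement for an enhancement, and \cite{kuznetsov2009hochschild} does not assert such an identification -- its Theorem~7.7 \emph{is} the additivity statement you are trying to reprove), and even for honest natural transformations there is no reason a map $P_\CA\to P_\CB[i]$ should factor through the composite of the two projections. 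The vanishing of the composed \emph{functor} does imply the convolution of the kernels vanishes, but that by itself does not kill $\Ext$ between the kernels; to extract the vanishing one needs either the convolution-adjunction formalism applied to the kernel-level idempotency of the projectors, or (as in \cite{kuznetsov2009hochschild}) the fact that $\Delta_\CA$ lies in the subcategory of $\BD(X\times X)$ generated by objects $F\boxtimes A$ with $A\in\CA$ and $\Delta_\CB$ in the one generated by $F\boxtimes B$ with $B\in\CB$, after which the K\"unneth formula and $\Hom(\CA,\CB)=\Hom(\CB,\CA)=0$ give the cross-term vanishing. Once that vanishing is in place, your formal argument with the triangle of kernels does yield the splitting. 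A secondary point: the lemma is applied in the paper with $\CT$ a proper admissible subcategory of $\BD(X)$ (the Calabi--Yau component), so the relevant triangle is $\Delta_\CB\to P_\CT\to\Delta_\CA$ with $P_\CT$ the projection kernel of $\CT$, not $\Delta_*\CO_X$; your setup only covers $\CT=\BD(X)$, though the argument adapts once the main gap is repaired.
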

\begin{proof}
The first follows from \cite[Thm.~7.7]{kuznetsov2009hochschild}. For the second note that for nonzero $\CA$ the corresponding 
projection kernel $P$ is nonzero, and hence has a nonzero endomorphism (the identity).
\end{proof}

\subsection{Spherical functors}\label{subsection-spherical}

Spherical functors were introduced in~\cite{anno2007spherical}, see also~\cite{anno2013spherical}
for a more recent development. 
The following is equivalent to the classical definition.

\begin{definition}[cf.~\cite{anno2007spherical}]\label{defsph}
A Fourier--Mukai functor $\Phi:\BD(X) \to \BD(Y)$ is {\sf spherical} if
\renewcommand{\theenumi}{\roman{enumi}}%
\begin{enumerate}
\item
the map $\Phi^* \oplus \Phi^! \xrightarrow{\ \eta_{\Phi^!,\Phi}\circ\Phi^* + \Phi^!\circ\eta_{\Phi,\Phi^*}\ } \Phi^!\circ\Phi\circ\Phi^*$
is an isomorphism, and
\item
the map $\Phi^*\circ\Phi\circ\Phi^! \xrightarrow{\ \Phi^*\circ\epsi_{\Phi,\Phi^!} + \epsi_{\Phi^*,\Phi}\circ\Phi^!\ } \Phi^* \oplus \Phi^!$
is an isomorphism.
\end{enumerate}
\end{definition}

\begin{proposition}\label{proposition-spherical-twists}
If the conditions of Definition~\ref{defsph} are satisfied then the functors $T_X$ and $T'_X$ as well as the functors $T_Y$ and $T'_Y$
defined by the following distinguished triangles
\begin{align}
T_Y \xrightarrow{\qquad} &\id \xrightarrow{\ \eta_{\Phi,\Phi^*}\ } \Phi\circ\Phi^*,\label{defty}\\
\Phi^*\circ\Phi \xrightarrow{\ \epsi_{\Phi^*,\Phi}\ } &\id \xrightarrow{\qquad} T_X,\label{deftx}\\
\Phi\circ\Phi^! \xrightarrow{\ \epsi_{\Phi,\Phi^!}\ } &\id \xrightarrow{\qquad} T'_Y,\label{deftpy}\\
T'_X \xrightarrow{\qquad} &\id \xrightarrow{\ \eta_{\Phi^!,\Phi}\ } \Phi^!\circ\Phi.\label{deftpx}
\end{align}
are mutually inverse autoequivalences of $\BD(X)$ and $\BD(Y)$.
\end{proposition}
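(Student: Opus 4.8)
The plan is to establish the four isomorphisms of functors
\[
T_Y\circ T'_Y\cong\id_{\BD(Y)},\qquad T'_Y\circ T_Y\cong\id_{\BD(Y)},\qquad T_X\circ T'_X\cong\id_{\BD(X)},\qquad T'_X\circ T_X\cong\id_{\BD(X)};
\]
granted these, $T_Y$ and $T'_Y$ are mutually quasi-inverse autoequivalences of $\BD(Y)$ and $T_X$ and $T'_X$ are mutually quasi-inverse autoequivalences of $\BD(X)$, which is the assertion. Since $\Phi$ is Fourier--Mukai, so are $\Phi^*$, $\Phi^!$ and all of $T_X$, $T'_X$, $T_Y$, $T'_Y$; hence the triangles~\eqref{defty}--\eqref{deftpx} are triangles of kernels, the maps in Definition~\ref{defsph} are morphisms of kernels, and the operations used below — composing a triangle of functors with a fixed functor on either side, rotating, taking cones and fibers — are all legitimate and behave as expected.

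The one computational input is a list of four isomorphisms, obtained by composing the triangles~\eqref{defty}--\eqref{deftpx} with $\Phi^*$ or with $\Phi^!$ on the appropriate side and substituting the isomorphisms of Definition~\ref{defsph}:
\[
\Phi^!\circ T_Y\cong\Phi^*[-1],\qquad \Phi^*\circ T'_Y\cong\Phi^![1],\qquad T_X\circ\Phi^!\cong\Phi^*[1],\qquad T'_X\circ\Phi^*\cong\Phi^![-1].
\]
For instance, composing~\eqref{defty} on the left with $\Phi^!$ gives a distinguished triangle $\Phi^!\circ T_Y\to\Phi^!\xrightarrow{\ \Phi^!\circ\eta_{\Phi,\Phi^*}\ }\Phi^!\circ\Phi\circ\Phi^*$, in which the second arrow is precisely the restriction to the summand $\Phi^!$ of the isomorphism $\Phi^*\oplus\Phi^!\xrightarrow{\sim}\Phi^!\circ\Phi\circ\Phi^*$ of Definition~\ref{defsph}(i); reading the cone off the resulting split triangle one gets $\Phi^!\circ T_Y[1]\cong\Phi^*$, i.e., $\Phi^!\circ T_Y\cong\Phi^*[-1]$. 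The remaining three are obtained in the same way using Definition~\ref{defsph}(ii), (ii) and (i) respectively: each time the relevant whiskered unit or counit is a summand component of the isomorphism of Definition~\ref{defsph}, and the cone is read off the split triangle.

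Each of the four composite isomorphisms then follows by one more application of the same scheme. Take $T'_Y\circ T_Y$: composing~\eqref{deftpy} on the right with $T_Y$ yields a distinguished triangle $\Phi\circ\Phi^!\circ T_Y\to T_Y\to T'_Y\circ T_Y$ whose first term, by the first identity above, is $\Phi\circ\Phi^*[-1]$; on the other hand, rotating~\eqref{defty} gives a distinguished triangle $\Phi\circ\Phi^*[-1]\to T_Y\to\id_{\BD(Y)}$ with the same first two terms. If the two maps $\Phi\circ\Phi^*[-1]\to T_Y$ agree, then the third terms agree, so $T'_Y\circ T_Y\cong\id_{\BD(Y)}$. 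In the same way: $T_Y\circ T'_Y\cong\id$ from~\eqref{defty} composed on the right with $T'_Y$, using $\Phi^*\circ T'_Y\cong\Phi^![1]$ and a rotation of~\eqref{deftpy}; $T_X\circ T'_X\cong\id$ from~\eqref{deftpx} composed on the left with $T_X$, using $T_X\circ\Phi^!\cong\Phi^*[1]$ and a rotation of~\eqref{deftx}; and $T'_X\circ T_X\cong\id$ from~\eqref{deftx} composed on the left with $T'_X$, using $T'_X\circ\Phi^*\cong\Phi^![-1]$ and a rotation of~\eqref{deftpx}. The only step that is not purely formal is, in each case, the verification that the map produced by the composition coincides, under the indicated identification of functors, with the connecting morphism of the defining triangle it is being matched against. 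I expect this compatibility — a diagram chase through the definitions of the twist functors, the units and counits of the three adjunctions, and the isomorphisms of Definition~\ref{defsph} — to be the main obstacle; once it is in place, the invertibility of the four functors is automatic from the cone and fiber presentations.
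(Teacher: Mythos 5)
Your overall strategy coincides with the paper's: whisker the defining triangles \eqref{defty}--\eqref{deftpx} with the adjoint functors, use the splitting isomorphisms of Definition~\ref{defsph} to identify the resulting cones (your four identities such as $T'_X\circ\Phi^*\cong\Phi^![-1]$ are exactly what the paper extracts, e.g.\ in the form $(\delta\circ\Phi^*)\circ(\Phi^!\circ\eta_{\Phi,\Phi^*}):\Phi^!\xrightarrow{\ \sim\ }T'_X[1]\circ\Phi^*$), and then match the whiskered triangle against a rotation of the other defining triangle to conclude that the third vertices agree. However, the step you set aside as ``the main obstacle'' is not a peripheral formality to be expected: it is the entire content of the paper's proof, and as written your argument has a genuine gap there. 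Two things are missing. First, your four auxiliary isomorphisms must be fixed as \emph{specific} morphisms --- namely the connecting morphism of the whiskered triangle precomposed with the complementary summand inclusion supplied by the other unit/counit of Definition~\ref{defsph} --- since comparing two triangles with ``the same first two terms'' only yields an isomorphism of the third terms once a commuting square with a concrete identification is exhibited; an unspecified abstract isomorphism of cones does not suffice.

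Second, the compatibility itself does require proof, and the proof is short but not content-free. In the paper's version (for $T'_X\circ T_X$): one whiskers by $\Phi$ and must check that the composite
\begin{equation*}
\Phi^!\circ\Phi \xrightarrow{\ \Phi^!\circ\eta_{\Phi,\Phi^*}\circ\Phi\ } \Phi^!\circ\Phi\circ\Phi^*\circ\Phi \xrightarrow{\ \delta\circ\Phi^*\circ\Phi\ } T'_X[1]\circ\Phi^*\circ\Phi \xrightarrow{\ T'_X[1]\circ\epsi_{\Phi^*,\Phi}\ } T'_X[1]
\end{equation*}
equals $\delta$. This follows from (a) the square involving $\delta$ and $\epsi_{\Phi^*,\Phi}$ commuting because the two morphisms are applied in different functor slots, and (b) the unit--counit identity $(\Phi\circ\epsi_{\Phi^*,\Phi})\circ(\eta_{\Phi,\Phi^*}\circ\Phi)=\id_\Phi$ recalled in the preliminaries, whiskered by $\Phi^!$. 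With this in hand the comparison diagram of the two triangles has commuting right square and two vertical isomorphisms, so the axioms of triangulated categories produce the isomorphism $\id\cong T'_X\circ T_X$; the other three compositions are handled by the same chase (using part (i) or (ii) of Definition~\ref{defsph} as appropriate), exactly as in the paper. So: right plan, same route as the paper, but incomplete until each of the four compatibilities is verified by this zig-zag/interchange argument rather than asserted as expected.
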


The idea behind the proof is very simple --- assuming equality $abc = a + c$ one can deduce from it $(1-ab)(1-cb) = 1$ by multiplying 
the equality with $b$. The argument below is a categorical version of this taking care of all the subtleties.

\begin{proof}
Denote the connecting morphism $\Phi^!\circ\Phi \to T'_X[1]$ in~\eqref{deftpx} by $\delta$.
Composing a rotation of~\eqref{deftpx} with~$\Phi^*$ on the right we get a distinguished triangle
\begin{equation*}
\Phi^* \xrightarrow{\ \eta_{\Phi^!,\Phi}\circ\Phi^*\ } \Phi^!\circ\Phi\circ\Phi^* \xrightarrow{\quad\delta\circ\Phi^*\quad} T'_X[1] \circ \Phi^*.
\end{equation*}
Using Definition~\ref{defsph}(i) we conclude that the map $(\delta\circ\Phi^*)\circ (\Phi^!\circ\eta_{\Phi,\Phi^*}) : \Phi^! \to \Phi^!\circ\Phi\circ\Phi^* \to T'_X[1] \circ \Phi^*$
is an isomorphism. We multiply this with $\Phi$ on the right and check that the composition of the resulting morphism with $T'_X[1]\circ\epsi_{\Phi^*,\Phi}:T'_X[1]\circ\Phi^*\circ\Phi \to T'_X[1]$ 
coincides with $\delta$. This follows from the diagram
\begin{equation*}
\xymatrix{
\Phi^!\circ\Phi \ar[rrr]^-{\Phi^!\circ\eta_{\Phi,\Phi^*}\circ\Phi} \ar@{-->}[drrr] &&&
\Phi^!\circ\Phi\circ\Phi^*\circ\Phi \ar[rrr]^-{\delta\circ\Phi^*\circ\Phi} \ar[d]^{\Phi^!\circ\Phi\circ\epsi_{\Phi^*,\Phi}} &&&
T'_X[1]\circ\Phi^*\circ\Phi \ar[d]^{T'_X[1]\circ\epsi_{\Phi^*,\Phi}}
\\
&&&
\Phi^!\circ\Phi \ar[rrr]^-{\delta} &&&
T'_X[1]
}
\end{equation*}
Indeed, the square commutes since the vertical and the horizontal arrows in it act on different variables,
and the diagonal dashed arrow is the identity by the standard characterization of adjunction (composed with $\Phi^!$ on the left). 
This means that in the diagram
\begin{equation*}
\xymatrix{
\id \ar[rrr]^-{\eta_{\Phi^!,\Phi}} \ar@{..>}[d] &&&
\Phi^!\circ\Phi \ar[rrr]^-{\delta} \ar[d]_{(\delta\circ\Phi^*\circ\Phi)\circ (\Phi^!\circ\eta_{\Phi,\Phi^*}\circ\Phi)} &&&
T'_X[1] \ar@{=}[d]
\\
T'_X\circ T_X \ar[rrr] &&&
T'_X[1]\circ\Phi^*\circ\Phi \ar[rrr]^-{T'_X[1]\circ\epsi_{\Phi^*,\Phi}} &&&
T'_X[1]
}
\end{equation*}
where the top line is the distinguished triangle~\eqref{deftpx} and the bottom line is the distinguished triangle~\eqref{deftx} composed with $T'_X[1]$ on the left,
the right square is commutative. Since the vertical arrows are isomorphisms, it follows that there is a dotted vertical arrow on the left,
which is also an isomorphism. 

Thus $T'_X \circ T_X \cong \id$.
Analogously one proves that the other compositions are isomorphic to the identity. For $T'_Y\circ T_Y$ part $(i)$ of Definition~\ref{defsph} is used,
while for $T_X\circ T'_X$ and for $T_Y\circ T'_Y$ part~$(ii)$ is used.
\end{proof}

\begin{remark}
It may well be that it is enough to assume only one of the conditions of Definition~\ref{defsph}.
Indeed, assuming for example part $(i)$ we can prove that the compositions $T'_X\circ T_X$ and $T'_Y\circ T_Y$ are isomorphic to identity.
On the other hand, it is easy to see that $T'_X$ and $T'_Y$ are right adjoint to $T_X$ and $T_Y$ respectively. So, it follows
that $T_X$ and $T_Y$ are fully faithful endofunctors. It is very tempting to conjecture that any such endofunctor 
of the derived category of a smooth projective variety is an autoequivalence --- then it would follow that $T'_X$ and
$T'_Y$ are quasiinverse of $T_X$ and $T_Y$ and so are also autoequivalences. Up to now it is not clear
how this conjecture can be proved. However, it can be easily deduced from the following
\end{remark}

\begin{conjecture}[noetherian property]
Any decreasing chain $\BD(X) = \CA_0 \supset \CA_1 \supset \CA_2 \supset \dots$ of admissible subcategories stabilizes,
i.e.\ for sufficiently large $n$ one has $\CA_i = \CA_{i+1}$ for all $i \ge n$.
\end{conjecture}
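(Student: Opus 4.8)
The plan is to argue by contradiction and to reduce, as far as possible, to a question about \emph{phantom} subcategories, which is where I expect the essential difficulty to sit. Suppose $\BD(X) = \CA_0 \supsetneq \CA_1 \supsetneq \CA_2 \supsetneq \dots$ is an infinite strictly decreasing chain of admissible subcategories. First I would record that if $\CA' \subset \CA$ are admissible subcategories of $\BD(X)$, then $\CA'$ is admissible inside $\CA$: writing $\alpha,\alpha'$ for the embeddings into $\BD(X)$ and $j\colon \CA' \hookrightarrow \CA$ for the inclusion, the functors $(\alpha')^*\circ\alpha$ and $(\alpha')^!\circ\alpha$ are a left and a right adjoint of $j$. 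Hence each step of the chain yields a two-term semiorthogonal decomposition $\CA_{i-1} = \langle \CC_i, \CA_i\rangle$ with $\CC_i := \CA_i^{\perp}\cap \CA_{i-1}$ admissible in $\BD(X)$, and splicing these together gives, for every $n$, a semiorthogonal decomposition
\begin{equation*}
\BD(X) = \langle \CC_1, \CC_2, \dots, \CC_n, \CA_n \rangle ,
\end{equation*}
in which every $\CC_i$ is nonzero precisely because the chain is strict.

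Next I would feed this into the additive invariants of Section~\ref{subsection-hochschild}. Applying Lemma~\ref{lemma-hoh-additivity} to the decomposition above gives
\begin{equation*}
\HOH_\bullet(\BD(X)) = \HOH_\bullet(\CA_n) \oplus \bigoplus_{i=1}^{n} \HOH_\bullet(\CC_i)
\end{equation*}
for every $n$. Since $\HOH_\bullet(\BD(X))$ has finite total dimension (e.g.\ by HKR), the dimensions $\dim \HOH_\bullet(\CC_i)$ have bounded partial sums, so $\HOH_\bullet(\CC_i) = 0$ for all but finitely many $i$; the same runs with any other additive invariant ($K$-theory, topological $K$-theory, the class in noncommutative motives, \dots). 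Thus the conjecture is \emph{equivalent} to the statement that $\BD(X)$ admits no infinite semiorthogonal collection of phantom subcategories (admissible subcategories on which all additive invariants vanish), or, what is the same, that one cannot have an infinite chain of admissible subcategories all of whose successive ``quotients'' $\CC_i$ are phantoms.

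This last reduction is exactly where I expect to get stuck, and it is the heart of the matter: phantom subcategories of derived categories of smooth projective varieties do exist, and by construction they are invisible to every additive or localizing invariant, so no counting argument of the type above can close the gap. To go further one would need a genuinely non-additive finiteness principle special to $\BD(X)$ for $X$ smooth projective. Two directions I would try: (a) bounding the ``size'' of the situation — hoping that the Fourier--Mukai kernels $P_{\CA_n}\in\BD(X\times X)$ of the projection functors (Lemma~\ref{lemma-kernels-projectors}), which form a chain of idempotents under convolution, have uniformly bounded cohomological amplitude and Tor-dimension, and then invoking a boundedness statement for such families of kernels on the noetherian scheme $X\times X$; (b) exploiting the increasing chain of complements $\CA_n^{\perp} = \langle \CC_1,\dots,\CC_n\rangle$ together with a gluing/compatibility argument for the associated bounded t-structures (or stability conditions) to force stabilization. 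I would not expect either route to be routine — the fact that this is recorded only as a conjecture strongly suggests that ruling out infinite towers of phantoms needs an idea not presently available; a realistic intermediate target is to prove the statement under the extra hypothesis that the $\CA_i$ eventually have nonvanishing Hochschild homology, where the additivity argument above already suffices.
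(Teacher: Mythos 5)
This statement is not proved in the paper at all: it is stated as a conjecture (the ``noetherian property''), recorded precisely because no argument for it is known, and used only to motivate the remark that fully faithful endofunctors such as $T_X$ would then automatically be equivalences. So there is no proof of record to compare yours against, and the only question is whether your argument actually closes the problem. It does not, and you say so yourself. Your intermediate steps are fine: nested admissible subcategories of $\BD(X)$ are admissible in one another, so a strictly decreasing chain splices into semiorthogonal decompositions $\BD(X)=\langle \CC_1,\dots,\CC_n,\CA_n\rangle$ with all $\CC_i\neq 0$, and additivity of Hochschild homology (Lemma~\ref{lemma-hoh-additivity}) together with finite-dimensionality of $\HOH_\bullet(\BD(X))$ via HKR forces $\HOH_\bullet(\CC_i)=0$ for all but finitely many $i$; note that you cannot run the same count with Hochschild cohomology, where Lemma~\ref{lemma-choh-additivity} would give nonvanishing, because $\HOH^\bullet$ is additive only for completely orthogonal decompositions.

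The genuine gap is exactly the step you flag: ruling out an infinite semiorthogonal chain whose successive complements $\CC_i$ are phantoms. Since phantom and quasi-phantom admissible subcategories are known to exist (on certain surfaces of general type, for instance), every additive or localizing invariant is blind to the $\CC_i$, so no counting argument of the kind you set up can finish; and your proposed escapes (a) and (b) — boundedness for the chain of projection kernels $P_{\CA_n}\in\BD(X\times X)$ from Lemma~\ref{lemma-kernels-projectors}, or a stabilization argument via the increasing chain $\CA_n^{\perp}=\langle\CC_1,\dots,\CC_n\rangle$ and t-structures/stability conditions — are directions, not arguments. What you have is a correct and reasonable reformulation (the conjecture is equivalent to the nonexistence of infinite phantom towers), plus a proof in the special case where the $\CA_i$ eventually have nonvanishing Hochschild homology of the successive complements; the residual statement is precisely the open content of the conjecture, consistent with the paper leaving it open.
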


We will give examples of spherical functors in the next section (Examples~\ref{example-divisor}, \ref{example-covering}, and~\ref{example-root}).
For completeness we show that Definition~\ref{defsph} is equivalent to the standard one.

\begin{proposition}\label{proposition-new-old-definitions}
Definition~{\rm\ref{defsph}}\/ is equivalent to the original definition of a spherical functor in~\cite{anno2007spherical}.
\end{proposition}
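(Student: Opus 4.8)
The plan is to recall the original definition from \cite{anno2007spherical} and then show it matches Definition~\ref{defsph} essentially by unwinding the axioms attached to the four triangles \eqref{defty}--\eqref{deftpx}. The classical definition requires that the cone $T_Y$ of $\eta_{\Phi,\Phi^*}:\id \to \Phi\circ\Phi^*$ (the "twist") be an autoequivalence, the cone $T'_X$ of $\eta_{\Phi^!,\Phi}:\id \to \Phi^!\circ\Phi$ (the "cotwist") be an autoequivalence, and two compatibility conditions relating them: namely that $\Phi^! \cong T_X^{-1}\circ \Phi^* \circ T_Y^{-1}[?]$, or equivalently in the formulation we will use, that $\Phi^!\circ T_Y \cong T'_X\circ \Phi^*$ and dually $T_Y\circ \Phi \cong \Phi\circ T'_X$, up to the appropriate shift conventions. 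I would begin by fixing precise references to the form of the definition in \cite{anno2007spherical} (and the cleaner reformulation in \cite{anno2013spherical}), stating it verbatim in the proof so the reader can compare.

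First I would show that Definition~\ref{defsph} implies the classical one. The fact that $T_Y$ and $T'_X$ are autoequivalences is exactly the content of Proposition~\ref{proposition-spherical-twists} (whose proof uses both (i) and (ii)), so that part is free. For the compatibility, I would use the octahedron on the composition $\eta_{\Phi^!,\Phi}\circ\Phi^*$ followed by the isomorphism of Definition~\ref{defsph}(i): rotating \eqref{deftpx} composed with $\Phi^*$ on the right and invoking (i) identifies $T'_X[1]\circ\Phi^*$ with the cone of $\eta_{\Phi,\Phi^*}\circ\Phi^* \hookrightarrow$ embedded appropriately, which after composing with $\Phi$ on the right and using the counit $\epsi_{\Phi^*,\Phi}$ (exactly the diagram chase already carried out in the proof of Proposition~\ref{proposition-spherical-twists}) yields the desired $\Phi^!\circ T_Y \cong T'_X\circ\Phi^*$. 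The dual compatibility follows by the same argument applied to condition (ii), composing \eqref{deftx} with $\Phi^!$ and $\Phi$.

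For the converse, assuming the classical definition, I would reverse the chase: the octahedral axiom applied to $\id \xrightarrow{\eta_{\Phi^!,\Phi}} \Phi^!\circ\Phi$ composed with the unit $\eta_{\Phi,\Phi^*}$ produces a triangle whose third term is, after using that $T'_X$ is invertible together with the classical compatibility isomorphism, forced to be $\Phi^!\circ\Phi\circ\Phi^*$ built out of $\Phi^* \oplus \Phi^!$; checking that the resulting map agrees on the nose with $\eta_{\Phi^!,\Phi}\circ\Phi^* + \Phi^!\circ\eta_{\Phi,\Phi^*}$ gives condition (i), and the dual computation gives (ii). The main obstacle is bookkeeping: matching the precise shift and sign conventions between the "twist/cotwist plus compatibility" formulation and the two split-exactness conditions, and making sure the compatibility isomorphism one gets from the octahedron is the structural one (induced by units/counits) rather than merely an abstract isomorphism. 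This is where I would be most careful, and I expect to lean on the functoriality of Fourier--Mukai kernels (so all maps in sight come from morphisms of kernels, per the conventions of Section~\ref{subsection-hochschild} and \cite{anno2012adjunctions}) to pin down the identifications, plus a reference to \cite{anno2013spherical} where the equivalence of several such reformulations is already worked out in the DG setting.
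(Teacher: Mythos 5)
Your overall strategy coincides with the paper's: the forward implication is harvested from Proposition~\ref{proposition-spherical-twists}, and the converse is a triangle comparison in which invertibility of the cotwist plus the classical compatibility isomorphism force the map of Definition~\ref{defsph}(i) to be an isomorphism (the paper does this not with an octahedron but with the simpler observation that the split triangle $\Phi^*\to\Phi^*\oplus\Phi^!\to\Phi^!$ maps to the triangle obtained by composing \eqref{deftpx} with $\Phi^*$, with identity on the left and the classical isomorphism on the right, so the middle arrow is an isomorphism by two-out-of-three).

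The one point you must repair is your statement of the original definition, and it is not merely a matter of shifts. Anno's definition asks that $T'_X$ be an autoequivalence and that the canonical map $\Phi^!\to T'_X[1]\circ\Phi^*$ (unit followed by the connecting morphism $\delta\circ\Phi^*$) be an isomorphism; the twist $T_Y$ does not enter the compatibility at all. Your proposed relations $\Phi^!\circ T_Y\cong T'_X\circ\Phi^*$, $T_Y\circ\Phi\cong\Phi\circ T'_X$, and $\Phi^!\cong T_X^{-1}\circ\Phi^*\circ T_Y^{-1}$ are wrong as functor compositions, not just in the shift: for a spherical functor one has $\Phi^!\cong T'_X[1]\circ\Phi^*$ and $\Phi^!\circ T_Y\cong\Phi^*[-1]$, so your first relation would force $\Phi^*\cong\Phi^!$ up to shift, which fails already in Example~\ref{example-divisor}. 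With the definition stated correctly, your "compatibility" step in the forward direction evaporates: both required properties are literally what the proof of Proposition~\ref{proposition-spherical-twists} establishes, so that direction is immediate. For condition (ii) in the converse you cannot appeal to an assumed second compatibility (the original definition has none); the paper instead invokes Anno's Proposition~1, that $T_X$ is quasiinverse to $T'_X$, to obtain $\Phi^*\cong T_X[-1]\circ\Phi^!$ and then repeats the same triangle comparison. Your fallback of citing \cite{anno2013spherical} for the equivalence of reformulations would also close this, but then the content of the proposition is being outsourced rather than proved.
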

\begin{proof}
Recall that the original definition amounted to assuming $T'_X$ is an autoequivalence and the map 
$(\delta\circ\Phi^*)\circ (\Phi^!\circ\eta_{\Phi,\Phi^*}) : \Phi^! \to \Phi^!\circ\Phi\circ\Phi^* \to T'_X[1] \circ \Phi^*$
is an isomorphism. As we already proved both these properties in Proposition~\ref{proposition-spherical-twists}, 
it follows that Definition~\ref{defsph} implies the one in~\cite{anno2007spherical}.

For the converse we compose the triangle~\eqref{deftpx} with $\Phi^*$ on the right and consider the commutative diagram
with the top line being the trivial triangle
\begin{equation*}
\xymatrix{
\Phi^* \ar[rrr] \ar@{=}[d] &&&
\Phi^* \oplus \Phi^! \ar[rrr] \ar[d]^{\eta_{\Phi^!,\Phi}\circ\Phi^* + \Phi^!\circ\eta_{\Phi,\Phi^*}} &&&
\Phi^! \ar[d]^{(\delta\circ\Phi^*)\circ (\Phi^!\circ\eta_{\Phi,\Phi^*})} \\
\Phi^* \ar[rrr]^-{\eta_{\Phi^!,\Phi}\circ\Phi^*} &&&
\Phi^!\circ\Phi \circ\Phi^* \ar[rrr]^-{\delta\circ\Phi^*} &&&
T'_X[1]\circ\Phi^*
}
\end{equation*}
It is easy to see that this is a morphism of triangles. Moreover, the left and the right vertical arrows are isomorphisms, 
hence so is the middle arrow. Further, by Proposition~1 of~\cite{anno2007spherical}
we know that $T_X$ is quasiinverse to $T'_X$, so it follows that $\Phi^* \cong T_X[-1] \circ \Phi^!$. Then the same argument as above proves 
that $\Phi^*\circ\Phi\circ\Phi^! \cong \Phi^* \oplus \Phi^!$.
\end{proof}

One of the advantages of Definition~\ref{defsph} in comparison with the original definition is that 
it uses neither the triangulated structures nor enhancements of $\BD(X)$ and $\BD(Y)$, and can be used 
for arbitrary functors between additive categories.
Further on we will also use the following standard property

\begin{corollary}\label{corollary-t-phi}
If $\Phi$ is a spherical functor and $T_X$ and $T_Y$ are the autoequivalences of $\BD(X)$ and $\BD(Y)$ defined by~\eqref{deftx} and~\eqref{defty} respectively, then
there are canonical isomorphisms
\begin{equation*}
\Phi \circ T_X  \cong T_Y \circ \Phi \circ [2]
\qquad\text{and}\qquad
T_X \circ \Phi^* \cong \Phi^* \circ T_Y \circ [2].
\end{equation*}
\end{corollary}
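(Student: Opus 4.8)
The plan is to deduce both isomorphisms from the two triangle identities for the adjunction $(\Phi^*,\Phi)$ recalled at the beginning of Section~\ref{section-preliminaries}, together with the following elementary consequence of the octahedral axiom: for composable morphisms (of Fourier--Mukai functors) $\Phi \xrightarrow{f} G \xrightarrow{g} H$ there is a distinguished triangle $\Cone(f) \to \Cone(gf) \to \Cone(g) \to \Cone(f)[1]$, so that if $gf$ is an isomorphism then $\Cone(gf)=0$ and the connecting map exhibits a canonical isomorphism $\Cone(g) \xrightarrow{\ \sim\ } \Cone(f)[1]$. Everything below takes place at the level of kernels, so all the operations used (composing a triangle of functors with $\Phi$ or $\Phi^*$ on either side, forming cones, rotating) are legitimate by~\cite{anno2012adjunctions}.

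For the first isomorphism I would compose the triangle~\eqref{defty} on the right with $\Phi$ and the triangle~\eqref{deftx} on the left with $\Phi$, obtaining distinguished triangles
\begin{equation*}
T_Y\circ\Phi \to \Phi \xrightarrow{\ \eta_{\Phi,\Phi^*}\circ\Phi\ } \Phi\circ\Phi^*\circ\Phi
\qquad\text{and}\qquad
\Phi\circ\Phi^*\circ\Phi \xrightarrow{\ \Phi\circ\epsi_{\Phi^*,\Phi}\ } \Phi \to \Phi\circ T_X ,
\end{equation*}
whence $\Cone(\eta_{\Phi,\Phi^*}\circ\Phi) \cong T_Y\circ\Phi\circ[1]$ and $\Cone(\Phi\circ\epsi_{\Phi^*,\Phi}) \cong \Phi\circ T_X$. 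Now the composite $\Phi \xrightarrow{\eta_{\Phi,\Phi^*}\circ\Phi} \Phi\circ\Phi^*\circ\Phi \xrightarrow{\Phi\circ\epsi_{\Phi^*,\Phi}} \Phi$ is $\id_\Phi$ by one of the triangle identities, so the octahedral consequence above gives the desired canonical isomorphism $\Phi\circ T_X \cong (T_Y\circ\Phi\circ[1])\circ[1] = T_Y\circ\Phi\circ[2]$, realised explicitly as the composite $\Phi\circ T_X \to \Phi\circ\Phi^*\circ\Phi\circ[1] \to T_Y\circ\Phi\circ[2]$ of connecting morphisms.

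For the second isomorphism I would run the identical argument using the other triangle identity: compose~\eqref{defty} on the left with $\Phi^*$ and~\eqref{deftx} on the right with $\Phi^*$, getting $\Cone(\Phi^*\circ\eta_{\Phi,\Phi^*}) \cong \Phi^*\circ T_Y\circ[1]$ and $\Cone(\epsi_{\Phi^*,\Phi}\circ\Phi^*) \cong T_X\circ\Phi^*$; since $(\epsi_{\Phi^*,\Phi}\circ\Phi^*)\circ(\Phi^*\circ\eta_{\Phi,\Phi^*}) = \id_{\Phi^*}$, the same octahedral step yields $T_X\circ\Phi^* \cong \Phi^*\circ T_Y\circ[2]$. (Alternatively, this second isomorphism can be obtained from the first by passing to left adjoints, using Proposition~\ref{proposition-spherical-twists} to identify $T_X^*=T_X^{-1}=T'_X$ and $T_Y^*=T_Y^{-1}=T'_Y$, but the direct argument is cleaner and does not require invoking that proposition.) There is no real obstacle in this proof; the only point requiring care is the bookkeeping of shifts and the observation that the comparison maps produced by the octahedron are canonical precisely because $\Cone(\id)=0$.
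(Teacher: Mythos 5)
Your proof is correct, but it takes a genuinely different route from the paper's. The paper deduces the corollary from the isomorphism $\Phi^*[1] \cong T_X \circ \Phi^!$ established in the proof of Proposition~\ref{proposition-new-old-definitions}, together with the analogous relation $\Phi^*[-1] \cong \Phi^! \circ T_Y$ obtained from~\eqref{defty}; combining these gives $T_X\circ\Phi^*\cong\Phi^*\circ T_Y\circ[2]$, and the first isomorphism then follows by passing to adjoints. That argument genuinely uses condition (i) of Definition~\ref{defsph} (it is what makes $\Phi^*\oplus\Phi^!\to\Phi^!\Phi\Phi^*$ an isomorphism) and the invertibility of the twists. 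Your argument, by contrast, uses only the two triangle identities of the adjunction (which hold at the kernel level by~\cite{anno2012adjunctions}, as the paper itself exploits) together with the octahedral axiom in $\BD(X\times Y)$ applied to $\Phi\xrightarrow{\eta\circ\Phi}\Phi\Phi^*\Phi\xrightarrow{\Phi\circ\epsi}\Phi$ and its mirror for $\Phi^*$; since the composite is the identity, the middle cone vanishes and the connecting maps identify $\Cone(\Phi\circ\epsi)\cong\Phi\circ T_X$ with $\Cone(\eta\circ\Phi)[1]\cong T_Y\circ\Phi\circ[2]$, and likewise $T_X\circ\Phi^*\cong\Phi^*\circ T_Y\circ[2]$. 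Your bookkeeping of shifts is right, and your explicit description of the isomorphism as a composite of connecting morphisms is exactly the map the octahedron supplies, so the canonicity claim is at the same level of rigor as the paper's own triangle manipulations. What each approach buys: the paper's proof recycles work already done for Proposition~\ref{proposition-new-old-definitions} and keeps everything inside the web of relations among $\Phi^*$, $\Phi^!$, $T_X$, $T_Y$ that sphericity provides; yours is more self-contained and strictly more general, since it shows the two intertwining isomorphisms hold for the twist and cotwist of an arbitrary Fourier--Mukai functor with adjoints, sphericity being needed only to know that $T_X$ and $T_Y$ are autoequivalences as asserted in the statement.
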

\begin{proof}
We already showed in the proof of Proposition~\ref{proposition-new-old-definitions} that $\Phi^*[1] \cong T_X \circ \Phi^!$.
An analogous argument, using~\eqref{defty} shows that $\Phi^*[-1] \cong \Phi^! \circ T_Y$.
Combining these two isomorphisms we conclude that 
\begin{equation*}
\Phi^* \circ T_Y^{-1} [-1] \cong \Phi^! \cong T_X^{-1} \circ \Phi^*[1]. 
\end{equation*}
Multiplying with~$T_X$ on the left
and with $T_Y[1]$ on the right we deduce the second isomorphism. Furthermore, passing to the right adjoint functors (and shifting by 1) we deduce the first isomorphism.
\end{proof}

\section{A construction of fractional Calabi--Yau categories}\label{section-construction}

\subsection{The setup}\label{sec-setup}

Assume we are given a smooth projective variety (or a stack) $M$ with a rectangular Lefschetz decomposition of length $m$
with respect to a line bundle $\CL_M$ (see Definition~\ref{definition:ld}). 
Assume also given another smooth projective variety (or a stack) $X$ and a spherical functor
$\Phi:\BD(X) \to \BD(M)$ which is compatible with the Lefschetz decomposition
in a certain way. Before explaining the compatibility conditions, let us first discuss
a number of model situations. In all these examples, in fact, the functor $\Phi$ is 
the (derived) pushforward for a morphism $f:X \to M$.

\begin{example}\label{example-divisor}
The map $f:X \to M$ is a divisorial embedding with the image $f(X)$ being a divisor in the linear system $\CL_M^d$ for some $1 \le d \le m$.
\end{example}

\begin{example}\label{example-covering}
The map $f:X \to M$ is a double covering branched in a divisor in the linear system~$\CL_M^{2d}$, again for some $1 \le d \le m$.
\end{example}

The third example is very similar to the second, but has some special features.

\begin{example}\label{example-root}
Let $\tf:\TX \to \TM$ be a double covering branched in a divisor in the linear system $\CL_\TM^{2d}$ for some $1 \le d \le m$.
This morphism is $\mu_2$-equivariant, where the group $\mu_2 = \{ \pm 1 \}$ acts on $\TX$ via the covering involution,
and on $\TM$ trivially. Let $X = [\TX/\mu_2]$, $M = [\TM/\mu_2]$ be the quotient stacks (thus $X$ is $\TM$ with 
the $\mu_2$-stacky structure along the branch divisor of $\tf$, while $M$ is $\TM$ with the $\mu_2$-stacky 
structure everywhere). The map $\tf$ descends to a map $X \to M$ which we denote by $f$.
\end{example}

In the next Proposition we check that in all these cases the functor $\Phi = f_* : \BD(X) \to \BD(M)$
is spherical, compute the corresponding spherical twists $T_M$ and $T_X$, and check some of their properties.
In all cases we denote $\CL_X := f^*\CL_M$, the pullback of the line bundle $\CL_M$ to $X$. Recall that according
to our conventions we also denote by $\CL_M$ and $\CL_X$ the autoequivalences of $\BD(M)$ and $\BD(X)$ defined as tensor
products with $\CL_M$ and $\CL_X$ respectively.

\begin{proposition}\label{proposition-examples}
Let $f: X \to M$ be a map from either of Examples~\ref{example-divisor}, \ref{example-covering}, or~\ref{example-root}.
Then the functor $\Phi = f_*:\BD(X) \to \BD(M)$ is spherical. 
Moreover, the spherical twist $T_X$ commutes with $\CL_X$, 
and an appropriate power of the functor
$\rho = T_X \circ \CL_X^d \cong \CL_X^d \circ T_X$ 
is a shift.
Finally, if $\omega_M = \CL_M^{-m}$ then an appropriate power of the functor $\sigma = \SSS_X \circ T_X \circ \CL_X^m$ is also a shift.
\end{proposition}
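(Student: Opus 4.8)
The plan is to treat each example uniformly by pushing all the computations through the structure of $f$ as either a divisorial embedding or a (possibly stacky) double cover, and to reduce every assertion to a standard fact about the relative dualizing complex $\omega_{X/M}$ together with the explicit description of the Fourier--Mukai kernel of $\Phi^*\circ\Phi$.

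\medskip

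First I would compute the adjoints of $\Phi = f_*$. In all three cases $f$ is a finite (or finite-type lci) morphism, so $\Phi^! = f^!$ and $\Phi^* = f^* $ are honest functors, and by Grothendieck duality $f^!(-) = f^*(-)\otimes\omega_{X/M}[\dim X - \dim M]$. In Example~\ref{example-divisor} $f$ is a divisor in $|\CL_M^d|$, so $\omega_{X/M} = \CL_X^d$ and $\dim X - \dim M = -1$; in Examples~\ref{example-covering} and~\ref{example-root} $f$ is a degree-$2$ cover branched in $|\CL_M^{2d}|$, so $\dim X = \dim M$ and $\omega_{X/M} = \CL_X^d$ (the ramification divisor is the pullback of half the branch divisor). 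Thus in every case
\begin{equation*}
\Phi^! \cong \Phi^* \circ \CL_X^d \circ [\eps], \qquad \eps \in \{-1,0\},
\end{equation*}
and sphericality of $\Phi$ is either already known (divisorial embedding: this is essentially~\cite{anno2007spherical}; double cover: a standard exercise using $f_*\CO_X = \CO_M \oplus \CL_M^{-d}$) or can be checked directly from Definition~\ref{defsph} using the explicit kernel $\Delta_*\CO_M \oplus (\Delta_*\CL_M^{-d})$ of $\Phi\circ\Phi^*$ and the analogous kernel of $\Phi^*\circ\Phi$. I would then read off the twists: from~\eqref{deftx} and the identification $\Phi^*\Phi(-) = (-)\oplus(-\otimes\CL_X^{-d})$ in the covering case (resp.\ the two-term complex $\CL_X^{-d}\to\CO_X$ in the divisorial case), one sees $T_X$ is the twist by a complex built from $\CL_X$, hence commutes with $\CL_X$; the same reading gives $T_X^{\,2}\cong \CL_X^{-2d}[\text{shift}]$ in the double-cover cases and $T_X$ itself $\cong \CL_X^{-d}[\text{shift}]$ up to mutation in the divisorial case. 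In all cases some power of $\rho = T_X\circ\CL_X^d$ is a shift.

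\medskip

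For the final sentence I would argue as follows. By Lemma~\ref{lemma-serre-basics}(ii) the Serre functor $\SSS_X$ commutes with every autoequivalence of $\BD(X)$, so $\sigma = \SSS_X\circ T_X\circ\CL_X^m$ is a composite of commuting autoequivalences and we may compute its powers factor by factor:
\begin{equation*}
\sigma^N \cong \SSS_X^N \circ T_X^N \circ \CL_X^{mN}.
\end{equation*}
We already know $T_X^N\circ\CL_X^{mN} = (T_X\circ\CL_X^d)^N\circ\CL_X^{(m-d)N}$ and the first factor is a shift for suitable $N$; so it suffices to show that a power of $\SSS_X\circ\CL_X^{k}$ is a shift for the relevant value $k = (m-d)\,\text{(mod adjustments)}$ — equivalently, that a power of $\SSS_X\circ T_X\circ\CL_X^m$ restricted modulo the already-controlled pieces is. Here the hypothesis $\omega_M = \CL_M^{-m}$ enters: one has $\SSS_M(-) = (-)\otimes\CL_M^{-m}[\dim M]$, and via $\SSS_X = \SSS_X\circ\Phi^*\circ\Phi\circ(\Phi^*\Phi)^{-1}$ together with Lemma~\ref{lemma-serre-basics}(i) in the form $\Phi^!\circ\SSS_M = \SSS_X\circ\Phi^*$ we can transport the relation $\omega_M^{\otimes 1}\cong\CL_M^{-m}$ to a relation among $\SSS_X$, $\CL_X$ and $\omega_{X/M}=\CL_X^d$. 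Concretely, $\SSS_X(F) = f^!\SSS_M f_* F \otimes (\text{correction})$ unwinds, using $\omega_{X/M}=\CL_X^d$ and $\omega_M = \CL_M^{-m}$, to $\omega_X = \CL_X^{d-m}$, so $\SSS_X \cong \CL_X^{d-m}[\dim X]$. Then
\begin{equation*}
\sigma = \SSS_X\circ T_X\circ\CL_X^m \cong \CL_X^{d-m}[\dim X]\circ T_X\circ\CL_X^m \cong \big(T_X\circ\CL_X^d\big)\circ[\dim X] = \rho\circ[\dim X],
\end{equation*}
and since a power of $\rho$ is a shift, so is the corresponding power of $\sigma$.

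\medskip

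The main obstacle I anticipate is not the Serre-functor bookkeeping — once $\omega_X = \CL_X^{d-m}$ is established the last step collapses as above — but rather the careful verification, \emph{in the stacky Example~\ref{example-root}}, that Grothendieck duality, the computation $\omega_{X/M}=\CL_X^d$, and the identification of the kernel of $\Phi^*\circ\Phi$ all go through for the quotient stacks $[\TX/\mu_2]$ and $[\TM/\mu_2]$, where $f$ is not representable. I would handle this by working $\mu_2$-equivariantly upstairs on $\tf:\TX\to\TM$ (where everything is classical) and then taking invariants, noting that $\CL_\TM$ descends to $\CL_M$ on the stack and that the relative canonical of $[\TX/\mu_2]\to[\TM/\mu_2]$ agrees with that of $\tf$ as a $\mu_2$-equivariant sheaf. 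The divisorial and ordinary-double-cover cases are then the easy specializations.
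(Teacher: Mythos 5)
Your overall strategy (Grothendieck duality, the decomposition of $f_*\CO_X$, reading off $T_X$ from the standard triangles, and handling Example~\ref{example-root} by working $\mu_2$-equivariantly upstairs) is the same as the paper's, and your uniform shortcut for the last claim --- $\omega_X=\CL_X^{d-m}$, hence $\SSS_X\cong\CL_X^{d-m}[\dim X]$ and $\sigma\cong\rho\circ[\dim X]$ --- is a nice simplification which is valid in Examples~\ref{example-divisor} and~\ref{example-covering}. But there is a genuine error in your identification of $\Phi^*\circ\Phi$ and hence of $T_X$ in the covering cases: it is \emph{not} true that $f^*f_*(F)\cong F\oplus(F\otimes\CL_X^{-d})$. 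Over a point of $M$ with two preimages $x$ and $\tau(x)$ the stalk of $f^*f_*F$ at $x$ is $F_x\oplus F_{\tau(x)}$, so the covering involution must enter: the correct triangle is $\tau^*F\otimes\CL_X^{-d}\to f^*f_*F\to F$, giving $T_X=\tau\circ\CL_X^{-d}[1]$ in Example~\ref{example-covering} and $T_X=\CL_X^{-d}\otimes\chi[1]$ in Example~\ref{example-root}. With your description $\rho=T_X\circ\CL_X^d$ would itself be a shift in the covering case, which is false ($\rho=\tau[1]$, and only $\rho^2$ is a shift); and the commutation $T_X\circ\CL_X\cong\CL_X\circ T_X$ is not automatic from ``$T_X$ is built from $\CL_X$'' but uses $\tau^*\CL_X\cong\CL_X$, i.e.\ that $\CL_X$ is pulled back from $M$. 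Your hedged statement that only $T_X^2\cong\CL_X^{-2d}[\text{shift}]$ is controlled is correct, but it is inconsistent with the identification of $\Phi^*\Phi$ you propose to derive it from.

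The same omission recurs in the stacky case, where your claim that the relative canonical of $[\TX/\mu_2]\to[\TM/\mu_2]$ ``agrees with that of $\tf$'' misses the equivariant twist: the paper's computation gives $\omega_{X/M}=\CL_X^{d}\otimes\chi$, not $\CL_X^d$ (a local check at a ramification point shows the generator $ds/dt$ is anti-invariant). Consequently $\SSS_X\cong\CL_X^{d-m}\otimes\chi[\dim X]$ and your identity $\sigma\cong\rho\circ[\dim X]$ fails by a factor of $\chi$ in Example~\ref{example-root}: there the paper finds $\sigma$ itself is the shift $[\dim M+1]$, while $\rho[\dim X]=\chi[\dim M+1]$. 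The qualitative conclusions of the Proposition survive your errors only because $\tau^2=\id$ and $\chi^2=1$, but the precise formulas (which are exactly what feed into Corollaries~\ref{corollary-divisor}--\ref{corollary-root} and distinguish, e.g., when $\sigma$ versus $\sigma^2$ is a shift) come out wrong, so the argument needs the involution and the character reinstated throughout.
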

\begin{proof}
First assume that $f:X \to M$ is as in Example~\ref{example-divisor}. 
The relative canonical class is $\omega_{X/M} = \CL_X^d$, the relative dimension is $-1$, 
hence $f^!(F) \cong f^*(F)\otimes \CL_X^d[-1]$. Therefore
\begin{multline*}
f^!(f_*(f^*(F))) \cong 
f^!(F \otimes f_*\CO_X) \cong 
f^!(F \otimes (\CL_M^{-d} \xrightarrow{\ \varphi\ } \CO_M)) \cong \\ \cong
f^*(F) \otimes (\CL_X^{-d}[1] \oplus \CO_X) \otimes \CL_X^d[-1] \cong
f^*(F) \oplus f^!(F).
\end{multline*}
Here the first isomorphism is the projection formula, 
the second is the Koszul resolution for $f_*\CO_X$ (with~$\varphi$ being the equation of $X$ in $M$),
the third is the definition of $f^!$ combined with the fact that $\varphi_{|X} = 0$,
and the fourth is the definition of $f^!$ again. Computing analogously the composition $f^*\circ f_* \circ f^!$
we see that Definition~\ref{defsph} holds, so $f_*$ is a spherical functor. Finally, the standard distinguished triangles
\begin{equation*}
F \otimes \CL_M^{-d} \to F \to f_*f^*(F)
\qquad\text{and}\qquad
F \otimes \CL_X^{-d}[1] \to f^*f_*(F) \to F
\end{equation*}
show that in this case the spherical twists are 
\begin{equation}\label{equation-divisor-twists}
T_M = \CL_M^{-d}
\qquad\text{and}\qquad
T_X = \CL_X^{-d}[2].
\end{equation} 
Clearly $T_X$ commutes with~$\CL_X$ and 
\begin{equation}\label{equation-divisor-rs}
\rho = \CL_X^{-d}[2] \circ \CL_X^d = [2],
\qquad 
\sigma = \CL_X^{d} \circ f^*\omega_M[\dim M-1] \circ \CL_X^{-d}[2] \circ \CL_X^m = f^*(\omega_M\otimes\CL_M^m)[\dim M + 1],
\end{equation}
so with our assumptions both these functors are shifts.

\bigskip

Now assume that $f:X \to M$ is as in Example~\ref{example-covering}. 
Then the relative canonical class is again $\omega_{X/M} = \CL_X^d$, but the relative dimension is $0$, 
hence $f^!(F) \cong f^*(F)\otimes \CL_X^d$. Therefore
\begin{multline*}
f^!(f_*(f^*(F))) \cong 
f^!(F \otimes f_*\CO_X) \cong 
f^!(F \otimes (\CL_M^{-d} \oplus \CO_M)) \cong 
f^*(F) \otimes (\CL_X^{-d} \oplus \CO_X) \otimes \CL_X^d \cong
f^*(F) \oplus f^!(F).
\end{multline*}
Here again, the first isomorphism is the projection formula, 
the second is the definition of the double covering,
the third and the fourth is the definition of $f^!$. Computing analogously the composition $f^*\circ f_* \circ f^!$
we see that Definition~\ref{defsph} holds, so $f_*$ is a spherical functor. Finally, the standard distinguished triangles
\begin{equation*}
F \to f_*f^*(F) \to F \otimes \CL_M^{-d} 
\qquad\text{and}\qquad
\tau^*F \otimes \CL_X^{-d} \to f^*f_*(F) \to F,
\end{equation*}
where $\tau$ is the involution of the covering, show that in this case the spherical twists are 
\begin{equation}\label{equation-covering-twists}
T_M = \CL_M^{-d}[-1]
\qquad\text{and}\qquad
T_X = \tau \circ \CL_X^{-d}[1].
\end{equation} 
Since $\tau(\CL_X) \cong \CL_X$ it follows that the twist $T_X$ commutes with $\CL_X$ and
\begin{equation}\label{equation-covering-rs}
\rho = \tau \circ \CL_X^{-d}[1] \circ \CL_X^d = \tau[1],
\qquad 
\sigma = \CL_X^{d} \circ f^*\omega_M[\dim M] \circ \tau \circ \CL_X^{-d}[1] \circ \CL_X^m = \tau \circ f^*(\omega_M \otimes \CL_M^m) [\dim M + 1],
\end{equation}
so with our assumptions $\rho^2$ and $\sigma^2$ are shifts.

\bigskip

Finally, assume that $\tilde{f}:\widetilde{X} \to \TM$ and $f:X \to M$ are as in Example~\ref{example-root},
so that $\BD(X) = \BD(\TX)^{\mu_2}$ and $\BD(M) = \BD(\TM)^{\mu_2}$ are the $\mu_2$-equivariant derived categories of $\TX$ and $\TM$ respectively.
The functors $f^*$, $f_*$, and $f^!$ can be thought of as $\tf^*$, $\tf_*$, and $\tf^!$ with their natural equivariant structures (see~\cite{kuznetsov2014derived} for details).
Denote $\CL_\TX := \tf^*\CL_\TM$ and let $\chi$ be the nontrivial character of $\mu_2$ (so that $\chi^2 = 1$). Note that equivariantly we have
\begin{equation*}
f_*\CO_X = (\CL_M^{-d}\otimes \chi) \oplus \CO_M
\qquad\text{and}\qquad
\omega_{X/M} = \CL_M^d \otimes \chi.
\end{equation*}
Therefore, analogously to the previous case we have
\begin{multline*}
f^!(f_*(f^*(F))) \cong 
f^!(F \otimes f_*\CO_X) \cong 
f^!(F \otimes (\CL_M^{-d} \otimes \chi \oplus \CO_M)) \cong \\ \cong
f^*(F) \otimes (\CL_X^{-d} \otimes \chi \oplus \CO_X) \otimes \CL_X^d \otimes \chi \cong
f^*(F) \oplus f^!(F).
\end{multline*}
Computing analogously the composition $f^*\circ f_* \circ f^!$ we see that Definition~\ref{defsph} holds, so $f_*$ is a spherical functor. 
Finally, the standard distinguished triangles
\begin{equation*}
F \to f_*f^*(F) \to F \otimes \CL_M^{-d} \otimes \chi
\qquad\text{and}\qquad
F \otimes \CL_X^{-d} \otimes \chi \to f^*f_*(F) \to F
\end{equation*}
(note that $\tau$ acts trivially on any equivariant sheaf), show that in this case the spherical twists are 
\begin{equation}\label{equation-root-twists}
T_M = \CL_M^{-d} \otimes \chi [-1]
\qquad\text{and}\qquad
T_X = \CL_X^{-d} \otimes \chi[1].
\end{equation} 
Clearly, $T_X$ commutes with $\CL_X$ and 
\begin{equation}\label{equation-root-rs}
\rho = \chi \circ \CL_X^{-d}[1] \circ \CL_X^d = \chi[1],
\qquad 
\sigma = \CL_X^{d}\circ\chi\circ f^*\omega_M[\dim M] \circ \chi \circ \CL_X^{-d}[1] \circ \CL_X^m = f^*(\omega_M\otimes\CL_M^m)[\dim M + 1],
\end{equation}
so with our assumptions $\rho^2$ and $\sigma$ are shifts.
This finishes the proof.
\end{proof}

Now we return to the abstract situation of a spherical functor $\Phi:\BD(X) \to \BD(M)$ with the corresponding
spherical twists $T_X$ and $T_M$. We consider the following autoequivalences of $\BD(X)$
\begin{align}
\rho &:= T_X\circ \CL_X^d,\label{equation-sigma}\\
\sigma &:= \SSS_X \circ T_X\circ \CL_X^m.\label{equation-tau}
\end{align}

\begin{theorem}\label{theorem-fcy}
Assume that $M$ and $X$ are smooth projective varieties (or stacks) with a spherical functor $\Phi:\BD(X) \to \BD(M)$
between their derived categories. Let $T_M$ and $T_X$ be the spherical twists. 
Assume that $\BD(M)$ has a rectangular Lefschetz decomposition
\begin{equation}\label{lefy}
\BD(M) = \langle \CB, \CB\otimes\CL_M, \dots, \CB\otimes\CL_M^{m-1} \rangle.
\end{equation} 
Assume that there is some $1 \le d < m$ such that for all $i \in \ZZ$ we have
\begin{equation}\label{equation-tm-cb}
T_M(\CB \otimes \CL_M^i) = \CB \otimes \CL_M^{i-d}.
\end{equation} 
Assume further that there is a line bundle $\CL_X$ on $X$ 
such that $\Phi$ intertwines between $\CL_X$ and $\CL_M$ twists:
\begin{equation}\label{intertwine}
\CL_M \circ \Phi \cong \Phi \circ \CL_X.
\end{equation} 
\item 
Finally, assume that the twist $T_X$ commutes with $\CL_X$ 
\begin{equation}\label{equation-tx-lx}
T_X\circ \CL_X = \CL_X \circ T_X.
\end{equation} 

Then the functor $\Phi^*:\BD(M) \to \BD(X)$ is fully faithful on the component $\CB$ of $\BD(M)$ and induces a semiorthogonal decomposition
\begin{equation}\label{sodx}
\BD(X) = \lan \CA_X,\CB_X,\CB_X\otimes\CL_X, \dots, \CB_X\otimes \CL_X^{m-d-1} \ran,
\end{equation}
where $\CB_X = \Phi^*(\CB)$ and $\CA_X$ is the orthogonal subcategory. 
Moreover, if
$c = \gcd(d,m)$
then $d/c$ power of the Serre functor of the category $\CA_X$ can be expressed as
\begin{equation*}
\SSS_{\CA_X}^{d/c} \cong \rho^{-m/c} \circ \sigma^{d/c}.
\end{equation*}
In particular, if some powers of $\rho$ and $\sigma$ are shifts then
$\CA_X$ is a fractional Calabi--Yau category.
\end{theorem}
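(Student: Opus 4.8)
The plan is to establish the semiorthogonal decomposition first, and then extract the Serre functor of $\CA_X$ using the mutation formulas of Lemma~\ref{lemma-serre-mutation} together with Lemma~\ref{lemma:serre-lefschetz} and the interplay between $\Phi$, $\Phi^*$, and the spherical twists.

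\emph{Step 1: the semiorthogonal decomposition.} First I would show that $\Phi^*$ is fully faithful on $\CB$. The key input is that a spherical functor $\Phi$ has adjoints related by $\Phi^*[1]\cong T_X\circ\Phi^!$ (proved inside Proposition~\ref{proposition-new-old-definitions}), hence $\Phi^*\circ\Phi\cong\id$ on a subcategory precisely when $T_X$ acts suitably; more directly, the triangle~\eqref{deftx} gives $\Phi^*\Phi\to\id\to T_X$, so it suffices to see that $T_X$ kills nothing relevant --- but the cleaner route is to use that $\Phi\circ\Phi^*\cong\id$ modulo $T_M$ via~\eqref{defty}, and then the hypothesis~\eqref{equation-tm-cb} together with the Lefschetz structure~\eqref{lefy} forces $\Phi^*$ to be fully faithful on $\CB$ and, more importantly, forces the images $\Phi^*(\CB),\Phi^*(\CB)\otimes\CL_X,\dots,\Phi^*(\CB)\otimes\CL_X^{m-d-1}$ to be semiorthogonal. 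Concretely, for $0\le j<i\le m-d-1$ one computes $\Hom(\CB_X\otimes\CL_X^i,\CB_X\otimes\CL_X^j)$ by adjunction as $\Hom$-groups inside $\BD(M)$ involving $\Phi\Phi^*$ applied to $\CB\otimes\CL_M^i$; inserting the triangle~\eqref{defty} and using $T_M(\CB\otimes\CL_M^i)=\CB\otimes\CL_M^{i-d}$ one reduces the vanishing to the semiorthogonality already present in~\eqref{lefy} (note $i-j\le m-d-1$ and $i-j-d$ stays in range). The subcategory $\CA_X$ is then defined as the left orthogonal, and~\eqref{sodx} follows from the general extension mechanism for admissible subcategories recalled in Section~\ref{subsection-sod}. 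Admissibility of $\CB_X$ holds because $\Phi^*$, being a Fourier--Mukai functor, has both adjoints.

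\emph{Step 2: identifying $\SSS_{\CA_X}$ via mutations.} Apply Lemma~\ref{lemma-serre-mutation} repeatedly to the decomposition~\eqref{sodx}: writing $\CC_X:=\langle\CB_X,\dots,\CB_X\otimes\CL_X^{m-d-1}\rangle$ so that $\BD(X)=\langle\CA_X,\CC_X\rangle$, one gets $\SSS_{\CA_X}^{-1}=\LL_{\CC_X}\circ\SSS_X^{-1}$. Now the heart of the matter is to compute $\LL_{\CC_X}$. Here I would use that $\CC_X$ is built from $\CB_X$ by $\CL_X$-twists, so by Lemma~\ref{lemma-mutations-basics}(i),(ii) the left mutation through $\CC_X$ factors as a composition of $\CL_X$-conjugates of $\LL_{\CB_X}$. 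The crucial geometric fact is that $\LL_{\CB_X}$ can be expressed through $\Phi$ and $\Phi^*$: since $\CB_X=\Phi^*(\CB)$ and $\Phi^*$ is spherical-adjoint data, the projection onto $\CB_X$ and the mutation through it are governed by the spherical triangles~\eqref{deftx} and~\eqref{deftpx}. In fact the expected identity is that, on the relevant pieces, $\LL_{\CB_X\otimes\CL_X^{m-d-1}}\circ\cdots\circ\LL_{\CB_X}$ acts, after composing with $\SSS_X^{-1}$, as a twisted version of $T_X^{-1}$ and $\CL_X$-powers, and assembling all the bookkeeping produces exactly $\rho^{m/c}\circ\sigma^{-d/c}$ in $\SSS_{\CA_X}^{-1}$, i.e.\ $\SSS_{\CA_X}^{d/c}\cong\rho^{-m/c}\circ\sigma^{d/c}$ after raising to the $(d/c)$ power. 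The exponent $d/c$ appears because one needs enough copies of the length-$(m-d)$ mutation chain, shifted by multiples of $d$, to cover a full period modulo $m$; since $\mathrm{lcm}(d,m)=dm/c$, one needs $m/c$ blocks spanning $d/c$ applications of $\SSS_X$, which is precisely where $c=\gcd(d,m)$ enters.

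\emph{Step 3: the numerology and the conclusion.} With $\rho=T_X\circ\CL_X^d$ and $\sigma=\SSS_X\circ T_X\circ\CL_X^m$, and using~\eqref{equation-tx-lx} so that all these commute, one checks that $\rho^{-m/c}\circ\sigma^{d/c}$ has total $\SSS_X$-degree $d/c$, total $T_X$-degree $-m/c+d/c$, and total $\CL_X$-degree $-dm/c+dm/c=0$ --- the $\CL_X$-twists cancel, which is the consistency check that the expression is intrinsic to $\CA_X$ and not an artifact. Finally, if $\rho^a\cong[p]$ and $\sigma^b\cong[q]$ are shifts, then a suitable power of $\SSS_{\CA_X}$ (namely the $\mathrm{lcm}$ of $a$, $b$, and $d/c$, appropriately scaled) is a shift, so $\CA_X$ is fractional Calabi--Yau by definition.

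\emph{Main obstacle.} The genuinely hard step is Step~2: making the passage from the abstract mutation $\LL_{\CC_X}$ to the explicit autoequivalence involving $T_X$, $\CL_X$, and $\SSS_X$ completely precise. This requires carefully tracking how $\Phi^*$ transports the rectangular Lefschetz structure of $\BD(M)$ to $\BD(X)$, how the spherical twist $T_M$ (satisfying~\eqref{equation-tm-cb}) corresponds to a "rotation by $d$" of the Lefschetz blocks, and how this rotation, pulled back along $\Phi^*$ and combined with Corollary~\ref{corollary-t-phi}, becomes the twist $\rho=T_X\circ\CL_X^d$ on the $X$-side. One must also verify that the mutation chain genuinely closes up after $d/c$ rounds rather than producing an extra error term --- this is where the hypothesis $d<m$ and the exact length $m-d$ of the $\CC_X$-part are used. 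The rest (Steps~1 and~3) is bookkeeping of adjunctions, commuting autoequivalences, and a congruence count modulo $m$.
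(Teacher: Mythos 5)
Your Step 1 is essentially the paper's argument (composing the triangle \eqref{defty} with the embedding of $\CB$ and its adjoint, and using \eqref{equation-tm-cb} to kill the $T_M$ term), and your opening move in Step 2 — $\BD(X)=\langle\CA_X,\CC_X\rangle$ together with Lemma~\ref{lemma-serre-mutation}, giving $\SSS_{\CA_X}^{-1}=\LL_{\CC_X}\circ\SSS_X^{-1}$, and then factoring $\LL_{\CC_X}$ via Lemma~\ref{lemma-mutations-basics} into $\CL_X$-conjugates of $\LL_{\CB_X}$ — is exactly how the paper proceeds (this is the rotation functor $\SO_X$ and the identity $\LL_{\CC_X}\circ\CL_X^{m-d}=\SO_X^{m-d}$ of Lemma~\ref{rotation-power}, yielding $\SSS_{\CA_X}^{-1}\cong\SO_X^{m-d}\circ\rho\circ\sigma^{-1}$).

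However, the proposal has a genuine gap precisely at the point you yourself flag as the ``main obstacle'': the identity that converts the mutation chain into the autoequivalence $\rho$ on $\CA_X$, namely $\SO_X^d\cong\rho=T_X\circ\CL_X^d$ restricted to $\CA_X$, is only asserted (``the expected identity is that \dots assembling all the bookkeeping produces exactly $\rho^{m/c}\circ\sigma^{-d/c}$''), not proved. This is the actual mathematical content of the theorem, and it does not follow from generalities about spherical functors or from degree bookkeeping. The paper establishes it in three steps: (a) Lemma~\ref{lemma-phi-o-intertwine} constructs, by induction, comparison morphisms $\gamma^i:\Phi^*\circ\SO_M^i\to\SO_X^i\circ\Phi^*$ and proves they are isomorphisms on the subcategory $\langle\CB\otimes\CL_M^{d-i},\dots,\CB\otimes\CL_M^{d-1}\rangle^\perp$; (b) Proposition~\ref{proposition-triangle-o-o} uses these, together with the octahedron axiom, to produce for $0\le i\le d$ a distinguished triangle $\Phi^*\circ\SO_M^i\circ\Phi\to\SO_X^i\to T_X\circ\CL_X^i$ extending the spherical triangle \eqref{deftx}; (c) on $\CA_X$ the first term vanishes because of the alternative description of $\CA_X$ (Lemma~\ref{cata}: $\Phi(\CA_X)\subset\langle\CB\otimes\CL_M^{-d},\dots,\CB\otimes\CL_M^{-1}\rangle$) combined with the nilpotency statement $\SO_M^d=0$ on that subcategory (Corollary~\ref{corollary-so-i-zero}), giving $\SO_X^d\cong\rho$ on $\CA_X$. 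Without (a)--(c) your formula for $\SSS_{\CA_X}^{d/c}$ is unsupported. Also, your explanation of where $d/c$ enters (``$m/c$ blocks spanning $d/c$ applications of $\SSS_X$'') is not the real mechanism: in the paper one simply raises $\SSS_{\CA_X}^{-1}\cong\SO_X^{m-d}\circ\rho\circ\sigma^{-1}$ to the power $d/c$ (all factors commute on $\CA_X$ by Lemma~\ref{lemma-sigma-ox}), which is the minimal power making the exponent $d(m-d)/c$ of $\SO_X$ divisible by $d$, so that $\SO_X^{d(m-d)/c}\cong\rho^{(m-d)/c}$ can be substituted.
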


\begin{remark}
If $d = m$ then the functor $\Phi^*$ is not fully faithful, but still for $\CA_X = \BD(X)$ the result of the Theorem holds.
Indeed, by~\eqref{equation-sigma} and~\eqref{equation-tau} we have $\SSS_{\CA_X} = \SSS_X = \rho^{-1}\circ \sigma$
which agrees with the formula in the Theorem since in this case $c = d = m$.
\end{remark}

Note that Examples~\ref{example-divisor}, \ref{example-covering} and~\ref{example-root} satisfy the assumptions
\eqref{equation-tm-cb}, \eqref{intertwine}, and \eqref{equation-tx-lx} of the Theorem
(in the last Example we need to assume additionally that $\CB \otimes \chi = \CB$, i.e.\ that the Lefschetz decomposition
is induced by a Lefschetz decomposition of $\BD(\widetilde{M})$).
Indeed, \eqref{intertwine} is given by the projection formula, \eqref{equation-tm-cb} and \eqref{equation-tx-lx} 
follow from the description of the functors $T_M$ and $T_X$ in~\eqref{equation-divisor-twists}, \eqref{equation-covering-twists}, and~\eqref{equation-root-twists}.
Note also that if $\omega_M = \CL_M^{-m}$ then
the functors $\rho$ and $\sigma$ are shifts in the first example, and their squares are shifts in the second and the third examples
as it was observed in the proof of Proposition~\ref{proposition-examples}.
Thus, in all these cases the constructed category $\CA_X$ is a fractional Calabi--Yau category. Below we rewrite the conclusion
of the Theorem (assuming a Lefschetz decomposition~\eqref{lefy} of $\BD(M)$ is given and $\omega_M = \CL_M^{-m}$)
in all three examples explicitly, substituting the expressions~\eqref{equation-divisor-rs}, \eqref{equation-covering-rs}, and~\eqref{equation-root-rs}
into the general formula.

\begin{corollary}\label{corollary-divisor}
If $f: X \to M$ is as in Example~\ref{example-divisor}, then $\SSS_{\CA_X}^{d/c} = [(\dim M + 1)d/c - 2m/c]$.
\end{corollary}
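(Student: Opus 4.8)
The plan is to obtain this as an immediate specialization of Theorem~\ref{theorem-fcy}, combined with the explicit computation of $\rho$ and $\sigma$ already performed in the proof of Proposition~\ref{proposition-examples}. First I would verify that $f$ as in Example~\ref{example-divisor} satisfies all the hypotheses of the Theorem. By Proposition~\ref{proposition-examples} the functor $\Phi = f_*$ is spherical, the intertwining condition~\eqref{intertwine} with $\CL_X = f^*\CL_M$ is just the projection formula, the commutation~\eqref{equation-tx-lx} and the condition~\eqref{equation-tm-cb} (with the same $d$ as in the Example, and for any rectangular Lefschetz decomposition~\eqref{lefy} of $\BD(M)$, which is part of the standing assumptions for this block of corollaries) both follow from the formula $T_M = \CL_M^{-d}$ of~\eqref{equation-divisor-twists}, since tensoring by a line bundle preserves $\CB$-twists. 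Hence the Theorem applies and yields $\SSS_{\CA_X}^{d/c} \cong \rho^{-m/c}\circ\sigma^{d/c}$ with $c = \gcd(d,m)$.

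Next I would substitute the values of $\rho$ and $\sigma$ recorded in~\eqref{equation-divisor-rs}, namely $\rho = [2]$ and $\sigma = f^*(\omega_M\otimes\CL_M^m)[\dim M + 1]$. Under the Calabi--Yau assumption $\omega_M = \CL_M^{-m}$ in force for this section, the bundle $\omega_M\otimes\CL_M^m$ is trivial, so $\sigma = [\dim M+1]$. Both $\rho$ and $\sigma$ are then shifts, hence so is $\rho^{-m/c}\circ\sigma^{d/c}$, and composing shifts gives $\SSS_{\CA_X}^{d/c} = [-2m/c]\circ[(\dim M+1)d/c] = [(\dim M+1)d/c - 2m/c]$, which is the asserted formula.

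There is essentially no real obstacle here: all the content lies in Theorem~\ref{theorem-fcy} and Proposition~\ref{proposition-examples}, and the corollary is pure bookkeeping. The only point needing a word of care is the boundary case $d = m$ allowed by Example~\ref{example-divisor} but excluded by the Theorem's hypothesis $d < m$; there one invokes the Remark following the Theorem, so that $\CA_X = \BD(X)$, $c = d = m$, and $\SSS_X = \rho^{-1}\circ\sigma = [\dim M - 1]$. Since $\dim X = \dim M - 1$ (and indeed $\omega_X = (\omega_M\otimes\CL_M^d)|_X = \CO_X$ by adjunction, confirming the Calabi--Yau property directly), this matches $[(\dim M+1)d/c - 2m/c] = [(\dim M+1) - 2]$, so the stated formula holds uniformly.
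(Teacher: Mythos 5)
Your proposal is correct and follows the same route as the paper: verify the hypotheses of Theorem~\ref{theorem-fcy} via Proposition~\ref{proposition-examples} and the twist formulas~\eqref{equation-divisor-twists}, then substitute $\rho=[2]$ and $\sigma=[\dim M+1]$ (using $\omega_M=\CL_M^{-m}$) from~\eqref{equation-divisor-rs} into $\SSS_{\CA_X}^{d/c}\cong\rho^{-m/c}\circ\sigma^{d/c}$. Your extra check of the boundary case $d=m$ via the remark after the Theorem is consistent with the paper and harmless.
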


\begin{corollary}\label{corollary-covering}
If $f: X \to M$ is as in Example~\ref{example-covering}, then $\SSS_{\CA_X}^{d/c} = \tau^{(m-d)/c} [(\dim M + 1)d/c - m/c]$.
\end{corollary}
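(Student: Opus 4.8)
The plan is to obtain Corollary~\ref{corollary-covering} as a direct specialization of Theorem~\ref{theorem-fcy}, feeding in the explicit values of the autoequivalences $\rho$ and $\sigma$ computed for the double covering in Proposition~\ref{proposition-examples}. The first step is to verify that the hypotheses of Theorem~\ref{theorem-fcy} hold for the spherical functor $\Phi = f_*$ of Example~\ref{example-covering}: Proposition~\ref{proposition-examples} already supplies that $f_*$ is spherical and that $T_X$ commutes with $\CL_X$, which is~\eqref{equation-tx-lx}; the intertwining~\eqref{intertwine} is the projection formula applied to $\CL_X = f^*\CL_M$; and~\eqref{equation-tm-cb} follows from the formula $T_M = \CL_M^{-d}[-1]$ of~\eqref{equation-covering-twists}, since a triangulated subcategory is closed under shifts, so that $T_M(\CB\otimes\CL_M^i) = \CB\otimes\CL_M^{i-d}[-1] = \CB\otimes\CL_M^{i-d}$. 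Throughout we work in the setting $\omega_M = \CL_M^{-m}$ imposed just before the statement, and we may assume $1 \le d < m$, the case $d = m$ being disposed of at the end.

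Next I would record, from~\eqref{equation-covering-rs} together with $\omega_M\otimes\CL_M^m \cong \CO_M$, the identifications
\begin{equation*}
\rho = \tau[1], \qquad \sigma = \tau[\dim M + 1],
\end{equation*}
where $\tau$ is the autoequivalence of $\BD(X)$ induced by the covering involution, satisfying $\tau^2 = \id$ and commuting with all shifts. Hence $\tau^{-1} = \tau$, and for every integer $k$ one has $\rho^{k} = \tau^{k}[k]$ and $\sigma^{k} = \tau^{k}[(\dim M+1)k]$; in particular $\rho^{-m/c} = \tau^{m/c}[-m/c]$.

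Then I would apply the conclusion of Theorem~\ref{theorem-fcy}, namely $\SSS_{\CA_X}^{d/c} \cong \rho^{-m/c}\circ\sigma^{d/c}$ with $c = \gcd(d,m)$ (so that $d/c$ and $m/c$ are integers), and substitute:
\begin{equation*}
\SSS_{\CA_X}^{d/c} \cong \tau^{m/c}[-m/c]\circ\tau^{d/c}[(\dim M+1)d/c] = \tau^{(m+d)/c}[(\dim M+1)d/c - m/c].
\end{equation*}
It then remains only to observe that $(m+d)/c$ and $(m-d)/c$ differ by the even integer $2d/c$, so that $\tau^{(m+d)/c} = \tau^{(m-d)/c}$ because $\tau^2 = \id$; this yields the asserted formula.

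Finally, for the excluded case $d = m$ one has $c = m$ and $\CA_X = \BD(X)$, and the Remark following Theorem~\ref{theorem-fcy} gives $\SSS_{\CA_X} = \SSS_X = \rho^{-1}\circ\sigma = \tau[-1]\circ\tau[\dim M+1] = [\dim M]$, which agrees with the stated formula read with $d = m$, $c = m$ (it then says $\tau^{0}[\dim M]$). I do not expect any genuine obstacle here: the only point demanding care is the parity bookkeeping for the power of $\tau$, and everything else is a mechanical substitution into the general formula of Theorem~\ref{theorem-fcy}.
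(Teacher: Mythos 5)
Your proposal is correct and follows essentially the same route as the paper, which obtains the corollary by substituting the expressions $\rho = \tau[1]$ and $\sigma = \tau[\dim M+1]$ from~\eqref{equation-covering-rs} (using $\omega_M = \CL_M^{-m}$) into the general formula $\SSS_{\CA_X}^{d/c} \cong \rho^{-m/c}\circ\sigma^{d/c}$ of Theorem~\ref{theorem-fcy}, the hypotheses having been checked exactly as you do via~\eqref{equation-covering-twists} and the projection formula. Your explicit justification that $\tau^{(m+d)/c} = \tau^{(m-d)/c}$ because $2d/c$ is even, and your treatment of the boundary case $d=m$ via the remark after the Theorem, are the only points the paper leaves implicit, and you handle both correctly.
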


\begin{corollary}\label{corollary-root}
If $f: X \to M$ is as in Example~\ref{example-root}, then 
$\SSS_{\CA_X}^{d/c} = \chi^{m/c}[(\dim M + 1)d/c - m/c]$.
\end{corollary}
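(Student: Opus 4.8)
The plan is to deduce this corollary directly from Theorem~\ref{theorem-fcy}, applied to the spherical functor $\Phi = f_*$ of Example~\ref{example-root}, by substituting into the abstract identity $\SSS_{\CA_X}^{d/c}\cong\rho^{-m/c}\circ\sigma^{d/c}$ the explicit descriptions of $\rho$ and $\sigma$ computed in Proposition~\ref{proposition-examples}. First I would check that the hypotheses of Theorem~\ref{theorem-fcy} are met. Proposition~\ref{proposition-examples} shows that $\Phi = f_*$ is spherical, with twists $T_M = \CL_M^{-d}\otimes\chi[-1]$ and $T_X = \CL_X^{-d}\otimes\chi[1]$ as in~\eqref{equation-root-twists}, where $\CL_X = f^*\CL_M$ and $\chi$ is the nontrivial character of $\mu_2$. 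The intertwining relation~\eqref{intertwine} is the projection formula together with $\CL_X = f^*\CL_M$; the commutation~\eqref{equation-tx-lx} holds because $T_X$ is built from a line-bundle twist, a character twist, and a shift, each of which commutes with $\CL_X$; and~\eqref{equation-tm-cb} follows from the formula for $T_M$ once one knows that $\CB\otimes\chi = \CB$, which is precisely the extra hypothesis (flagged right after the statement of Theorem~\ref{theorem-fcy}) that the Lefschetz decomposition~\eqref{lefy} is induced by a Lefschetz decomposition of $\BD(\widetilde M)$; I would include this assumption.

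Granting the hypotheses, Theorem~\ref{theorem-fcy} yields $\SSS_{\CA_X}^{d/c}\cong\rho^{-m/c}\circ\sigma^{d/c}$ with $c = \gcd(d,m)$. (In the boundary case $d = m$ the functor $\Phi^*$ is no longer fully faithful, but by the Remark following Theorem~\ref{theorem-fcy} the same identity holds with $\CA_X = \BD(X)$ and $c = m$, so the computation below is uniform in $1 \le d \le m$.) It then only remains to substitute. By~\eqref{equation-root-rs} we have $\rho = \chi[1]$, and, using the Calabi--Yau hypothesis $\omega_M = \CL_M^{-m}$ so that $f^*(\omega_M\otimes\CL_M^m) = \CO_X$, also $\sigma = [\dim M + 1]$. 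Since a shift commutes with any autoequivalence, $\rho$ and $\sigma$ commute, and one gets
\[
\SSS_{\CA_X}^{d/c}\cong\rho^{-m/c}\circ\sigma^{d/c} = (\chi[1])^{-m/c}\circ[\dim M+1]^{d/c} = \chi^{-m/c}\,[(\dim M+1)d/c - m/c].
\]
Finally $\chi^2 = \id$ gives $\chi^{-m/c} = \chi^{m/c}$, which is the asserted formula.

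There is no genuine obstacle here: all the work is already contained in Theorem~\ref{theorem-fcy} and in the twist computations of Proposition~\ref{proposition-examples}. The only points requiring attention are the $\mu_2$-invariance $\CB\otimes\chi=\CB$ needed for~\eqref{equation-tm-cb}, checking that $m/c$ and $d/c$ are integers so that the powers make sense, and remembering that $\chi$ is its own inverse so that the character appearing in the answer is $\chi^{m/c}$ rather than $\chi^{-m/c}$.
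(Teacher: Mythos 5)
Your proposal is correct and coincides with the paper's own argument: the corollaries are obtained precisely by verifying the hypotheses of Theorem~\ref{theorem-fcy} for Example~\ref{example-root} (including the extra condition $\CB\otimes\chi=\CB$ noted after the Theorem) and substituting the expressions $\rho=\chi[1]$ and $\sigma=f^*(\omega_M\otimes\CL_M^m)[\dim M+1]=[\dim M+1]$ from~\eqref{equation-root-rs} into $\SSS_{\CA_X}^{d/c}\cong\rho^{-m/c}\circ\sigma^{d/c}$, using $\chi^2=\id$. No gaps.
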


We do not know whether there are other examples of spherical functors for which the assumptions
of the Theorem are satisfied. Of course, it is tempting to replace the double cover example with 
a cyclic cover of arbitrary degree $k$, but the corresponding pushforward functor is not spherical,
so the Theorem does not apply in this case.
However, as Alex Perry notes, they are so-called $\PP^{k-1}$-functors, so it may well be that 
a generalization of our construction does something in this case as well.

\subsection{The induced semiorthogonal decomposition}\label{subsection-induced-sod}

We start with the first part of the Theorem (full faithfulness and a semiorthogonal decomposition).
This result in fact is quite simple. Moreover, for this to be true we do not need
to know that the Lefschetz collection in $\BD(Y)$ generates the whole category. So we state
here a slightly more general result.

\begin{lemma}\label{indlcx}
Assume that $\CB \subset \BD(M)$ is an admissible subcategory,
\begin{equation}\label{lcy}
\langle \CB, \CB\otimes\CL_M,\dots, \CB\otimes\CL_M^{m-1} \rangle \subset \BD(M)
\end{equation} 
is a rectangular Lefschetz collection and $\Phi:\BD(X) \to \BD(M)$ is a spherical functor
such that~\eqref{equation-tm-cb} and~\eqref{intertwine} hold. 
Then the functor $\Phi^*_{|\CB}:\CB \to \BD(X)$ is fully faithful and, denoting $\CB_X := \Phi^*(\CB)$,
the sequence of subcategories $\CB_{X}, \CB_{X}\otimes\CL_X, \dots, \CB_{X}\otimes \CL_X^{m-d-1}$
is semiorthogonal and extends to the semiorthogonal decomposition~\eqref{sodx} of $\BD(X)$.
\end{lemma}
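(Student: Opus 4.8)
The plan is to extract everything from the defining triangle of the spherical twist $T_M$ in \eqref{defty} together with the semiorthogonality of the Lefschetz collection \eqref{lcy}; note that, as promised, generation of $\BD(M)$ by \eqref{lcy} is nowhere used. For $F,G \in \CB$ the adjunction $\Phi^* \dashv \Phi$ identifies $\Hom(\Phi^*F,\Phi^*G[n])$ with $\Hom(F,\Phi\Phi^*G[n])$ in such a way that the natural map $\Hom(F,G[n]) \to \Hom(\Phi^*F,\Phi^*G[n])$ becomes post-composition with the unit $\eta_G: G \to \Phi\Phi^*G$. So I would apply $\Hom(F,-)$ to the triangle $T_M(G) \to G \xrightarrow{\,\eta_G\,} \Phi\Phi^*G$ of \eqref{defty}: the resulting long exact sequence shows this map is an isomorphism once $\Hom(F, T_M(G)[n]) = 0$ for all $n$. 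By \eqref{equation-tm-cb} with $i=0$ we have $T_M(G) \in \CB\otimes\CL_M^{-d}$, and, since $1 \le d \le m-1$, twisting \eqref{lcy} appropriately gives $\Hom(\CB,\CB\otimes\CL_M^{-d}) = 0$ in all degrees. This proves full faithfulness of $\Phi^*_{|\CB}$.

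For semiorthogonality of $\CB_X, \CB_X\otimes\CL_X, \dots, \CB_X\otimes\CL_X^{m-d-1}$ I would first pass to left adjoints in \eqref{intertwine} to get $\Phi^*\circ\CL_M \cong \CL_X\circ\Phi^*$, so that $\Phi^*(F\otimes\CL_M^k) \cong \Phi^*(F)\otimes\CL_X^k$. The condition to be checked is $\Hom(\CB_X\otimes\CL_X^k, \CB_X[n]) = 0$ for $1 \le k \le m-d-1$ and all $n$; by the above and adjunction this equals $\Hom(F\otimes\CL_M^k,\Phi\Phi^*G[n])$ with $F,G\in\CB$, and feeding the triangle of \eqref{defty} into $\Hom(F\otimes\CL_M^k,-)$ reduces the problem to the vanishing of $\Hom(\CB\otimes\CL_M^k,\CB[n])$ and of $\Hom(\CB\otimes\CL_M^{k+d},\CB[n])$. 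For $d \ge 1$ and $1 \le k \le m-d-1$ both exponents $k$ and $k+d$ lie in $\{1,\dots,m-1\}$, so both groups vanish by semiorthogonality of \eqref{lcy}.

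Finally, to upgrade the semiorthogonal collection to the decomposition \eqref{sodx} I would invoke the extension principle recalled in Section~\ref{subsection-sod}: it suffices that each $\CB_X\otimes\CL_X^i$ be admissible, and then $\CA_X := \langle \CB_X, \dots, \CB_X\otimes\CL_X^{m-d-1}\rangle^\perp$ completes the decomposition. Since $\Phi$ is a Fourier--Mukai functor, so is $\Phi^*$, hence $\Phi^*$ has both adjoints; composing with the embedding $\beta:\CB\to\BD(M)$, which has both adjoints by admissibility of $\CB$, gives a functor $\Phi^*\circ\beta: \CB\to\BD(X)$ with both adjoints, which is fully faithful by the first step, so its image $\CB_X$ is admissible; twisting by the autoequivalence $\CL_X^i$ preserves admissibility.

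The computations here are routine; the one place where attention is required is the bookkeeping of twist exponents, so that all the $\Hom$-groups arising land in the vanishing window $\{1,\dots,m-1\}$ of \eqref{lcy} --- which is exactly where the hypothesis $d < m$ enters --- and the observation that the isomorphism used in the full faithfulness step is the one induced by the unit $\eta$, so that it genuinely computes the action of $\Phi^*$ on morphisms rather than merely an abstract identification of groups.
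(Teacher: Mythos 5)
Your proof is correct and takes essentially the same route as the paper: both arguments rest on the defining triangle \eqref{defty} of $T_M$, the condition \eqref{equation-tm-cb}, the intertwining \eqref{intertwine}, and twisted semiorthogonality of \eqref{lcy}, the only difference being that the paper phrases everything at the level of functorial triangles composed with $\beta_M$ and $\beta_M^!$ (showing $\beta_M^!\circ T_M\circ\beta_M$ and $\beta_M^!\circ\CL_M^{-i}\circ T_M\circ\beta_M$ vanish) rather than via object-wise $\Hom$ long exact sequences. For the final step the paper only records right admissibility of $\CB_X$ through the right adjoint $\beta_M^!\circ\Phi$, which already suffices for the decomposition \eqref{sodx}; your stronger observation that $\Phi^*\circ\beta_M$ has both adjoints (hence $\CB_X$ is admissible) is also fine.
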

\begin{proof}
Denote the embedding functor $\CB \to \BD(M)$ by $\beta_M$. The category $\CB$ is admissible, hence 
$\beta_M$ has a right adjoint which we denote by $\beta_M^!:\BD(M) \to \CB$. 
Therefore the functor $\Phi^*\circ\beta_M:\CB \to \BD(X)$ also has a right adjoint $\beta_M^!\circ \Phi$. We want to show that the composition 
$\beta_M^!\circ \Phi \circ \Phi^* \circ \beta_M$ is the identity. For this we compose~\eqref{defty} with $\beta_M^!$ on the left and with~$\beta_M$ on the right:
\begin{equation*}
\beta_M^!\circ T_M \circ\beta_M \to \beta_M^!\circ \beta_M \to \beta_M^!\circ \Phi \circ \Phi^* \circ \beta_M.
\end{equation*}
Note that the functor in the middle is the identity of $\CB$ (since $\beta_M$ is fully faithful), so it is enough to check
that the functor on the left is zero. As the kernel of $\beta_M^!$ is the orthogonal ${\CB}^\perp$, it is enough to check 
that the image of $T_M \circ \beta_M$ is contained in this subcategory. But this image is $T_M(\CB)$ and by~\eqref{equation-tm-cb} 
it is in $\CB \otimes\CL_M^{-d} \subset \CB^\perp$ by the twist
\begin{equation*}
\langle \CB\otimes\CL_M^{1-m}, \dots, \CB\otimes\CL_M^{-1}, \CB \rangle
\end{equation*}
of~\eqref{lcy} as $1 \le d \le m-1$.

For the semiorthogonality we have to check that the composition of functors $\beta_M^!\circ \Phi \circ \CL_X^{-i} \circ \Phi^* \circ \beta_M$ 
is zero for each $1 \le i \le m-d-1$. For this we use the intertwining property~\eqref{intertwine} and rewrite this composition as 
$\beta_M^!\circ \CL_M^{-i} \circ \Phi \circ \Phi^* \circ \beta_M$. Then we compose~\eqref{defty} with $\beta_M^! \circ \CL_M^{-i}$ on the left and with~$\beta_M$ on the right:
\begin{equation*}
\beta_M^! \circ \CL_M^{-i} \circ T_M \circ \beta_M \to \beta_M^! \circ \CL_M^{-i} \circ \beta_M \to \beta_M^! \circ \CL_M^{-i} \circ \Phi \circ \Phi^* \circ \beta_M.
\end{equation*}
Clearly, $\Im(\CL_M^{-i}\circ\beta_M) = \CB \otimes \CL_M^{-i}$ and $\Im(\CL_M^{-i} \circ T_M \circ \beta_M) = \CB \otimes \CL_M^{-i-d}$,
so as both these categories are in~$\CB^\perp$, they are killed by $\beta_M^!$, hence the first two terms of the triangle are zero.
Hence so is the third.
As we already have checked the embedding functor of $\CB_X$ has a right adjoint, the subcategory is right admissible
and thus gives the required semiorthogonal decomposition.
\end{proof}

In what follows we denote by $\beta_X:\CB \to \BD(X)$ and $\beta_X^!:\BD(X) \to \CB$ the fully faithful
embedding constructed in Lemma~\ref{indlcx} and its right adjoint functor, so that
\begin{equation}\label{equation-beta-x-beta-m}
\beta_X = \Phi^* \circ \beta_M,
\qquad
\beta_X^! = \beta_M^! \circ \Phi,
\end{equation} 
and consider the constructed Lefschetz collection
\begin{equation}\label{lefx}
\langle \CB_{X}, \CB_{X}\otimes\CL_X, \dots, \CB_{X}\otimes \CL_X^{m-d-1} \rangle \subset \BD(X).
\end{equation}
Further we will need the following

\begin{lemma}\label{lemma-sigma-bx}
For the functors $\rho$ and $\sigma$ we have
\begin{equation*}
\rho \circ \Phi^* \cong \Phi^* \circ T_M \circ \CL_M^d[2]
\qquad\text{and}\qquad 
\sigma \circ \Phi^* = \Phi^*\circ\CL_M^m\circ\SSS_M[1].
\end{equation*}
In particular, all components of~\eqref{lefx} are preserved by $\rho$ and $\sigma$.
\end{lemma}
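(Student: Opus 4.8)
The plan is to verify both isomorphisms by moving the functors past $\Phi^*$ one factor at a time, using only formal consequences of adjunction: the intertwining relation~\eqref{intertwine}, Corollary~\ref{corollary-t-phi}, and Lemma~\ref{lemma-serre-basics}. Two preliminary observations are needed. First, passing to left adjoints in~\eqref{intertwine} and using that tensoring by a line bundle is invertible gives $\CL_X^i\circ\Phi^*\cong\Phi^*\circ\CL_M^i$ for every $i\in\ZZ$. Second, as established already in the proof of Corollary~\ref{corollary-t-phi} one has $\Phi^*[-1]\cong\Phi^!\circ T_M$, hence $\Phi^!\cong\Phi^*\circ T_M^{-1}[-1]$.

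For the first identity I compute $\rho\circ\Phi^* = T_X\circ\CL_X^d\circ\Phi^*\cong T_X\circ\Phi^*\circ\CL_M^d\cong\Phi^*\circ T_M\circ\CL_M^d[2]$, the last step being the second isomorphism of Corollary~\ref{corollary-t-phi} together with the fact that shifts commute with every functor. For the second, in the same spirit $\sigma\circ\Phi^* = \SSS_X\circ T_X\circ\CL_X^m\circ\Phi^*\cong\SSS_X\circ\Phi^*\circ T_M\circ\CL_M^m[2]$; now Lemma~\ref{lemma-serre-basics}(i) rewrites $\SSS_X\circ\Phi^*$ as $\Phi^!\circ\SSS_M$, and substituting $\Phi^!\cong\Phi^*\circ T_M^{-1}[-1]$ yields $\sigma\circ\Phi^*\cong\Phi^*\circ T_M^{-1}\circ\SSS_M\circ T_M\circ\CL_M^m[1]$. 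Since $\SSS_M$ commutes with the autoequivalence $T_M$ (Lemma~\ref{lemma-serre-basics}(ii)) we have $T_M^{-1}\circ\SSS_M\circ T_M\cong\SSS_M$, and moving $\SSS_M$ past $\CL_M^m$ gives $\sigma\circ\Phi^*\cong\Phi^*\circ\CL_M^m\circ\SSS_M[1]$. The only place requiring attention is the interchange of $\Phi^!$ and $\Phi^*$ together with the bookkeeping of the shift; the rest is routine manipulation of adjunction isomorphisms.

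For the final assertion I note that each component of~\eqref{lefx} equals $\CB_X\otimes\CL_X^i = \Phi^*(\CB\otimes\CL_M^i)$ by the adjoint intertwining, and that $\rho$ and $\sigma$ both commute with $\CL_X$ — for $\rho$ this is~\eqref{equation-tx-lx}, for $\sigma$ one combines~\eqref{equation-tx-lx} with Lemma~\ref{lemma-serre-basics}(ii). Hence it is enough to check $\rho(\CB_X) = \CB_X$ and $\sigma(\CB_X) = \CB_X$. Applying the two formulas just proved to the subcategory $\CB$ and using $T_M(\CB\otimes\CL_M^d) = \CB$ (hypothesis~\eqref{equation-tm-cb}) for $\rho$, respectively $\SSS_M(\CB) = \CB\otimes\CL_M^{-m}$ (Lemma~\ref{lemma:serre-lefschetz}) for $\sigma$, both right-hand sides collapse to $\Phi^*(\CB) = \CB_X$, which is the definition of $\CB_X$ from Lemma~\ref{indlcx}. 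I do not expect any real difficulty here beyond keeping the indices straight.
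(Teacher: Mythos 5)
Your proposal is correct and follows essentially the same route as the paper: both identities are obtained by pushing $\CL_X$, $T_X$, and $\SSS_X$ past $\Phi^*$ via \eqref{intertwine}, Corollary~\ref{corollary-t-phi} (in particular $\Phi^*[-1]\cong\Phi^!\circ T_M$), and Lemma~\ref{lemma-serre-basics}, and the preservation of the components is deduced exactly as in the paper from \eqref{equation-tm-cb}, Lemma~\ref{lemma:serre-lefschetz}, and commutation with $\CL_X$. The only difference is a harmless reordering in the second computation (you convert $\SSS_X\circ\Phi^*$ to $\Phi^!\circ\SSS_M$ first and then commute $\SSS_M$ past $T_M$, where the paper conjugates by $\SSS_M$ before collapsing to $\Phi^!$).
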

\begin{proof}
The first equality follows from the definition of $\rho$, assumption~\eqref{intertwine} and Corollary~\ref{corollary-t-phi}.
The second is checked similarly:
\begin{multline*}
\sigma \circ \Phi^* =
\SSS_X \circ T_X \circ \CL_X^m \circ \Phi^* \cong
\SSS_X \circ \Phi^* \circ T_M \circ \CL_M^m [2] \cong
\\ \cong
\SSS_X \circ \Phi^* \circ \SSS_M^{-1} \circ T_M \circ \CL_M^m \circ \SSS_M [2] \cong
\Phi^! \circ T_M \circ \CL_M^m \circ \SSS_M [2] \cong
\Phi^* \circ \CL_M^m \circ \SSS_M [1],
\end{multline*}
the first is the definition of $\sigma$, 
the second is~\eqref{intertwine} and Corollary~\ref{corollary-t-phi},
the third and the fourth is Lemma~\ref{lemma-serre-basics},
and the last is Corollary~\ref{corollary-t-phi} again. It remains to note that by~\eqref{equation-tm-cb}
\begin{equation*}
\rho(\CB_X) = 
(\rho \circ \Phi^*)(\CB) =
(\Phi^* \circ T_M \circ \CL_M^d)(\CB) = 
\Phi^*(\CB) =
\CB_X,
\end{equation*}
so $\rho$ preserves $\CB_X$. Since $\rho$ commutes with $\CL_X$ by~\eqref{equation-tx-lx}, it also preserves all the other components of~\eqref{lefx}.
An analogous argument (with Lemma~\ref{lemma:serre-lefschetz} used
instead of~\eqref{equation-tm-cb}) works for $\sigma$ 
(note that $\sigma$ commutes with~$\CL_X$ by~\eqref{equation-tx-lx} and Lemma~\ref{lemma-serre-basics}).
\end{proof}

We denote by $\CA_X$ the orthogonal of the collection~\eqref{lefx}:
\begin{equation}\label{defax}
\CA_X := \langle \CB_{X}, \CB_{X}\otimes\CL_X, \dots, \CB_{X}\otimes \CL_X^{m-d-1} \rangle^\perp \subset \BD(X).
\end{equation}
This gives the required semiorthogonal decomposition~\eqref{sodx}. It follows also that the category $\CA_X$ is preserved by $\rho$ and $\sigma$.
Sometimes the following alternative description of $\CA_X$ is useful.

\begin{lemma}\label{cata}
Let $\CA_X \subset \BD(X)$ be the subcategory defined by~\eqref{defax}. Then
\begin{equation*}
\CA_X = \{ F \in \BD(X)\ |\ \Phi(F) \in \langle \CB\otimes \CL_M^{-d}, \dots, \CB \otimes \CL_M^{-1} \rangle \subset \BD(M) \}.
\end{equation*}
\end{lemma}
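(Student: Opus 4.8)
The plan is to unwind the definition of $\CA_X$ as the orthogonal to the Lefschetz collection \eqref{lefx} and translate each orthogonality condition, via adjunction, into a condition on $\Phi(F)$. Recall that by Lemma \ref{indlcx} the category $\CB_X$ embeds via $\beta_X = \Phi^*\circ\beta_M$, so the $i$-th component of \eqref{lefx} is $\CB_X\otimes\CL_X^i = (\CL_X^i\circ\Phi^*)(\CB)$ for $0 \le i \le m-d-1$. Thus $F \in \CA_X$ if and only if $\Hom((\CL_X^i\circ\Phi^*)(\CB), F) = 0$ for all such $i$.

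First I would rewrite each such Hom-space. Using $\CL_X^i\circ\Phi^* \cong \Phi^*\circ\CL_M^i$ (the left adjoint form of \eqref{intertwine}, which follows from Corollary~\ref{corollary-t-phi}'s circle of ideas, or more simply by passing to left adjoints in \eqref{intertwine}), and then the adjunction between $\Phi^*$ and $\Phi$, we get
\begin{equation*}
\Hom((\CL_X^i\circ\Phi^*)(\CB),F) = \Hom(\Phi^*(\CB\otimes\CL_M^i),F) = \Hom(\CB\otimes\CL_M^i,\Phi(F)).
\end{equation*}
Hence $F \in \CA_X$ if and only if $\Phi(F)$ lies in the orthogonal $\langle \CB,\CB\otimes\CL_M,\dots,\CB\otimes\CL_M^{m-d-1}\rangle^\perp$ inside $\BD(M)$. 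Now I invoke the rectangular Lefschetz decomposition \eqref{lefy}: tensoring \eqref{lefy} by $\CL_M^{-d}$ gives
\begin{equation*}
\BD(M) = \langle \CB\otimes\CL_M^{-d}, \CB\otimes\CL_M^{1-d},\dots,\CB\otimes\CL_M^{m-1-d}\rangle,
\end{equation*}
and the last $m-d$ components of this decomposition are exactly $\CB,\CB\otimes\CL_M,\dots,\CB\otimes\CL_M^{m-d-1}$ (since the exponents $0,1,\dots,m-d-1$ all appear). Therefore the orthogonal to these $m-d$ components is precisely the span of the remaining (first) $d$ components, namely $\langle \CB\otimes\CL_M^{-d},\dots,\CB\otimes\CL_M^{-1}\rangle$. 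This yields exactly the claimed description of $\CA_X$.

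The only genuine point to be careful about is the identification $\CL_X^i\circ\Phi^*\cong\Phi^*\circ\CL_M^i$: this is the statement that the left adjoint of $\CL_M\circ\Phi$ equals the left adjoint of $\Phi\circ\CL_X$, which follows immediately from \eqref{intertwine} together with the fact that $\CL_M$ and $\CL_X$ are autoequivalences (so their own inverses are their adjoints). Everything else is a formal chain of adjunctions plus the bookkeeping of exponents in the rectangular decomposition, so I expect no serious obstacle; the main thing is to get the range of twists right, which is why the hypothesis $1 \le d \le m-1$ (ensuring the $m-d$ "positive" components and $d$ "negative" components partition the full decomposition after the shift by $\CL_M^{-d}$) is used.
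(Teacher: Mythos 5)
Your argument is correct and is essentially the paper's own proof: unwind the orthogonality conditions defining $\CA_X$, use the adjunction $\Hom(\Phi^*(\CB\otimes\CL_M^i),F)=\Hom(\CB\otimes\CL_M^i,\Phi(F))$, and identify the orthogonal via the twist of \eqref{lefy} by $\CL_M^{-d}$. Your extra care in justifying $\CL_X^i\circ\Phi^*\cong\Phi^*\circ\CL_M^i$ by passing to left adjoints in \eqref{intertwine} is a detail the paper leaves implicit, but the route is the same.
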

\begin{proof}
By definition we have
\begin{equation*}
\CA_X = \{ F \in \BD(X)\ |\ \Hom(\Phi^*(\CB),F) = \dots = \Hom(\Phi^*(\CB\otimes\CL_M^{m-d-1}),F) = 0 \}.
\end{equation*}
By adjunction this can be rewritten as
\begin{equation*}
\CA_X = \{ F \in \BD(X)\ |\ \Hom(\CB,\Phi(F)) = \dots = \Hom(\CB\otimes\CL_M^{m-d-1},\Phi(F)) = 0 \}.
\end{equation*}
So, the result follows from the twist $\BD(M) = \langle \CB\otimes\CL_M^{-d}, \dots, \CB\otimes \CL_M^{-1}, \CB, \dots, \CB\otimes\CL_M^{m-d-1} \rangle$
of~\eqref{lefy}. 
\end{proof}

\subsection{Rotation functors}\label{subsection-rotation}

Now we already have proved the first part of the Theorem, so it remains to compute the Serre functor.
The main instruments for this are rotation functors.

In general, a {\sf rotation functor}
can be defined in a presence of a rectangular Lefschetz collection 
\begin{equation*}
\langle \CB, \CB\otimes\CL_Y,\dots, \CB\otimes\CL_Y^{s-1} \rangle \subset \BD(Y)
\end{equation*} 
on a smooth projective variety (or a stack) $Y$. It is defined
as the composition of the twist and the left mutation functors:
\begin{equation}\label{roty}
\SO_\CB := \LL_\CB \circ \CL_Y.
\end{equation}
The following straightforward observation is quite useful.

\begin{lemma}\label{rotation-power}
If 
$\langle \CB, \CB\otimes\CL_Y,\dots, \CB\otimes\CL_Y^{s-1} \rangle \subset \BD(Y)$
is a rectangular Lefschetz collection and $\SO_\CB$
is the corresponding rotation functor,
then
\begin{equation*}
(\SO_{\CB})^i = \LL_{\langle \CB,\CB\otimes\CL_Y,\dots,\CB\otimes\CL_Y^{i-1} \rangle} \circ \CL_Y^i
\end{equation*}
for any $0 \le i \le s$.
\end{lemma}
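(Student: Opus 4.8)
The plan is to prove the formula $(\SO_{\CB})^i = \LL_{\langle \CB,\CB\otimes\CL_Y,\dots,\CB\otimes\CL_Y^{i-1}\rangle}\circ\CL_Y^i$ by induction on $i$, using Lemma~\ref{lemma-mutations-basics} to decompose the left mutation through a chain of subcategories and to conjugate a mutation functor by an autoequivalence. The base case $i=0$ is trivial (both sides are the identity), and $i=1$ is the definition~\eqref{roty} of $\SO_\CB$.

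For the inductive step, suppose the formula holds for some $i$ with $1 \le i \le s-1$. Then I would write
\begin{equation*}
(\SO_\CB)^{i+1} = \SO_\CB\circ(\SO_\CB)^i = \LL_\CB\circ\CL_Y\circ\LL_{\langle\CB,\dots,\CB\otimes\CL_Y^{i-1}\rangle}\circ\CL_Y^i.
\end{equation*}
The key move is to commute $\CL_Y$ past the mutation functor: by Lemma~\ref{lemma-mutations-basics}(ii) applied with $\xi = \CL_Y$ we have $\CL_Y\circ\LL_{\langle\CB,\dots,\CB\otimes\CL_Y^{i-1}\rangle} = \LL_{\CL_Y(\langle\CB,\dots,\CB\otimes\CL_Y^{i-1}\rangle)}\circ\CL_Y = \LL_{\langle\CB\otimes\CL_Y,\dots,\CB\otimes\CL_Y^{i}\rangle}\circ\CL_Y$. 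Substituting this in gives
\begin{equation*}
(\SO_\CB)^{i+1} = \LL_\CB\circ\LL_{\langle\CB\otimes\CL_Y,\dots,\CB\otimes\CL_Y^{i}\rangle}\circ\CL_Y^{i+1}.
\end{equation*}
Finally, by Lemma~\ref{lemma-mutations-basics}(i), the composition $\LL_\CB\circ\LL_{\langle\CB\otimes\CL_Y,\dots,\CB\otimes\CL_Y^{i}\rangle}$ is exactly the left mutation $\LL_{\langle\CB,\CB\otimes\CL_Y,\dots,\CB\otimes\CL_Y^{i}\rangle}$ through the semiorthogonal decomposition of that subcategory into its $i+1$ pieces, which is precisely what is needed.

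The only point requiring care is that all the subcategories appearing are admissible and that the displayed sequences really are semiorthogonal decompositions of the subcategories they generate, so that Lemma~\ref{lemma-mutations-basics}(i) applies — but this is immediate since $\langle\CB,\CB\otimes\CL_Y,\dots,\CB\otimes\CL_Y^{s-1}\rangle$ is assumed to be a rectangular Lefschetz collection inside $\BD(Y)$, hence any initial or twisted segment of it is semiorthogonal with admissible terms, and the range $0\le i\le s$ guarantees we never leave this collection. **The main obstacle**, such as it is, is simply keeping track of which twist of the Lefschetz collection one is mutating through at each stage; there is no real difficulty, which is why the lemma is called "straightforward."
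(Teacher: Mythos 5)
Your proof is correct and uses exactly the same ingredients as the paper: Lemma~\ref{lemma-mutations-basics}(ii) to conjugate mutations by powers of $\CL_Y$ and Lemma~\ref{lemma-mutations-basics}(i) to assemble the individual mutations into $\LL_{\langle \CB,\dots,\CB\otimes\CL_Y^{i-1}\rangle}$; the paper merely carries out the telescoping in one direct computation rather than by induction. The repackaging as an induction is an organizational difference only, not a different argument.
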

\begin{proof}
By Lemma~\ref{lemma-mutations-basics} we have
\begin{multline*}
(\SO_\CB)^i = 
\SO_\CB \circ \SO_\CB \circ \dots \circ \SO_\CB =
(\LL_\CB \circ \CL_Y) \circ (\LL_\CB \circ \CL_Y) \circ \dots \circ (\LL_\CB \circ \CL_Y) = \\
\LL_\CB \circ (\CL_Y \circ \LL_\CB \circ \CL_Y^{-1}) \circ (\CL_Y^2 \circ \LL_\CB \circ \CL_Y^{-2} ) \circ \dots \circ (\CL_Y^{i-1} \circ \LL_\CB \circ \CL_Y^{1-i}) \circ \CL_Y^i = \\
\LL_\CB \circ \LL_{\CB \otimes \CL_Y} \circ \LL_{\CB \otimes \CL_Y^2} \circ \dots \circ \LL_{\CB \otimes \CL_Y^{i-1}} \circ \CL_Y^i =
\LL_{\langle \CB, \CB\otimes \CL_Y, \dots, \CB \otimes \CL_Y^{i-1} \rangle} \circ \CL_Y^i,
\end{multline*}
and we are done.
\end{proof}

In what follows we will consider two rectangular Lefschetz collections: the first is~\eqref{lefy} generating $\BD(M)$,
and the second is~\eqref{lefx} (which is nonfull). We denote the corresponding rotation functors by $\SO_M$ and $\SO_X$.
So, by definition of mutation functors we have the following distinguished triangles:
\begin{equation}\label{def-om}
\beta_M\beta_M^!\CL_M \to \CL_M \to \SO_M
\end{equation} 
and
\begin{equation}\label{def-ox}
\beta_X\beta_X^!\CL_X \to \CL_X \to \SO_X.
\end{equation} 
It is easy to see that the functor $\SO_M$ is nilpotent.

\begin{corollary}\label{corollary-so-i-zero}
For each $0 \le i \le m$ the $i$-th power $\SO^i_M$ of the rotation functor vanishes on the subcategory $\langle \CB \otimes \CL_M^{-i}, \dots, \CB \otimes \CL_M^{-1} \rangle \subset \BD(M)$.
In particular, the $m$-th power of $\SO_M$ vanishes identically.
\end{corollary}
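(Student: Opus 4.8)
The plan is to use Lemma~\ref{rotation-power} together with the twisted form of the Lefschetz collection~\eqref{lefy}. First I would rewrite the $i$-th power of the rotation functor as
\begin{equation*}
\SO_M^i = \LL_{\langle \CB, \CB\otimes\CL_M, \dots, \CB\otimes\CL_M^{i-1}\rangle} \circ \CL_M^i,
\end{equation*}
which is valid for $0 \le i \le m$ by Lemma~\ref{rotation-power}. Applying this to an object $F$ in the subcategory $\langle \CB \otimes \CL_M^{-i}, \dots, \CB \otimes \CL_M^{-1}\rangle$, the first functor to act is $\CL_M^i$, which sends this subcategory onto $\langle \CB, \CB\otimes\CL_M, \dots, \CB\otimes\CL_M^{i-1}\rangle$. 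Then the left mutation through precisely this subcategory kills everything in it, since $\LL_{\CC}$ vanishes on $\CC$ for any admissible $\CC$ (this is immediate from the defining triangle~\eqref{def-left-mut}: on objects of $\CC$ the counit $\beta\beta^! \to \id$ is an isomorphism, so the cone is zero). Hence $\SO_M^i$ vanishes on $\langle \CB \otimes \CL_M^{-i}, \dots, \CB \otimes \CL_M^{-1}\rangle$.

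For the ``in particular'' clause, I would take $i = m$. Then $\SO_M^m = \LL_{\langle \CB, \dots, \CB\otimes\CL_M^{m-1}\rangle} \circ \CL_M^m$, but by the rectangular Lefschetz decomposition~\eqref{lefy} the subcategory $\langle \CB, \CB\otimes\CL_M, \dots, \CB\otimes\CL_M^{m-1}\rangle$ is all of $\BD(M)$, so the left mutation through it is the zero functor on the whole category. Composing the zero functor with $\CL_M^m$ still gives zero, so $\SO_M^m = 0$ identically.

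There is essentially no obstacle here; the statement is a direct corollary of Lemma~\ref{rotation-power}. The only point requiring a word of care is the observation that $\LL_\CC$ annihilates $\CC$, which I would state explicitly since it is used twice. One should also note that the subcategories $\langle \CB, \CB\otimes\CL_M, \dots, \CB\otimes\CL_M^{i-1}\rangle$ appearing as mutation targets are admissible for each $i \le m$ (being generated by admissible pieces of a semiorthogonal decomposition), so that the mutation functors are well-defined; this is already implicit in the setup of Section~\ref{subsection-sod}.
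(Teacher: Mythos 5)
Your argument is correct and is essentially identical to the paper's proof: both rewrite $\SO_M^i$ via Lemma~\ref{rotation-power}, observe that the twist $\CL_M^i$ carries $\langle \CB \otimes \CL_M^{-i}, \dots, \CB \otimes \CL_M^{-1}\rangle$ onto $\langle \CB, \dots, \CB\otimes\CL_M^{i-1}\rangle$, which the left mutation annihilates, and handle $i=m$ by noting the relevant subcategory is all of $\BD(M)$ by the (twisted) decomposition~\eqref{lefy}. Your explicit remark that $\LL_\CC$ kills $\CC$ is the same standard fact the paper uses implicitly.
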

\begin{proof}
Indeed, the twist by $\CL_M^i$ takes the subcategory $\langle \CB \otimes \CL_M^{-i}, \dots, \CB \otimes \CL_M^{-1} \rangle$ 
to the subcategory $\langle \CB,\dots,\CB\otimes\CL_M^{i-1} \rangle$,
which is killed by the mutation functor $\LL_{\langle \CB,\dots,\CB\otimes\CL_M^{i-1} \rangle}$. 
Finally, for $i = m$ the subcategory $\langle \CB \otimes \CL_M^{-m}, \dots, \CB \otimes \CL_M^{-1} \rangle \subset \BD(M)$
equals $\BD(M)$ by~\eqref{lefy}.
\end{proof}

It is also easy to see that the functor $\SO_X$ commutes with $\rho$ and $\sigma$:

\begin{lemma}\label{lemma-sigma-ox}
We have $\rho \circ \SO_X \cong \SO_X \circ \rho$ and $\sigma \circ \SO_X \cong \SO_X \circ \sigma$.
\end{lemma}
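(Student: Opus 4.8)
The plan is to show that both $\rho$ and $\sigma$ commute with $\SO_X = \LL_{\CB_X}\circ\CL_X$ by combining the two pieces: commutation with the twist $\CL_X$ and commutation with the left mutation $\LL_{\CB_X}$. Commutation with $\CL_X$ is already available: $\rho$ commutes with $\CL_X$ by hypothesis~\eqref{equation-tx-lx}, and $\sigma$ commutes with $\CL_X$ by~\eqref{equation-tx-lx} together with Lemma~\ref{lemma-serre-basics}(ii) (the Serre functor commutes with autoequivalences). So the whole content is to prove that $\rho$ and $\sigma$ commute with $\LL_{\CB_X}$.

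For that I would invoke Lemma~\ref{lemma-mutations-basics}(ii), which says that for an autoequivalence $\xi$ one has $\xi\circ\LL_\CC\circ\xi^{-1} = \LL_{\xi(\CC)}$. Applying this with $\xi = \rho$ (resp.\ $\xi=\sigma$) and $\CC = \CB_X$, it suffices to check that $\rho(\CB_X) = \CB_X$ and $\sigma(\CB_X) = \CB_X$. But this was exactly verified in the proof of Lemma~\ref{lemma-sigma-bx}: there we showed $\rho\circ\Phi^* \cong \Phi^*\circ T_M\circ\CL_M^d[2]$ and used~\eqref{equation-tm-cb} to get $\rho(\CB_X) = (\Phi^*\circ T_M\circ\CL_M^d)(\CB) = \Phi^*(\CB) = \CB_X$; similarly $\sigma\circ\Phi^* \cong \Phi^*\circ\CL_M^m\circ\SSS_M[1]$ and Lemma~\ref{lemma:serre-lefschetz} gives $\SSS_M(\CL_M^m\otimes\CB) = \CB$, so $\sigma(\CB_X)=\CB_X$. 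Hence $\rho\circ\LL_{\CB_X}\circ\rho^{-1} = \LL_{\rho(\CB_X)} = \LL_{\CB_X}$ and likewise for $\sigma$.

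Combining the two: $\rho\circ\SO_X = \rho\circ\LL_{\CB_X}\circ\CL_X = \LL_{\CB_X}\circ\rho\circ\CL_X = \LL_{\CB_X}\circ\CL_X\circ\rho = \SO_X\circ\rho$, where the middle step uses $\rho\circ\LL_{\CB_X} = \LL_{\CB_X}\circ\rho$ (just established) and the third uses~\eqref{equation-tx-lx}. The argument for $\sigma$ is identical, using instead the commutation of $\sigma$ with $\CL_X$ noted above. I do not expect any real obstacle here — everything needed is already packaged in Lemma~\ref{lemma-mutations-basics}, Lemma~\ref{lemma-sigma-bx}, and the hypotheses; the only point requiring a moment's care is making sure the "preserves $\CB_X$'' statements from Lemma~\ref{lemma-sigma-bx} are genuinely equalities of subcategories (not merely containments), which they are since $\rho$ and $\sigma$ are autoequivalences.
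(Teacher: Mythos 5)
Your proof is correct and follows essentially the same route as the paper: decompose $\SO_X = \LL_{\CB_X}\circ\CL_X$, use~\eqref{equation-tx-lx} and Lemma~\ref{lemma-serre-basics} for commutation with $\CL_X$, and the fact from Lemma~\ref{lemma-sigma-bx} that $\rho$ and $\sigma$ preserve $\CB_X$ (via Lemma~\ref{lemma-mutations-basics}(ii)) for commutation with $\LL_{\CB_X}$. You merely spell out the conjugation step that the paper leaves implicit.
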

\begin{proof}
Indeed, $\SO_X$ is the composition of $\CL_X$ with $\LL_{\CB_X}$. But $\CL_X$ commutes with $\rho$ and $\sigma$ by~\eqref{equation-tx-lx}
and Lemma~\ref{lemma-serre-basics}, and $\LL_{\CB_X}$ commutes with $\rho$ and $\sigma$ by Lemma~\ref{lemma-sigma-bx}.
\end{proof}

\subsection{The fundamental relation}\label{subsection-relation}

In a contrast to the nilpotency of $\SO_M$, the functor $\SO_X$ induces an autoequivalence of the subcategory $\CA_X$.
Moreover, its $d$-th power coincides on $\CA_X$ with the autoequivalence~$\rho$.
This follows from a careful investigation of the relation between the rotation functors $\SO_M$ and~$\SO_X$,
and in the end leads to the proof of the Theorem.

\begin{lemma}\label{lemma-phi-o-intertwine}
For any $0 \le i \le d-1$ there is a morphism of functors $\Phi^*\circ \SO_M^i \xrightarrow{\ \gamma^i\ } \SO_X^i \circ \Phi^*$ inducing an isomorphism 
\begin{equation*}
\Phi^*\circ \SO_M^i \cong \SO_X^i \circ \Phi^*
\end{equation*}
on the subcategory $\langle \CB \otimes \CL_M^{d-i}, \dots, \CB \otimes \CL_M^{d-1} \rangle^\perp = 
\langle \CB \otimes \CL_M^{d-m},\CB \otimes \CL_M^{d+1-m},\dots,\CB \otimes \CL_M^{d-i-1} \rangle \subset \BD(M)$.
\end{lemma}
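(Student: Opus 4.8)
The plan is to construct the natural transformation $\gamma^i$ and establish the isomorphism by induction on $i$, using the definitions of the rotation functors as mutations composed with line bundle twists together with the intertwining property \eqref{intertwine}. The base case $i=0$ is trivial with $\gamma^0=\id$. For the inductive step, I would write $\SO_M^{i+1} = \SO_M \circ \SO_M^i = \LL_\CB \circ \CL_M \circ \SO_M^i$ and similarly $\SO_X^{i+1} = \LL_{\CB_X} \circ \CL_X \circ \SO_X^i$, and combine the inductive transformation $\gamma^i : \Phi^*\circ\SO_M^i \to \SO_X^i\circ\Phi^*$ with a natural transformation relating $\Phi^*\circ\LL_\CB\circ\CL_M$ and $\LL_{\CB_X}\circ\CL_X\circ\Phi^*$. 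So the first key step is to produce a morphism $\Phi^*\circ\LL_\CB \to \LL_{\CB_X}\circ\Phi^*$.

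For this step, I would apply $\Phi^*$ to the defining triangle \eqref{def-left-mut} for $\LL_\CB$, namely $\beta_M\beta_M^!\to\id\to\LL_\CB$, obtaining a triangle $\Phi^*\beta_M\beta_M^!\to\Phi^*\to\Phi^*\LL_\CB$; then I would compare it with $\Phi^*$ pre-composed into the defining triangle $\beta_X\beta_X^!\to\id\to\LL_{\CB_X}$, i.e. $\beta_X\beta_X^!\Phi^*\to\Phi^*\to\LL_{\CB_X}\Phi^*$. Using \eqref{equation-beta-x-beta-m}, the left term of the second triangle is $\Phi^*\beta_M\beta_M^!\Phi^*$, so there is an evident morphism from the left term of the first triangle to that of the second, induced by the unit $\id\to\Phi\Phi^*$ (or really by the counit $\beta_M^!\Phi\Phi^*\beta_M\to\beta_M^!\beta_M=\id$ appearing through $\beta_X^!=\beta_M^!\Phi$). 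This gives a map of triangles and hence $\gamma: \Phi^*\LL_\CB\to\LL_{\CB_X}\Phi^*$, compatibly with $\CL$-twists by \eqref{intertwine}. Then $\gamma^{i+1}$ is built by pasting $\gamma^i$ with this map.

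For the isomorphism claim, I would argue that $\gamma^{i+1}$ is an isomorphism precisely on the stated orthogonal. The heart of the matter is that, by the octahedron/map-of-triangles argument, $\gamma$ fails to be an isomorphism only by the "error term" coming from the cone of $\Phi^*\beta_M\beta_M^!\to\Phi^*\beta_M\beta_M^!\Phi^*\beta_M$ applied suitably — but this error is controlled by $T_M$ and the semiorthogonality \eqref{equation-tm-cb}: the relevant failure is supported on the subcategory where $T_M$ (equivalently the cone of $\epsi$ or $\eta$ from \eqref{defty}) interacts nontrivially with $\CB$, which after accounting for the $i$ successive $\CL_M$-twists is exactly $\langle\CB\otimes\CL_M^{d-i},\dots,\CB\otimes\CL_M^{d-1}\rangle$. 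On the complementary orthogonal, which by the rectangular Lefschetz decomposition \eqref{lefy} equals $\langle\CB\otimes\CL_M^{d-m},\dots,\CB\otimes\CL_M^{d-i-1}\rangle$, all the error terms vanish and $\gamma^{i+1}$ is an isomorphism. The condition $0\le i\le d-1$ ensures the number of twists stays within range so that the subcategory identification via \eqref{lefy} and its twists is valid. I expect the main obstacle to be bookkeeping: carefully tracking through the induction which twisted copies of $\CB$ the error terms live in, and verifying that at each stage the relevant vanishing follows from \eqref{equation-tm-cb} and the definition of $\SO_M$ as in Corollary~\ref{corollary-so-i-zero}, rather than any genuinely new input.
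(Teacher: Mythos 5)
Your proposal follows essentially the same route as the paper: the one-step map $\gamma$ comes from comparing the mutation triangles for $\LL_\CB$ and $\LL_{\CB_X}$ via the unit $\id\to\Phi\Phi^*$ together with~\eqref{intertwine} and~\eqref{equation-beta-x-beta-m}, the error term is $\beta_M^!\circ\CL_M\circ T_M$, which vanishes on $(\CB\otimes\CL_M^{d-1})^\perp$ by~\eqref{equation-tm-cb}, and the general case is handled by iterating $\gamma$ and inducting. The ``bookkeeping'' you defer is precisely the paper's use of Lemma~\ref{rotation-power} in the induction step, writing $\SO_M^{i-1}(F)=\LL_{\langle\CB,\dots,\CB\otimes\CL_M^{i-2}\rangle}(F\otimes\CL_M^{i-1})$ and checking (using $i\le d-1$ and semiorthogonality) that this object again lies in $(\CB\otimes\CL_M^{d-1})^\perp$ so the base case can be reapplied.
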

\begin{proof}
For $i = 0$ there is nothing to prove, so consider the case $i = 1$. 
Then we have the following diagram
\begin{equation*}
\xymatrix@C=5em{
\Phi^* \beta_M \beta_M^! \CL_M \ar[r]^-{\epsi_{\beta_M,\beta_M^!}} \ar[d]_{\eta_{\Phi,\Phi^*}} & \Phi^* \CL_M \ar[r] \ar@{=}[d] & \Phi^* \SO_M \ar@{..>}[d] \\
\beta_X \beta_X^! \CL_X \Phi^* \ar[r]^-{\epsi_{\beta_X,\beta_X^!}} & \CL_X \Phi^* \ar[r] & \SO_X \Phi^* 
}
\end{equation*}
where 
the rows are obtained by compositng~\eqref{def-om} and~\eqref{def-ox} with $\Phi^*$,
the isomorphism in the middle column is induced by~\eqref{intertwine}, while the arrow in the left column is given by
the isomorphisms~\eqref{equation-beta-x-beta-m} 
and~\eqref{intertwine} 
(altogether giving an isomorphism $\beta_X \beta_X^! \CL_X \Phi^* \cong \Phi^* \beta_M \beta_M^! \CL_M \Phi \Phi^*$)
and the unit of the adjunction $\eta_{\Phi,\Phi^*}:\id \to \Phi\Phi^*$. The left square clearly commutes, hence it extends to a morphism of triangles
by the dotted arrow on the right which we denote by $\gamma$. It remains to show that $\gamma$ is an isomorphism on the subcategory 
$(\CB \otimes \CL_M^{d-1})^\perp =  \langle \CB \otimes \CL_M^{d-m}, \CB \otimes \CL_M^{d+1-m}, \dots, \CB \otimes \CL_M^{d-2} \rangle \subset \BD(M)$.

By construction of the left arrow in the diagram, the first column extends to a triangle
\begin{equation*}
\Phi^* \beta_M \beta_M^! \CL_M T_M \xrightarrow{\ \ \ } \Phi^* \beta_M \beta_M^! \CL_M \xrightarrow{\ \eta_{\Phi,\Phi^*}\ } \beta_X \beta_X^! \CL_X \Phi^*
\end{equation*}
Note that the first functor here vanishes on the subcategory $(\CB \otimes \CL_M^{d-1})^\perp \subset \BD(M)$.
Indeed, by~\eqref{equation-tm-cb} the functor $T_M$ takes it into $(\CB \otimes \CL_M^{-1})^\perp \subset \BD(M)$,
then $\CL_M$ takes it to
$\CB^\perp \subset \BD(M)$
which is 
killed by $\beta_M^!$.
It follows that the left arrow in the above diagram is an isomorphism on the subcategory $(\CB \otimes \CL_M^{d-1})^\perp$,
hence so is the right arrow.

Now assume that $i > 1$. We define the map $\Phi^* \circ \SO_M^i \to \SO_X^i \circ \Phi^*$ by an iteration of the map $\gamma$
\begin{equation*}
\Phi^*\circ \SO_M^i \xrightarrow{\ \gamma\ } 
\SO_X \circ \Phi^* \circ \SO_M^{i-1} \xrightarrow{\ \gamma\ } 
\dots \xrightarrow{\ \gamma\ } 
\SO_X^{i-1} \circ \Phi^* \circ \SO_M \xrightarrow{\ \gamma\ } 
\SO_X^i \circ \Phi^* 
\end{equation*}
and denote it by $\gamma^i$.
It remains to prove that it induces an isomorphism on the specified subcategory. 
We prove this by induction in $i$, the case $i = 1$ proved above being the base of the induction.
So, assume that we already have proved that $\gamma^{i-1}$ induces an isomorphism
\begin{equation*}
\Phi^*\circ \SO_M^{i-1} \cong \SO_X^{i-1} \circ \Phi^*
\end{equation*}
on $\langle \CB \otimes \CL_M^{d-i+1},\dots,\CB \otimes \CL_M^{d-1} \rangle^\perp \subset \BD(M)$. 
Assume now that $F \in \langle \CB \otimes \CL_M^{d-i},\dots,\CB \otimes \CL_M^{d-1} \rangle^\perp \subset \BD(M)$. 
Then by the induction hypothesis
\begin{equation*}
\SO_X^{i} \circ \Phi^*(F) = 
\SO_X(\SO_X^{i-1} \circ \Phi^*(F)) \cong
\SO_X(\Phi^*\circ \SO_M^{i-1}(F)) = 
(\SO_X \circ \Phi^*) (\SO_M^{i-1}(F)).
\end{equation*}
On the other hand, by Lemma~\ref{rotation-power} we have
$\SO_M^{i-1}(F) = \LL_{\langle \CB, \dots, \CB \otimes \CL_M^{i-2} \rangle}(F \otimes \CL_M^{i-1})$.
It is easy to see that $F \otimes \CL_M^{i-1} \in (\CB \otimes \CL_M^{d-1})^\perp$ and $\langle \CB, \dots, \CB \otimes \CL_M^{i-2} \rangle \subset (\CB \otimes \CL_M^{d-1})^\perp$ as well.
It follows from the definition of mutations that $\SO_M^{i-1}(F) = \LL_{\langle \CB, \dots, \CB \otimes \CL_M^{i-2} \rangle}(F \otimes \CL_M^{i-1}) \in (\CB \otimes \CL_M^{d-1})^\perp$, and hence
the base of induction applies and 
\begin{equation*}
(\SO_X \circ \Phi^*) (\SO_M^{i-1}(F)) \cong (\Phi^* \circ \SO_M)(\SO_M^{i-1}(F)) = (\Phi^* \circ \SO_M^i)(F).
\end{equation*}
This completes the proof of the Lemma.
\end{proof}

Consider the composition of maps $\Phi^* \circ \SO_M^i \circ \Phi \xrightarrow{\ \gamma^i\ } \SO_X^i \circ \Phi^* \circ \Phi \xrightarrow{\ \epsi_{\Phi^*,\Phi}\ } \SO_X^i$.

\begin{proposition}\label{proposition-triangle-o-o}
For each $0 \le i \le d$ there is a distinguished triangle of functors
\begin{equation*}
\Phi^*\circ\SO_M^i \circ \Phi \xrightarrow{\ \epsi_{\Phi^*,\Phi} \circ \gamma^i\ } \SO_X^i \xrightarrow{\qquad} T_X\circ\CL_X^i.
\end{equation*}
\end{proposition}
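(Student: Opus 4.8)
The plan is to induct on $i$, with the base case $i=0$ being the triangle $\Phi^*\circ\Phi \xrightarrow{\ \epsi_{\Phi^*,\Phi}\ } \id \xrightarrow{\quad} T_X$, which is precisely the definition~\eqref{deftx} of the spherical twist $T_X$ (and $\SO_X^0 = \id$, $\CL_X^0 = \id$). For the inductive step I would compose the triangle for $i-1$ on the left with $\SO_X$ and on the right with appropriate functors, and then splice it with the $i=1$ case using the octahedral axiom. More precisely, the key structural input is Lemma~\ref{lemma-phi-o-intertwine}, which furnishes the maps $\gamma^i$ and an iterative factorization $\gamma^i = (\SO_X^{i-1}\circ\gamma)\circ(\gamma^{i-1}\circ\SO_M)$ (reading off the displayed chain of $\gamma$'s), together with Corollary~\ref{corollary-t-phi} giving $\Phi^*\circ T_M \cong T_X\circ\Phi^*[-2]$ or its equivalent form $T_X\circ\Phi^* \cong \Phi^*\circ T_M\circ[2]$.

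The heart of the argument is to analyze the map $\epsi_{\Phi^*,\Phi}\circ\gamma^i : \Phi^*\circ\SO_M^i\circ\Phi \to \SO_X^i$ by relating it to the map for $i-1$. I would build a commutative diagram whose rows are: the $i=1$ triangle $\Phi^*\circ\SO_M\circ\Phi \to \SO_X \to T_X\circ\CL_X$ (from the base case, but with a shift in indices --- actually the $i=1$ instance of the very statement being proved), composed on the left with $\SO_X^{i-1}$ and suitably with $\Phi^*\circ\SO_M^{i-1}\circ\Phi$ on the source; and the $(i-1)$-triangle composed on the right with $\CL_X$ (or $\SO_M$, $\Phi$ as needed). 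The composite $\SO_X^{i-1}\circ(\Phi^*\circ\SO_M\circ\Phi) \xrightarrow{\SO_X^{i-1}(\epsi\gamma)} \SO_X^i$ combined with $\gamma^{i-1}\circ\SO_M\circ\Phi : \Phi^*\circ\SO_M^i\circ\Phi \to \SO_X^{i-1}\circ\Phi^*\circ\SO_M\circ\Phi$ should reproduce $\epsi_{\Phi^*,\Phi}\circ\gamma^i$ up to the identification of $\gamma^i$ via its iterative definition. The cone of the first leg is $\SO_X^{i-1}\circ T_X\circ\CL_X = T_X\circ\SO_X^{i-1}\circ\CL_X$ (commuting $T_X$ past $\SO_X$, which holds since $T_X$ commutes with $\CL_X$ by~\eqref{equation-tx-lx} and with $\LL_{\CB_X}$ --- this last needs checking, or one uses that $T_X = \rho\circ\CL_X^{-d}$ up to shift and $\rho$ commutes with $\SO_X$ by Lemma~\ref{lemma-sigma-ox}), which by Lemma~\ref{rotation-power} and a twist equals $T_X\circ\LL_{\langle\CB_X,\dots,\CB_X\otimes\CL_X^{i-2}\rangle}\circ\CL_X^i$. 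Then the octahedral axiom applied to the composite $\Phi^*\circ\SO_M^i\circ\Phi \to \SO_X^{i-1}\circ\Phi^*\circ\SO_M\circ\Phi \to \SO_X^i$ yields a triangle whose third term is the cone, which I must identify with $T_X\circ\CL_X^i$.

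The main obstacle I anticipate is controlling the cone of $\gamma^{i-1}\circ\SO_M\circ\Phi$, i.e.\ understanding where the failure of $\gamma^{i-1}$ to be an isomorphism contributes. Lemma~\ref{lemma-phi-o-intertwine} only guarantees that $\gamma^{i-1}$ is an isomorphism on a specified orthogonal subcategory, so $\Cone(\gamma^{i-1})$ is supported on $\langle\CB\otimes\CL_M^{d-i+1},\dots,\CB\otimes\CL_M^{d-1}\rangle$; one must feed this through $\SO_M\circ\Phi$ (so really one needs the cone of $\gamma^{i-1}\circ\SO_M$ applied after $\Phi$) and check it contributes exactly the right extra term so that the two cones assemble into $T_X\circ\CL_X^i$. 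Concretely I expect the cleanest route is: by the octahedral axiom on $\gamma^i = (\SO_X^{i-1}\circ\gamma)\circ(\gamma^{i-1}\circ\SO_M)$ there is a triangle $\Cone(\gamma^{i-1}\circ\SO_M)\circ\Phi \to \Cone(\gamma^i\circ\Phi) \to \SO_X^{i-1}\circ\Cone(\gamma\circ\SO_M^{?})$, but rather than tracking cones of the $\gamma$'s directly, compose everything with $\epsi_{\Phi^*,\Phi}$ from the start and use the already-established base and $i=1$ cases together with induction on the triangle $\beta_X\beta_X^!\CL_X \to \CL_X \to \SO_X$ of~\eqref{def-ox} --- i.e.\ tensor the inductive triangle with $\CL_X$, splice with $\eqref{def-ox}$ applied appropriately, and use that $\beta_X^!\circ\CL_X\circ(\text{stuff})$ kills the relevant pieces exactly as in the proof of Lemma~\ref{indlcx}. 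I would therefore set up the induction so the step reads: $\SO_X^i = \LL_{\CB_X}\circ\CL_X\circ\SO_X^{i-1}$, apply~\eqref{def-ox} composed with $\SO_X^{i-1}$, compare with $\Phi^*\circ\SO_M^i\circ\Phi = \Phi^*\circ\LL_{\CB}\circ\CL_M\circ\SO_M^{i-1}\circ\Phi$, and use~\eqref{equation-beta-x-beta-m}, \eqref{intertwine}, the induction hypothesis, and Corollary~\ref{corollary-t-phi} to identify the third vertex as $T_X\circ\CL_X\circ(T_X\circ\CL_X^{i-1}) = T_X\circ\CL_X^i$ after commuting $T_X$ through $\CL_X$ --- the bookkeeping of which orthogonality classes are killed at each stage (governed by the hypothesis $i\le d$, so that $T_M$ shifts $\CB$-twists by $-d$ and stays within the killed range) being the delicate point.
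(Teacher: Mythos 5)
Your skeleton is the right one --- induction on $i$ with base case \eqref{deftx}, an inductive step built by comparing the defining triangles \eqref{def-om} and \eqref{def-ox} whiskered by powers of the rotation functors, with the middle comparison map being the $(i-1)$-st triangle composed with $\CL_X$ --- and this is indeed how the paper argues. But the step that actually closes the induction is missing from both of your routes, and the one concrete identification you commit to is false. In the paper one peels off the rotation factor \emph{adjacent to} $\Phi$: the rows are \eqref{def-om} composed with $\Phi^*\circ\SO_M^{i-1}$ on the left and $\Phi$ on the right, and \eqref{def-ox} composed with $\SO_X^{i-1}$ on the left. Then the $\beta\beta^!$-column is $\gamma^{i-1}$ evaluated only on objects of $\Im\beta_M=\CB$, and since $i\le d$ one has $\Hom(\CB\otimes\CL_M^{d-j},\CB)=0$ for $1\le j\le i-1$, so $\CB$ lies in the isomorphism range of Lemma~\ref{lemma-phi-o-intertwine}; hence that column is an isomorphism, and by the octahedral axiom the cone of $\epsi_{\Phi^*,\Phi}\circ\gamma^i$ equals the cone of the middle column, which by the induction hypothesis (composed with $\CL_X$ on the right, using \eqref{intertwine}) is $T_X\circ\CL_X^{i-1}\circ\CL_X=T_X\circ\CL_X^i$ --- a \emph{single} $T_X$, precisely because one of the two columns has vanishing cone. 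Your closing identification ``$T_X\circ\CL_X\circ(T_X\circ\CL_X^{i-1})=T_X\circ\CL_X^i$'' is not an identity: since $T_X$ and $\CL_X$ commute by \eqref{equation-tx-lx}, the left-hand side is $T_X^2\circ\CL_X^i$. It reflects the idea that the cone of a composite is the composite of the cones, which is both formally illegitimate and off by exactly the factor of $T_X$ that the proposition is designed to eliminate.

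Concretely, both of your routes stall at this point. In the first, you correctly observe that the cone of $\gamma^{i-1}$ whiskered by $\SO_M\circ\Phi$ is not controlled by Lemma~\ref{lemma-phi-o-intertwine} (the image of $\SO_M\circ\Phi$ does not lie in its isomorphism range), and you never resolve this; moreover the commutation $\SO_X^{i-1}\circ T_X\cong T_X\circ\SO_X^{i-1}$ you invoke there fails, since by Corollary~\ref{corollary-t-phi}, \eqref{equation-tm-cb} and \eqref{intertwine} one has $T_X(\CB_X)=\CB_X\otimes\CL_X^{-d}$, so $T_X\circ\LL_{\CB_X}\circ T_X^{-1}=\LL_{\CB_X\otimes\CL_X^{-d}}\neq\LL_{\CB_X}$ by Lemma~\ref{lemma-mutations-basics}. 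In the second, you peel the \emph{leftmost} factor, comparing $\Phi^*\circ\LL_{\CB}\circ\CL_M\circ\SO_M^{i-1}\circ\Phi$ with $\LL_{\CB_X}\circ\CL_X\circ\SO_X^{i-1}$; but then the two $\beta\beta^!$-terms are $\Phi^*\beta_M\beta_M^!\CL_M\circ\SO_M^{i-1}\circ\Phi$ and, via \eqref{equation-beta-x-beta-m} and \eqref{intertwine}, $\Phi^*\beta_M\beta_M^!\CL_M\circ\Phi\circ\SO_X^{i-1}$, and relating them requires an intertwiner of the shape $\SO_M^{i-1}\circ\Phi\to\Phi\circ\SO_X^{i-1}$, which is not what $\gamma$ provides. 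Manufacturing one by inserting the unit $\eta_{\Phi,\Phi^*}$ brings in $T_M$, and the needed vanishing of $\beta_M^!\circ\CL_M\circ T_M$ on the image of $\SO_M^{i-1}$ fails for $i\le d$: that image, after applying $\CL_M\circ T_M$, has components in $\CB\otimes\CL_M^{-m}=\SSS_M(\CB)$ (Lemma~\ref{lemma:serre-lefschetz}), and $\Hom(\CB,\SSS_M(\CB))\neq 0$. So the ``bookkeeping of which orthogonality classes are killed'' that you defer is exactly where your set-up breaks; peeling the factor next to $\Phi$, so that $\gamma^{i-1}$ is only ever fed objects of $\CB$, is the missing idea.
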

\begin{proof}
We prove this by induction in $i$. The base of the induction, the case $i = 0$, is provided by the triangle~\eqref{deftx}.
So, assume that $i > 0$. Consider the diagram
\begin{equation*}
\xymatrix@C=4em{
\Phi^*\circ \SO_M^{i-1} \circ \beta_M\beta_M^!\CL_M \circ \Phi \ar[r]^-{\epsi_{\beta_M,\beta_M^!}} \ar[d]^{\gamma^{i-1}} & 
\Phi^*\circ \SO_M^{i-1} \circ \CL_M \circ \Phi \ar[r] \ar[d]^{\gamma^{i-1}} & 
\Phi^*\circ \SO_M^{i-1} \circ \SO_M \circ \Phi \ar[d]^{\gamma^{i-1}}
\\
\SO_X^{i-1} \circ \Phi^* \circ \beta_M\beta_M^!\CL_M \circ \Phi \ar[r]^-{\epsi_{\beta_M,\beta_M^!}} \ar@{=}[d] & 
\SO_X^{i-1} \circ \Phi^* \circ \CL_M \circ \Phi \ar[r] \ar[d]^{\epsi_{\Phi^*,\Phi}} & 
\SO_X^{i-1} \circ \Phi^* \circ \SO_M \circ \Phi \ar[d]^{\epsi_{\Phi^*,\Phi}\circ \gamma}
\\
\SO_X^{i-1} \circ \Phi^* \circ \beta_M\beta_M^! \circ \Phi \circ \CL_X \ar[r]^-{\epsi_{\Phi^*,\Phi}\circ\epsi_{\beta_M,\beta_M^!}} & 
\SO_X^{i-1} \circ \CL_X \ar[r] & 
\SO_X^{i-1} \circ \SO_X 
}
\end{equation*}
Here the first row is obtained by composing the triangle~\eqref{def-om} with $\Phi^*\circ \SO_M^{i-1}$ on the left and $\Phi$ on the right,
the second row is obtained by composing it with $\SO_X^{i-1} \circ \Phi^*$ on the left and $\Phi$ on the right,
and the last row is obtained by composing the triangle~\eqref{def-ox} with $\SO_X^{i-1}$ on the left
(taking into account~\eqref{equation-beta-x-beta-m}). So the rows are distinguished triangles
and the vertical maps form morphisms of distinguished triangles (for the first this is evident,
and for the second this follows from the definition of $\gamma$ in Lemma~\ref{lemma-phi-o-intertwine}).
Composing the morphisms of these triangles, we get the following commutative diagram
\begin{equation*}
\xymatrix@C=4em{
\Phi^*\circ \SO_M^{i-1} \circ \beta_M\beta_M^!\CL_M \circ \Phi \ar[r]^-{\epsi_{\beta_M,\beta_M^!}} \ar[d]^{\gamma^{i-1}} & 
\Phi^*\circ \SO_M^{i-1} \circ \CL_M \circ \Phi \ar[r] \ar[d]^{\epsi_{\Phi^*,\Phi} \circ\gamma^{i-1}} & 
\Phi^*\circ \SO_M^{i-1} \circ \SO_M \circ \Phi \ar[d]^{\epsi_{\Phi^*,\Phi} \circ \gamma^i} \\
\SO_X^{i-1} \circ \beta_X\beta_X^!\CL_X \ar[r]^-{\epsi_{\beta_X,\beta_X^!}} & 
\SO_X^{i-1} \circ \CL_X \ar[r] &
\SO_X^{i-1} \circ \SO_X 
}
\end{equation*}
(we have rewritten the first term of the bottom row via~\eqref{equation-beta-x-beta-m}).

Note that $i \le d$ implies $\Im \beta_M = \CB \subset \langle \CB \otimes \CL_M^{d-(i-1)}, \dots, \CB \otimes \CL_M^{d-1} \rangle^\perp$ hence the left arrow
is an isomorphism by Lemma~\ref{lemma-phi-o-intertwine}. Moreover, by induction hypothesis the middle vertical map extends to a distinguished triangle
by $T_X \circ \CL_X^i$. Therefore, the octahedron axiom implies that the right vertical arrow extends to a distinguished triangle
\begin{equation*}
\Phi^*\circ \SO_M^{i} \circ \Phi \xrightarrow{\ \epsi_{\Phi^*,\Phi} \circ\gamma^i\ } \SO_X^i \xrightarrow{\qquad} T_X \circ \CL_X^i,
\end{equation*}
and thus proves the required claim.
\end{proof}

\begin{corollary}
The restriction of $\SO_X$ to the subcategory $\CA_X \subset \BD(X)$ is an autoequivalence such that 
\begin{equation}\label{equation-so-d}
\SO_{X|\CA_X}^d \cong \rho_{|\CA_X},
\end{equation} 
where $\rho$ is defined by~\eqref{equation-sigma}.
\end{corollary}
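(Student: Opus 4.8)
The plan is to deduce~\eqref{equation-so-d} directly from Proposition~\ref{proposition-triangle-o-o} with $i = d$, and then to upgrade the resulting functorial isomorphism to the statement that $\SO_X$ restricts to an autoequivalence of $\CA_X$.

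First I would specialize Proposition~\ref{proposition-triangle-o-o} to $i = d$. Since $T_X \circ \CL_X^d = \rho$ by the definition~\eqref{equation-sigma}, this gives a distinguished triangle of endofunctors $\Phi^* \circ \SO_M^d \circ \Phi \to \SO_X^d \to \rho$. The crucial observation is that the first term vanishes on $\CA_X$: by Lemma~\ref{cata} the functor $\Phi$ carries $\CA_X$ into $\langle \CB\otimes\CL_M^{-d}, \dots, \CB\otimes\CL_M^{-1}\rangle$, and by Corollary~\ref{corollary-so-i-zero} (applied with $i = d$) the functor $\SO_M^d$ annihilates that subcategory. Hence the second arrow of the triangle restricts to an isomorphism $\SO_X^d|_{\CA_X} \cong \rho|_{\CA_X}$, which is precisely~\eqref{equation-so-d}. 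Since $\rho$ preserves $\CA_X$ and restricts there to an autoequivalence --- it fixes each component of~\eqref{lefx} by Lemma~\ref{lemma-sigma-bx}, hence preserves their orthogonal, and the same holds for $\rho^{-1}$ --- it already follows that $\SO_X^d$ maps $\CA_X$ into itself and is an autoequivalence there.

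It then remains to verify that $\SO_X$ itself preserves $\CA_X$. Writing $\SO_X = \LL_{\CB_X}\circ\CL_X$, for $F \in \CA_X$ the object $\SO_X(F) = \LL_{\CB_X}(\CL_X\otimes F)$ lies in $\CB_X^\perp$ tautologically; and for $1 \le j \le m-d-1$ one has $\CB_X\otimes\CL_X^j \in {}^\perp\CB_X$ by semiorthogonality of~\eqref{lefx}, so the defining triangle of $\LL_{\CB_X}$ gives $\Hom(\CB_X\otimes\CL_X^j,\SO_X(F)) \cong \Hom(\CB_X\otimes\CL_X^j,\CL_X\otimes F) \cong \Hom(\CB_X\otimes\CL_X^{j-1},F)$, which vanishes since $0 \le j-1 \le m-d-2$ and $F \in \CA_X$. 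Thus $\SO_X(F) \in \CA_X$. Granting this, $G := \SO_X|_{\CA_X}$ is an endofunctor of $\CA_X$ whose $d$-th power $G^d = \SO_X^d|_{\CA_X} \cong \rho|_{\CA_X}$ is an autoequivalence; and any endofunctor $G$ with $G^d$ an autoequivalence is itself an autoequivalence (if $H$ is quasi-inverse to $G^d$, then $G^{d-1}\circ H$ is a right and $H\circ G^{d-1}$ a left quasi-inverse of $G$, so these are isomorphic and $G$ is an equivalence).

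The only non-formal input is Proposition~\ref{proposition-triangle-o-o}, which is already available; the remaining steps are bookkeeping. The one place needing a small genuine argument --- and the main obstacle, such as it is --- is the verification that $\SO_X$, not merely $\SO_X^d$, preserves $\CA_X$, since this is exactly what is needed to run the formal ``$d$-th power of an autoequivalence'' argument.
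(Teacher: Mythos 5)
Your proof of the isomorphism $\SO_{X|\CA_X}^d \cong \rho_{|\CA_X}$ is exactly the paper's argument: specialize Proposition~\ref{proposition-triangle-o-o} to $i=d$, and kill the first term of the triangle on $\CA_X$ by combining Lemma~\ref{cata} with Corollary~\ref{corollary-so-i-zero}. Your additional check that $\SO_X$ itself preserves $\CA_X$ (via the defining triangle of $\LL_{\CB_X}$ and semiorthogonality of~\eqref{lefx}), and hence restricts to an autoequivalence because its $d$-th power is one, is correct and simply makes explicit a point the paper's terse proof leaves implicit.
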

\begin{proof}
Let us restrict the triangle of Proposition~\ref{proposition-triangle-o-o} to $\CA_X$.
The first term of the triangle then vanishes by a combination of Lemma~\ref{cata}
and Corollary~\ref{corollary-so-i-zero}. Therefore, the functors given by the second and the third terms
are isomorphic, so it remains to use the definition~\eqref{equation-sigma} of $\rho$.
\end{proof}

\subsection{Proof of the Theorem}\label{subsection-proof}

To finish we need a relation between the Serre functor of $\CA_X$ and the rotation functor.

\begin{lemma}
The Serre functor of the category $\CA_X$ is given by
\begin{equation}\label{equation-sax}
\SSS^{-1}_{\CA_X} \cong \SO_X^{m-d} \circ \rho \circ \sigma^{-1}.
\end{equation} 
\end{lemma}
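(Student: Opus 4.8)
The plan is to compute $\SSS_{\CA_X}^{-1}$ directly from Lemma~\ref{lemma-serre-mutation} applied to the semiorthogonal decomposition~\eqref{sodx}, and then to convert the resulting mutation expression into the desired form using the structural results already available. Write $\CD = \langle \CB_X, \CB_X\otimes\CL_X,\dots,\CB_X\otimes\CL_X^{m-d-1}\rangle$, so that $\BD(X) = \langle \CA_X,\CD\rangle$. By Lemma~\ref{lemma-serre-mutation} we have $\SSS_{\CA_X}^{-1} \cong \LL_{\CD}\circ\SSS_X^{-1}$. Now $\SSS_X^{-1} = (\SSS_X\circ T_X\circ\CL_X^m)^{-1}\circ\CL_X^m\circ T_X = \sigma^{-1}\circ\CL_X^m\circ T_X$, and since $T_X\circ\CL_X^m = (T_X\circ\CL_X^d)\circ\CL_X^{m-d} = \rho\circ\CL_X^{m-d}$ using~\eqref{equation-sigma} and~\eqref{equation-tx-lx}, we get $\SSS_X^{-1}\cong\sigma^{-1}\circ\rho\circ\CL_X^{m-d}$. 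So the target is to identify
\begin{equation*}
\LL_{\CD}\circ\sigma^{-1}\circ\rho\circ\CL_X^{m-d} \cong \SO_X^{m-d}\circ\rho\circ\sigma^{-1}.
\end{equation*}

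The key step is to recognize $\LL_{\CD}\circ\CL_X^{m-d}$ as a power of the rotation functor. By Lemma~\ref{rotation-power} applied to the (nonfull) Lefschetz collection~\eqref{lefx} with $s = m-d$ and $i = m-d$, we have exactly $\SO_X^{m-d} = \LL_{\langle\CB_X,\CB_X\otimes\CL_X,\dots,\CB_X\otimes\CL_X^{m-d-1}\rangle}\circ\CL_X^{m-d} = \LL_{\CD}\circ\CL_X^{m-d}$. Thus $\SSS_{\CA_X}^{-1}\cong\LL_{\CD}\circ\sigma^{-1}\circ\rho\circ\CL_X^{m-d}$, and it remains only to move $\sigma^{-1}\circ\rho$ past $\LL_{\CD}$ (equivalently, out to the right). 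Since $\rho$ and $\sigma$ both preserve each component of~\eqref{lefx} — this is the content of Lemma~\ref{lemma-sigma-bx} together with~\eqref{equation-tx-lx} — they commute with $\LL_{\CD}$ by Lemma~\ref{lemma-mutations-basics}(ii) (each component of $\CD$ is of the form $\CB_X\otimes\CL_X^j$ and is sent to itself). Therefore $\LL_{\CD}\circ\sigma^{-1}\circ\rho = \sigma^{-1}\circ\rho\circ\LL_{\CD}$, and combining,
\begin{equation*}
\SSS_{\CA_X}^{-1}\cong\sigma^{-1}\circ\rho\circ\LL_{\CD}\circ\CL_X^{m-d} = \sigma^{-1}\circ\rho\circ\SO_X^{m-d}.
\end{equation*}
Finally, $\SO_X$ restricted to $\CA_X$ commutes with $\rho$ and $\sigma$ by Lemma~\ref{lemma-sigma-ox}, so on $\CA_X$ the factors may be reordered to $\SO_X^{m-d}\circ\rho\circ\sigma^{-1}$, which is~\eqref{equation-sax}.

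I expect the main obstacle to be purely bookkeeping: making sure that every commutation is legitimate on the right category (all of $\BD(X)$ versus only $\CA_X$), and that the powers of $\CL_X$ are distributed correctly between $\rho$, $\sigma$, and the mutation. The identity $\SO_X^{m-d} = \LL_{\CD}\circ\CL_X^{m-d}$ requires $m-d$ to be within the range $0\le i\le s$ of Lemma~\ref{rotation-power} with $s = m-d$, which is exactly the boundary case $i = s$; this is fine since $d<m$ by hypothesis. The commutation of $\rho,\sigma$ with $\LL_{\CD}$ uses that these autoequivalences fix each graded piece $\CB_X\otimes\CL_X^j$ of the collection — not merely the subcategory $\CD$ as a whole — which is precisely what Lemma~\ref{lemma-sigma-bx} supplies, so no genuine difficulty arises there either. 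The remaining reordering on $\CA_X$ is immediate from Lemma~\ref{lemma-sigma-ox}.
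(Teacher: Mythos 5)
Your proposal is correct and takes essentially the same route as the paper: Lemma~\ref{lemma-serre-mutation} for $\SSS_{\CA_X}^{-1}$, the definition~\eqref{equation-tau} of $\sigma$ to rewrite $\SSS_X^{-1}$, and Lemma~\ref{rotation-power} with $i=m-d$ to recognize $\LL\circ\CL_X^{m-d}$ as $\SO_X^{m-d}$. The paper is just slightly more economical, writing $\SSS_X^{-1}=\CL_X^m\circ T_X\circ\sigma^{-1}$ with $\sigma^{-1}$ already on the right so that no commutations past the mutation or the rotation functor are needed; note also that your final reordering implicitly uses that $\rho$ and $\sigma$ commute, which is immediate from~\eqref{equation-tx-lx} and Lemma~\ref{lemma-serre-basics}(ii) and is recorded in the paper right after this lemma.
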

\begin{proof}
By Lemma~\ref{lemma-serre-mutation} we have $\SSS^{-1}_{\CA_X} = \LL_{\langle \CB_X, \dots, \CB_X \otimes \CL_X^{m-d-1} \rangle} \circ \SSS_X^{-1}$
and by~\eqref{equation-tau} we have $\SSS_X^{-1} = \CL_X^m \circ T_X \circ \sigma^{-1}$. Combining this we obtain
\begin{multline*}
\SSS^{-1}_{\CA_X} \cong 
\LL_{\langle \CB_X, \dots, \CB_X \otimes \CL_X^{m-d-1} \rangle} \circ \CL_X^{m} \circ T_X \circ \sigma^{-1}
\cong \\ \cong
(\LL_{\langle \CB_X, \dots, \CB_X \otimes \CL_X^{m-d-1} \rangle} \circ \CL_X^{m-d}) \circ (\CL_X^d \circ T_X) \circ \sigma^{-1}
\cong
\SO_X^{m-d} \circ \rho \circ \sigma^{-1}.
\end{multline*}
Here 
the last isomorphism is Lemma~\ref{rotation-power}.
\end{proof}

To finish the proof note that $\rho$ and $\sigma$ commute. Indeed, both are combinations of $T_X$, $\CL_X$, and $\SSS_X$,
but $T_X$ and $\CL_X$ commute by~\eqref{equation-tx-lx}, and $\SSS_X$ commutes with any autoequivalence by Lemma~\ref{lemma-serre-basics}.
Moreover, both $\rho$ and $\sigma$ commute with $\SO_X$ by Lemma~\ref{lemma-sigma-ox}. Therefore, taking the $d/c$ power of~\eqref{equation-sax},
where $c = \gcd(d,m)$, we obtain
\begin{equation*}
\SSS^{-d/c}_{\CA_X} \cong \SO_X^{d(m-d)/c} \circ \rho^{d/c} \circ \sigma^{-d/c}.
\end{equation*}
But $\SO_X^{d(m-d)/c} \cong \rho^{(m-d)/c}$ on the subcategory $\CA_X$ by \eqref{equation-so-d}, hence
$\SSS^{-d/c}_{\CA_X} \cong \rho^{m/c} \circ \sigma^{-d/c}$.
This completes the proof of the Theorem.

\begin{remark}
Note that we could weaken the assumptions of the Theorem as follows. First, we could replace $\CL_M$ and $\CL_X$
by arbitrary autoequivalences (not necessarily tensoring with a line bundle). Second, we could replace $\BD(X)$ and $\BD(M)$
by admissible subcategories (in other words, we could let $X$ and $M$ to be noncommutative varieties). The same proof
would apply in this larger generality. However, we do not know whether there are interesting examples of this more 
general situation.
\end{remark}

\begin{remark}\label{remark-non-rectangular}
Most of results of this section generalize to the case when the initial Lefschetz decomposition of $\BD(M)$ is not rectangular:
\begin{equation*}
\BD(M) = \langle \CB_0, \CB_1 \otimes \CL_M, \dots, \CB_{m-1} \otimes \CL_M^{m-1} \rangle,
\qquad \CB_0 \supset \CB_1 \supset \dots \supset \CB_{m-1}.
\end{equation*}
Let us list the necessary modifications to the claims and leave the reader to check that the same proofs work.
First, in Lemma~\ref{indlcx} the functor $\Phi^*$ is fully faithful only on components $\CB_i$ with $i \ge d$, 
and the induced semiorthogonal decomposition of $\BD(X)$ looks as
\begin{equation*}
\BD(X) = \langle \CA_X, \CB_{X, d} \otimes \CL^d_M, \dots, \CB_{X, m-1} \otimes \CL_M^{m-d-1} \rangle,
\end{equation*}
Further, instead of one rotation functor there is a sequence of functors, one for each component of the Lefschetz collection.
So, we have
\begin{equation*}
\SO_{M,i} = \LL_{\CB_i} \circ \CL_M
\qquad\text{and}\qquad
\SO_{X,i} = \LL_{\CB_{X, i}} \circ \CL_X
\end{equation*}
and instead of powers it is natural to consider products of sequences of these functor. So, for any $a \le b$ we define
\begin{equation*}
\SO_M^{[a,b]} = \SO_{M,a} \circ \SO_{M,a+1} \circ \dots \circ \SO_{M,b} 
\qquad\text{and}\qquad
\SO_X^{[a,b]} = \SO_{X,a} \circ \SO_{X,a+1} \circ \dots \circ \SO_{X,b}
\end{equation*}
Then $\SO_M^{[k,k+s-1]}$ vanishes on the subcategory $\langle \CB_k \otimes \CL_M^{-s}, \dots, \CB_{k+s-1} \otimes \CL_M^{-1} \rangle$
for any $0 \le k < k + s \le m$ and as a consequence $\SO_M^{[0,d-1]}\circ \Phi$ vanishes on $\CA_X$. Furthermore, 
a modification of Lemma~\ref{lemma-phi-o-intertwine} says that
if $\langle \CB_{k+1}\otimes \CL_M, \dots, \CB_{k+s-1} \otimes \CL_M^{s-1} \rangle \subset (\CB_k \otimes \CL_M^d)^\perp$ then
$\Phi^* \circ \SO_M^{[k,k+s-1]} \cong \SO_X^{[k, k+s-1]} \circ \Phi^*$ on the subcategory $\langle \CB_{k}\otimes \CL_M^{d-s}, \dots, \CB_{k+s-1} \otimes \CL_M^{d-1} \rangle$.
Finally, a modification of Proposition~\ref{proposition-triangle-o-o} says that if
\begin{equation}\label{equation-strange-condition}
\CB_{k+i} \otimes \CL_M^i \in \langle \CB_k \otimes \CL_M^d, \CB_{k+1} \otimes \CL_M^{d+1}, \dots, \CB_{k+s-1} \otimes \CL_M^{d+s-1} \rangle^\perp
\qquad\text{for all $1 \le i \le s$,}
\end{equation}
then there is a distinguished triangle
\begin{equation*}
\Phi^* \circ \SO_M^{[k,k+s-1]} \circ \Phi \to \SO_X^{[k,k+s-1]} \to T_X\CL_X^{s+1}.
\end{equation*}
In particular, if~\eqref{equation-strange-condition} holds for $k = 0$ and $s = d$ (which is definitely true if $\CB_0 = \CB_{d-1}$) then 
\begin{equation*}
\SO_X^{[0,d-1]} \cong \rho
\qquad\text{on $\CA_X$.}
\end{equation*}
On the other hand, the (inverse) Serre functor of $\CA_X$ can be expressed as $\SSS_{\CA_X}^{-1} =  \SO_X^{[d,m-1]} \circ \rho\circ \sigma^{-1}$ and the problem of 
generalizing the construction to this setup is in relating $\SO_X^{[d,m-1]}$ to $\SO_X^{[0,d-1]}$.
\end{remark}

\section{Explicit examples}\label{section-examples}

\subsection{Varieties with a rectangular Lefschetz decomposition}\label{subsection-lefschetz}

In fact, any variety $M$ has a rectangular Lefschetz decomposition of length $m = 1$ 
with respect to the anticanonical line bundle. However, this decomposition does not produce 
an interesting Calabi--Yau category as in this case $d = m$ and $\CA_X = \BD(X)$.

So, one can get something interesting only from a rectangular Lefschetz decomposition of length greater than 1.
In the following list we give a number of such decompositions. In most of these the line bundle $\CL_M$
is the ample generator of the Picard group, so we always assume this is the case unless something else is specified.
Moreover, in all these cases $\omega_M = \CL_M^{-m}$.

\begin{enumerate}
\item\label{list-pn} 
A projective space $\PP^n$ has a rectangular Lefschetz decomposition
\begin{equation*}
\BD(\PP^n) = \langle \CO_{\PP^n}, \CO_{\PP^n}(1), \dots, \CO_{\PP^n}(n) \rangle
\end{equation*}
of length $m = n + 1$.
\item\label{list-wpn} 
A weighted projective space $\PP(w_0,w_1,\dots,w_n)$ considered as a smooth toric stack
has a rectangular Lefschetz decomposition
\begin{equation*}
\BD(\PP(w_0,w_1,\dots,w_n)) = \langle \CO_{\PP(w_0,w_1,\dots,w_n)}, \CO_{\PP(w_0,w_1,\dots,w_n)}(1), \dots, \CO_{\PP(w_0,w_1,\dots,w_n)}(m-1) \rangle
\end{equation*}
of length $m = w := w_0+w_1+\dots+w_n$.
\item\label{list-quadric}
A smooth quadric of dimension $n = 4s + 2$ has a rectangular Lefschetz decomposition 
\begin{equation*}
\BD(Q^{4s+2}) = \langle \CB, \CB(2s+1) \rangle
\end{equation*}
of length $m = 2$ with respect to the line bundle $\CO(2s+1)$, where
\begin{equation*}
\CB = \langle \CO, \CO(1), \dots, \CO(2s), \CS(2s) \rangle
\end{equation*}
with $\CS$ being one of the two spinor bundles.
\item\label{list-gr} 
A Grassmannian $\Gr(k,n)$ with $(k,n)$ coprime has a rectangular Lefschetz decomposition 
\begin{equation*}
\BD(\Gr(k,n)) = \langle \CB, \CB(1), \dots, \CB(n-1) \rangle
\end{equation*}
of length $m = n$, with the category $\CB$ generated by the exceptional collection formed by the Schur functors
$\Sigma^\alpha\CU^\vee$, where $\CU$ is the tautological rank $k$ subbundle and $\alpha$ runs through the set
of all Young diagrams with at most $k-1$ rows and with $p$-th row of length at most $(n-k)(k-p)/k$:
\begin{equation*}
\CB = \langle \Sigma^\alpha\CU^\vee \mid \alpha_1 < (n-k)(k-1)/k,\ \alpha_2 < (n-k)(k-2)/k,\ \dots,\ \alpha_{k-1} < (n-k)/k \rangle,
\end{equation*}
see~\cite{fonarev2013minimal}.
\item\label{list-ogr} 
An orthogonal Grassmannian $\OGr(2,2n+1)$ has a rectangular Lefschetz decomposition 
\begin{equation*}
\BD(\OGr(2,2n+1)) = \langle \CB, \CB(1), \dots, \CB(2n-3) \rangle
\end{equation*}
of length $m = 2n-2$, with the category $\CB$ generated by the exceptional collection 
formed by symmetric powers of the dual tautological bundle and the spinor bundle:
\begin{equation*}
\CB = \langle \CO, \CU^\vee, \dots, S^{n-2}\CU^\vee, \CS \rangle,
\end{equation*}
see~\cite{kuznetsov2008exceptional}.
\item\label{list-other-homogeneous} 
Some other homogeneous spaces: some symplectic Grassmannians, e.g.
\begin{equation*}
\BD(\SGr(3,6)) = \langle \CB, \CB(1), \CB(2), \CB(3) \rangle, 
\qquad\text{where $\CB = \langle \CO, \CU^\vee \rangle$};
\end{equation*}
some (connected components of) orthogonal Grassmannians, e.g.
\begin{equation*}
\BD(\OGr_+(5,10)) = \langle \CB, \CB(1), \dots, \CB(7) \rangle, 
\qquad\text{where $\CB = \langle \CO, \CU^\vee \rangle$};
\end{equation*}
the Grassmannian of the simple group of type $G_2$ (the highest weight orbit in the projectivization of the adjoint representation)
\begin{equation*}
\BD(\GTGr) = \langle \CB, \CB(1), \CB(2) \rangle, 
\qquad\text{where $\CB = \langle \CO, \CU^\vee \rangle$},
\end{equation*}
where $\CU$ is the restriction of the tautological bundle under the natural embedding $\GTGr \hookrightarrow \Gr(2,7)$, see~\cite{kuznetsov2006hyperplane}.
\item\label{list-other-non-homogeneous}
Some quasihomogeneous spaces, e.g.\ a hyperplane section of $\Gr(2,2n+1)$:
\begin{equation*}
\BD(\IGr(2,2n+1)) = \langle \CB, \CB(1), \dots, \CB(2n-1) \rangle,
\qquad\text{where $\CB = \langle \CO, \CU^\vee, \dots, S^{n-1}\CU^\vee \rangle$},
\end{equation*}
see~\cite{kuznetsov2008exceptional}.
\end{enumerate}

One can also consider relative versions of the above decompositions. For example, if $\CE$ is a vector bundle
on a scheme $S$ then its projectivization $\PP_S(\CE)$ has a rectangular Lefschetz decomposition of length
equal to the rank of $\CE$ with the components equivalent to $\BD(S)$.

In general, given a minimal homogeneous space $M = G/P$ (i.e.\ with semisimple $G$ and maximal parabolic $P$)
it is expected that $\BD(M)$ has a rectangular Lefschetz decomposition as soon as the Euler characteristic of $M$
(which is equal to the rank of the Grothendieck group of $\BD(M)$ and which can be computed as the index 
of the Weyl group of $P$ in the Weyl group of $G$) is divisible by the index of $M$. For instance, 
it should exist on $\SGr(3,6n)$ and $\SGr(3,6n+4)$ for any $n$, and many others.

In some cases, when the rank of the Grothendieck group of such $M$ is not divisible by the index $i_M$, 
but they have a nontrivial common divisor $m$, it may be that there is a rectangular Lefschetz 
decomposition of length $m$ with respect to $\CO(i_M/m)$. For instance, for an even dimensional quadric $Q^{2k}$ the rank 
of the Grothendieck group is $2k+2$, while the index is $2k$, so the only nontrivial common divisor
is $2$. And indeed, if $k$ is odd $\BD(Q^{2k})$ admits a rectangular Lefschetz decomposition of length $2$ with respect to $\CO(k)$
(see case~\eqref{list-quadric} of the above list). However for even~$k$ it seems that there is no analogue for this decomposition.

Another example of this sort is $\Gr(2,6)$, when the rank of the Grothendieck group is $15$ and the index is $6$, so one can take $m = 3$,
and indeed there is a rectangular Lefschetz decomposition 
\begin{equation*}
\BD(\Gr(2,6)) = \langle \CB, \CB(2), \CB(4) \rangle,
\qquad\text{where $\CB = \langle \CO, \CU^\vee, S^2\CU^\vee, \CO(1), \CU^\vee(1) \rangle$.}
\end{equation*}

\subsection{Hypersurfaces}\label{subsection-hypersurfaces}

In this section we give explicit statements of Theorem~\ref{theorem-fcy} for hypersurfaces 
in some varieties with rectangular Lefschetz decompositions.
The first result in fact can be found in~\cite{kuznetsov2004derived}.

\begin{corollary}\label{corollary-divisor-pn}
Let $X \subset \PP^n$ be a smooth hypersurface of degree $d \le n + 1$ and $c = \gcd(d,n+1)$. 
The derived category of $X$ has a semiorthogonal decomposition 
\begin{equation*}
\BD(X) = \langle \CA_X, \CO_X, \dots, \CO_X(n-d) \rangle
\end{equation*}
and the Serre functor of $\CA_X$ has the property $\SSS_{\CA_X}^{d/c} = [(n+1)(d-2)/c]$. In particular,
if $d$ divides $n+1$ then $\CA_X$ is a Calabi--Yau category of dimension $(n+1)(d-2)/d$.
\end{corollary}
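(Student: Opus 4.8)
The plan is to deduce Corollary~\ref{corollary-divisor-pn} as a direct specialization of Theorem~\ref{theorem-fcy} (or, more precisely, of Corollary~\ref{corollary-divisor}) to the case $M = \PP^n$. First I would record that $\BD(\PP^n)$ carries the rectangular Lefschetz decomposition $\BD(\PP^n) = \langle \CO, \CO(1), \dots, \CO(n) \rangle$ from item~\eqref{list-pn} of the list in Section~\ref{subsection-lefschetz}, so here $\CB = \langle \CO \rangle$, the line bundle is $\CL_M = \CO(1)$, the length is $m = n+1$, and $\omega_{\PP^n} = \CO(-n-1) = \CL_M^{-m}$. The embedding $f:X \hookrightarrow \PP^n$ of a smooth degree-$d$ hypersurface with $1 \le d \le n+1$ is exactly the situation of Example~\ref{example-divisor}, so by Proposition~\ref{proposition-examples} the pushforward $\Phi = f_*$ is a spherical functor with $T_M = \CL_M^{-d}$ and $T_X = \CL_X^{-d}[2]$, where $\CL_X = \CO_X(1)$.

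Next I would verify the hypotheses \eqref{equation-tm-cb}, \eqref{intertwine}, \eqref{equation-tx-lx} of Theorem~\ref{theorem-fcy}: condition \eqref{equation-tm-cb} reads $T_M(\CB \otimes \CL_M^i) = \CB \otimes \CL_M^{i-d}$, which is immediate from $T_M = \CL_M^{-d}$ (and here $\CB = \langle \CO\rangle$ is a single exceptional object); condition \eqref{intertwine}, $\CL_M \circ f_* \cong f_* \circ \CL_X$, is the projection formula since $\CL_X = f^*\CL_M$; and condition \eqref{equation-tx-lx}, that $T_X$ commutes with $\CL_X$, is clear since $T_X = \CL_X^{-d}[2]$ is itself a power of $\CL_X$ up to shift. (When $d = n+1$ the functor $\Phi^*$ fails to be fully faithful, but the Remark following Theorem~\ref{theorem-fcy} covers that degenerate case with $\CA_X = \BD(X)$ and $c = d = m$.) Having checked the hypotheses, Theorem~\ref{theorem-fcy} yields the semiorthogonal decomposition $\BD(X) = \langle \CA_X, \CB_X, \CB_X \otimes \CL_X, \dots, \CB_X \otimes \CL_X^{m-d-1}\rangle$ with $\CB_X = \Phi^*(\CB)$; since $\CB = \langle\CO_{\PP^n}\rangle$ we get $\CB_X = \langle\Phi^*\CO_{\PP^n}\rangle$, and one identifies $\Phi^*\CO_{\PP^n} \cong \CO_X$ (indeed $\Phi^* = f^* = \text{L}f^*$ on structure sheaves), so the decomposition becomes $\BD(X) = \langle \CA_X, \CO_X, \dots, \CO_X(n-d)\rangle$ as $m - d - 1 = n - d$.

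For the Serre functor I would simply invoke Corollary~\ref{corollary-divisor}: with $\dim M = n$ and $c = \gcd(d, m) = \gcd(d, n+1)$ it gives
\begin{equation*}
\SSS_{\CA_X}^{d/c} = [(\dim M + 1)d/c - 2m/c] = [((n+1)d - 2(n+1))/c] = [(n+1)(d-2)/c].
\end{equation*}
Finally, if $d \mid n+1$ then $c = d$, so $d/c = 1$ and $\SSS_{\CA_X} = [(n+1)(d-2)/d]$, i.e.\ $\CA_X$ is a Calabi--Yau category of dimension $(n+1)(d-2)/d$. I do not anticipate a genuine obstacle here: the entire content is a bookkeeping check that Example~\ref{example-divisor} with $M = \PP^n$ meets the hypotheses of Theorem~\ref{theorem-fcy}, and then substitution into Corollary~\ref{corollary-divisor}. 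The only point requiring a small remark is the identification $\Phi^*\CO_{\PP^n} \cong \CO_X$ (so that the component $\CB_X$ is generated by $\CO_X$ rather than some twist), and the handling of the boundary case $d = n+1$ via the Remark after the Theorem.
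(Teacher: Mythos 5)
Your proposal is correct and matches the paper's intended derivation: the paper obtains this corollary precisely by specializing Theorem~\ref{theorem-fcy} (via Corollary~\ref{corollary-divisor}, i.e.\ Example~\ref{example-divisor} with $M=\PP^n$, $\CB=\langle\CO\rangle$, $m=n+1$) and substituting $\dim M = n$, $m = n+1$ into $[(\dim M+1)d/c - 2m/c]$. Your bookkeeping, including the identification $\CB_X = \langle \CO_X\rangle$ and the boundary case $d=n+1$ handled by the remark after the theorem, is exactly what the paper does.
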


The most famous of these cases is that of a cubic fourfold (see~\cite{kuznetsov2010derived}),
when the category $\CA_X$ can be thought of as a noncommutative K3 surface. The case of a cubic
hypersurface of dimension 7 (when $\CA_X$ is a 3-Calabi--Yau category) was discussed in~\cite{iliev2011fano}.

\begin{corollary}\label{corollary-divisor-wpn}
Let $X \subset \PP(w_0,w_1,\dots,w_n)$ be a smooth hypersurface of degree $d \le w := \sum w_i$ in a weighted 
projective space and $c = \gcd(d,w)$. 
The derived category of $X$ has a semiorthogonal decomposition 
\begin{equation*}
\BD(X) = \langle \CA_X, \CO_X, \dots, \CO_X(w-d-1) \rangle
\end{equation*}
and the Serre functor of $\CA_X$ has the property $\SSS_{\CA_X}^{d/c} = [((n+1)d - 2w)/c]$. In particular,
if $d$ divides $w$ then $\CA_X$ is a Calabi--Yau category of dimension $n+1 -2w/d$.
\end{corollary}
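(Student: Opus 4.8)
The plan is to apply Theorem~\ref{theorem-fcy} to the divisorial embedding $f\colon X \hookrightarrow M := \PP(w_0,w_1,\dots,w_n)$, using the rectangular Lefschetz decomposition of $\BD(M)$ of length $m = w$ with respect to $\CL_M = \CO_M(1)$ listed in case~\eqref{list-wpn} of Section~\ref{subsection-lefschetz} (with all components $\CB_i = \CB_0 = \langle \CO_M \rangle$). Since $X$ is cut out by a section of $\CL_M^d$ with $1 \le d \le w = m$, this is exactly Example~\ref{example-divisor}. If $d = w$ the Remark after Theorem~\ref{theorem-fcy} handles the case directly; so assume $d < w$.

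First I would verify the hypotheses of the Theorem. By Proposition~\ref{proposition-examples} (the divisorial case) the functor $\Phi = f_*$ is spherical, the spherical twist on $M$ is $T_M = \CL_M^{-d}$, and the twist on $X$ is $T_X = \CL_X^{-d}[2]$, where $\CL_X = f^*\CL_M = \CO_X(1)$. Condition~\eqref{equation-tm-cb}, namely $T_M(\CB \otimes \CL_M^i) = \CB \otimes \CL_M^{i-d}$, is immediate since $T_M$ is just tensoring by $\CL_M^{-d}$ and $\CB$ is stable under nothing but we only need the translate of the twisted subcategory, which is literal. Condition~\eqref{intertwine}, $\CL_M \circ \Phi \cong \Phi \circ \CL_X$, is the projection formula $f_*(F) \otimes \CO_M(1) \cong f_*(F \otimes f^*\CO_M(1))$. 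Condition~\eqref{equation-tx-lx}, that $T_X = \CL_X^{-d}[2]$ commutes with $\CL_X$, is obvious. Hence Theorem~\ref{theorem-fcy} applies and yields the semiorthogonal decomposition
\begin{equation*}
\BD(X) = \langle \CA_X, \CO_X, \CO_X(1), \dots, \CO_X(w-d-1) \rangle,
\end{equation*}
since $\CB_X = \Phi^*(\langle \CO_M \rangle) = \langle \CO_X \rangle$ (here I would note that $\Phi^* = Lf^*$ sends $\CO_M$ to $\CO_X$, and the decomposition has $m - d = w - d$ pieces indexed $0, \dots, w-d-1$).

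Next I would compute the Serre functor power. Theorem~\ref{theorem-fcy} gives $\SSS_{\CA_X}^{d/c} \cong \rho^{-m/c} \circ \sigma^{d/c}$ with $c = \gcd(d,w)$ and $m = w$, where $\rho = T_X \circ \CL_X^d$ and $\sigma = \SSS_X \circ T_X \circ \CL_X^m$. For a weighted projective space considered as a smooth toric stack one has $\omega_M = \CL_M^{-w}$, so $\SSS_M = \CL_M^{-w}[\dim M] = \CL_M^{-w}[n]$ (note $\dim M = n$), and $f^*\omega_M = \CL_X^{-w}$. Substituting into the formulas~\eqref{equation-divisor-rs} from the proof of Proposition~\ref{proposition-examples}: $\rho = \CL_X^{-d}[2] \circ \CL_X^d = [2]$, and $\sigma = \CL_X^{w} \circ f^*\omega_M[\dim M - 1] \circ \CL_X^{-d}[2] \circ \CL_X^{w}$; using $\dim M = n$, relative dimension $-1$ so $\SSS_X = f^*\omega_M \otimes \CL_X^{d} [n-1]$ wait — more carefully, $\SSS_X(F) = F \otimes \omega_X[\dim X]$ with $\dim X = n - 1$ and $\omega_X = f^*\omega_M \otimes \CL_X^d = \CL_X^{-w+d}$, so $\SSS_X = \CL_X^{d-w}[n-1]$. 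Then $\sigma = \CL_X^{d-w}[n-1] \circ \CL_X^{-d}[2] \circ \CL_X^{w} = [n+1]$. Therefore $\rho^{-w/c} \circ \sigma^{d/c} = [-2w/c] \circ [(n+1)d/c] = [((n+1)d - 2w)/c]$, which is the claimed formula $\SSS_{\CA_X}^{d/c} = [((n+1)d - 2w)/c]$. Finally, if $d \mid w$ then $c = d$, so $\SSS_{\CA_X} = [(n+1) - 2w/d]$, i.e.\ $\CA_X$ is an $(n+1-2w/d)$-Calabi--Yau category.

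The main obstacle is not in any individual step but in making sure the stacky setting causes no trouble: one must know that $\PP(w_0,\dots,w_n)$ has the asserted rectangular Lefschetz decomposition as a smooth toric Deligne--Mumford stack, that $X$ (a hypersurface avoiding the stacky locus, i.e.\ smooth) is again a smooth proper stack so that $\BD(X)$ has a Serre functor given by the usual formula, and that the canonical bundle formula $\omega_M = \CL_M^{-\sum w_i}$ holds in the stacky sense. All of these are standard (the Lefschetz decomposition comes from the Beilinson-type resolution for weighted projective stacks), and Theorem~\ref{theorem-fcy} was explicitly stated to cover stacks, so once these bookkeeping points are in place the corollary follows by direct substitution as above.
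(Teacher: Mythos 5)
Your proposal is correct and follows exactly the route the paper intends: specialize Theorem~\ref{theorem-fcy} (equivalently Corollary~\ref{corollary-divisor}) to the divisorial embedding of Example~\ref{example-divisor} into the weighted projective stack with its length-$w$ rectangular Lefschetz decomposition from case~\eqref{list-wpn}, with $\rho=[2]$, $\sigma=[n+1]$ giving $\SSS_{\CA_X}^{d/c}=[((n+1)d-2w)/c]$. The hypothesis checks and the stacky bookkeeping you mention are exactly the points the paper relies on (the Theorem is stated for stacks), so nothing further is needed.
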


\begin{corollary}\label{corollary-divisor-quadric}
Let $X \subset Q^{4s+2}$ be a hypersurface of degree $2s+1$ (thus $X$ is a complete intersection of type $(2,2s+1)$ in $\PP^{4s+3}$).
The derived category of $X$ has a semiorthogonal decomposition 
\begin{equation*}
\BD(X) = \langle \CA_X, \CO_X, \CO_X(1), \dots, \CO_X(2s), \CS(2s)_{|X} \rangle
\end{equation*}
and 
$\CA_X$ is a Calabi--Yau category of dimension $4s-1$.
\end{corollary}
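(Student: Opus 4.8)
The plan is to recognize $X\subset Q^{4s+2}$ as an instance of Example~\ref{example-divisor} and then to invoke Theorem~\ref{theorem-fcy}, or rather its specialization Corollary~\ref{corollary-divisor}. First I would put $M=Q^{4s+2}$ and $\CL_M=\CO_M(2s+1)$, and use the rectangular Lefschetz decomposition $\BD(M)=\langle\CB,\CB\otimes\CL_M\rangle$ of length $m=2$ recalled in case~\eqref{list-quadric} of Section~\ref{subsection-lefschetz}, where $\CB=\langle\CO,\CO(1),\dots,\CO(2s),\CS(2s)\rangle$. A smooth hypersurface of degree $2s+1$ in $Q^{4s+2}$ is exactly a divisor in the linear system $|\CL_M|$, so the inclusion $f:X\hookrightarrow M$ is the divisorial embedding of Example~\ref{example-divisor} with $d=1$, and $1\le d<m$. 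At this point I would record the numerology: $\dim M=4s+2$, $c=\gcd(d,m)=\gcd(1,2)=1$, $\CL_X:=f^*\CL_M=\CO_X(2s+1)$, and, crucially, $\omega_M=\CO_M(-4s-2)=\CL_M^{-2}=\CL_M^{-m}$ (here $\CO_X(1)$ is unambiguous since $X$ is the complete intersection of type $(2,2s+1)$ in $\PP^{4s+3}$).

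Next I would verify the hypotheses of Theorem~\ref{theorem-fcy}. By Proposition~\ref{proposition-examples} the functor $\Phi=f_*$ is spherical with $T_M=\CL_M^{-1}$ and $T_X=\CL_X^{-1}[2]$; hence \eqref{equation-tm-cb} and \eqref{equation-tx-lx} follow from \eqref{equation-divisor-twists}, while \eqref{intertwine} is the projection formula. Moreover $\omega_M=\CL_M^{-m}$ gives, as computed in the proof of Proposition~\ref{proposition-examples}, that both $\rho$ and $\sigma$ are shifts. Then Theorem~\ref{theorem-fcy}, in the form \eqref{sodx} with $m-d-1=0$, yields a semiorthogonal decomposition $\BD(X)=\langle\CA_X,\CB_X\rangle$ with $\CB_X=\Phi^*(\CB)$. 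Since $\Phi^*=Lf^*$ agrees with ordinary pullback on the vector bundles generating $\CB$, the category $\CB_X$ is generated by $\CO_X,\CO_X(1),\dots,\CO_X(2s),\CS(2s)_{|X}$, and these form an exceptional collection by the full faithfulness part of Lemma~\ref{indlcx}; this is the stated decomposition.

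Finally I would read off the Serre functor from Corollary~\ref{corollary-divisor}: with $d/c=1$ one gets $\SSS_{\CA_X}=[(\dim M+1)\cdot 1-2m]=[(4s+3)-4]=[4s-1]$, so $\CA_X$ is a Calabi--Yau category of CY-dimension $4s-1$. The argument is essentially bookkeeping once the identifications are in place; the only points requiring a little care are that the relevant line bundle is $\CL_M=\CO_M(2s+1)$ rather than $\CO_M(1)$ --- which is precisely what forces $d=1$ and makes $\omega_M=\CL_M^{-m}$ --- and the explicit identification of $\Phi^*(\CB)$ with the indicated collection of restricted bundles; I do not expect any genuine obstacle.
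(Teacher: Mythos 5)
Your proposal is correct and is exactly the intended argument: the paper states this corollary as a direct specialization of Theorem~\ref{theorem-fcy} (via Corollary~\ref{corollary-divisor} and Example~\ref{example-divisor}) with $M=Q^{4s+2}$, $\CL_M=\CO(2s+1)$, $m=2$, $d=1$, $c=1$, and your bookkeeping $[(4s+3)-4]=[4s-1]$ matches. No discrepancies with the paper's (implicit) proof.
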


The case $s = 1$ appeared in~\cite{iliev2011fano}.

\begin{corollary}\label{corollary-divisor-grkn}
Assume $\gcd(k,n) = 1$ and let $X \subset \Gr(k,n)$ be a hypersurface of degree $d \le n$ and $c = \gcd(d,n)$. 
The derived category of $X$ has a semiorthogonal decomposition 
\begin{equation*}
\BD(X) = \langle \CA_X, \CB_X, \CB_X(1), \dots, \CB_X(n-d-1) \rangle,
\end{equation*}
where the category $\CB$ is described in part~\eqref{list-gr} of Section~\ref{subsection-lefschetz}.
The Serre functor of $\CA_X$ has the property $\SSS_{\CA_X}^{d/c} = [(k(n-k)+1)d/c-2n/c]$. 
In particular, if $d$ divides $n$ then $\CA_X$ is a Calabi--Yau category of dimension $k(n-k)+1-2n/d$.
\end{corollary}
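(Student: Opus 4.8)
The plan is to specialize Theorem~\ref{theorem-fcy}, in the form of Corollary~\ref{corollary-divisor}, to the pair $M = \Gr(k,n)$, $\CL_M = \CO(1)$ (the ample generator of $\Pic\Gr(k,n)$), and $X \subset \Gr(k,n)$ the smooth degree $d$ hypersurface, regarded as the divisorial embedding $f: X \hookrightarrow M$ of Example~\ref{example-divisor} with $f(X)$ in the linear system $\CL_M^d$. Everything then reduces to checking the hypotheses of that construction, the one substantive input being the structure of $\BD(\Gr(k,n))$.

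First I would assemble the ingredients. By part~\eqref{list-gr} of Section~\ref{subsection-lefschetz}, i.e.\ Fonarev's theorem~\cite{fonarev2013minimal}, the assumption $\gcd(k,n) = 1$ guarantees that $\BD(\Gr(k,n)) = \langle \CB, \CB(1), \dots, \CB(n-1) \rangle$ is a \emph{rectangular} Lefschetz decomposition of length $m = n$ with respect to $\CL_M = \CO(1)$, with $\CB$ the category spanned by the Schur-functor exceptional collection recalled there. I would also record that $\dim M = k(n-k)$ and $\omega_M = \CO(-n) = \CL_M^{-m}$. Next, Proposition~\ref{proposition-examples} applied to Example~\ref{example-divisor} shows that $\Phi = f_*$ is a spherical functor with spherical twists $T_M = \CL_M^{-d}$ and $T_X = \CL_X^{-d}[2]$, where $\CL_X = f^*\CO(1)$. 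Since $1 \le d \le n = m$, this yields at once condition~\eqref{equation-tm-cb} (the twist $T_M = \CL_M^{-d}$ shifts the Lefschetz grading by $-d$), condition~\eqref{intertwine} (the projection formula), and condition~\eqref{equation-tx-lx} (as $T_X$ is a shift of a line-bundle twist); moreover $\omega_M = \CL_M^{-m}$ forces the functors $\rho$ and $\sigma$ of~\eqref{equation-sigma} and~\eqref{equation-tau} to be pure shifts, as observed in the proof of Proposition~\ref{proposition-examples}.

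With these in hand, Theorem~\ref{theorem-fcy} (or, when $d = n$, the Remark following it, in which case $n - d - 1 < 0$ and $\CA_X = \BD(X)$) gives the semiorthogonal decomposition $\BD(X) = \langle \CA_X, \CB_X, \CB_X(1), \dots, \CB_X(n-d-1) \rangle$ with $\CB_X = \Phi^*(\CB) = f^*(\CB)$, which is exactly the category $\CB$ of part~\eqref{list-gr} pulled back to $X$. Corollary~\ref{corollary-divisor} then evaluates the Serre functor: with $c = \gcd(d,m) = \gcd(d,n)$ it reads $\SSS_{\CA_X}^{d/c} = [(\dim M + 1)d/c - 2m/c] = [(k(n-k)+1)d/c - 2n/c]$. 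The final assertion is the substitution $c = d$, valid precisely when $d \mid n$, which turns this into $\SSS_{\CA_X} = [k(n-k)+1 - 2n/d]$, i.e.\ $\CA_X$ is a Calabi--Yau category of dimension $k(n-k) + 1 - 2n/d$.

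I do not expect a genuine obstacle inside this proof: once the general machinery of Section~\ref{section-construction} is available, the corollary is a mechanical verification of the hypotheses of Theorem~\ref{theorem-fcy} together with a substitution of $\dim M$ and $m$ into the formula of Corollary~\ref{corollary-divisor}. The only nontrivial point is the rectangularity (and fullness) of the Lefschetz collection on $\Gr(k,n)$ — this is where coprimality of $k$ and $n$ is used, and it is imported directly from~\cite{fonarev2013minimal}. If one wanted a self-contained account the main work would be reproving Fonarev's theorem, not the corollary itself.
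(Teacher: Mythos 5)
Your proposal is correct and is exactly the route the paper intends: the corollary is stated as a direct specialization of Theorem~\ref{theorem-fcy} (via Corollary~\ref{corollary-divisor} and Example~\ref{example-divisor}) to Fonarev's rectangular Lefschetz decomposition of $\BD(\Gr(k,n))$ of length $m=n$, with $\dim M = k(n-k)$ and $\omega_M = \CL_M^{-n}$, and the hypothesis checks you list are the ones the paper records in the paragraph following the Theorem. Your handling of the edge case $d=m$ via the Remark after the Theorem is also consistent with the paper.
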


\begin{corollary}\label{corollary-divisor-ogr2}
Let $X \subset \OGr(2,2n+1)$ be a hypersurface of degree $d \le 2n-2$ and $c = \gcd(d,2n-2)$. 
The derived category of $X$ has a semiorthogonal decomposition 
\begin{equation*}
\BD(X) = \langle \CA_X, \CB_X, \CB_X(1), \dots, \CB_X(2n-3-d) \rangle,
\end{equation*}
where the category $\CB$ is described in part~\eqref{list-ogr} of Section~\ref{subsection-lefschetz}.
The Serre functor of $\CA_X$ has the property $\SSS_{\CA_X}^{d/c} = [4(n-1)(d-1)/c]$. 
In particular, if $d$ divides $2n-2$ then $\CA_X$ is a Calabi--Yau category of dimension $4(n-1)(d-1)/d$.
\end{corollary}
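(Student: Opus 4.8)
The plan is to obtain this as a direct application of Corollary~\ref{corollary-divisor} to the divisorial embedding $f\colon X \hookrightarrow M$ with $M = \OGr(2,2n+1)$, in exactly the same spirit as the preceding corollaries of this subsection. The first step is to assemble the three geometric facts about $M$ that the machinery needs: by part~\eqref{list-ogr} of Section~\ref{subsection-lefschetz} (following~\cite{kuznetsov2008exceptional}), $\BD(M)$ admits the rectangular Lefschetz decomposition
\[
\BD(M) = \langle \CB, \CB(1), \dots, \CB(2n-3) \rangle
\]
of length $m = 2n-2$ with respect to $\CL_M = \CO(1)$, with $\CB$ the category of part~\eqref{list-ogr}; the Fano index of $\OGr(2,2n+1)$ equals $2n-2$, so that $\omega_M = \CL_M^{-m}$; and $\dim \OGr(2,2n+1) = 4n-5$. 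The first of these is an external input and is cited; the index and the dimension are standard computations for isotropic Grassmannians.

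With these in hand the second step is routine. Since $X \subset M$ is a smooth hypersurface of degree $d$ with $1 \le d \le m$, the functor $\Phi = f_*\colon \BD(X)\to\BD(M)$ is the pushforward of the type considered in Example~\ref{example-divisor}; by Proposition~\ref{proposition-examples} it is spherical with $T_M = \CL_M^{-d}$ and $T_X = \CL_X^{-d}[2]$ (where $\CL_X = f^*\CL_M$), so hypotheses~\eqref{equation-tm-cb}, \eqref{intertwine} and~\eqref{equation-tx-lx} of Theorem~\ref{theorem-fcy} hold automatically. For $1 \le d \le m-1 = 2n-3$, Theorem~\ref{theorem-fcy} then produces the claimed decomposition $\BD(X) = \langle \CA_X, \CB_X, \dots, \CB_X(2n-3-d)\rangle$ with $\CB_X = \Phi^*(\CB)$, and Corollary~\ref{corollary-divisor} gives $\SSS_{\CA_X}^{d/c} \cong [((\dim M+1)d - 2m)/c]$ with $c = \gcd(d,2n-2)$. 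Substituting $\dim M + 1 = 4(n-1)$ and $2m = 4(n-1)$, the bracketed integer collapses to $(4(n-1)d - 4(n-1))/c = 4(n-1)(d-1)/c$, which is the assertion; if $d \mid 2n-2$ then $c = d$ and $\SSS_{\CA_X}\cong[4(n-1)(d-1)/d]$, so $\CA_X$ is a Calabi--Yau category of that dimension. The boundary case $d = m = 2n-2$ is covered by the Remark following Theorem~\ref{theorem-fcy}: there $\CA_X = \BD(X)$, $c = d = m$, and the same formula returns $\SSS_X \cong [2(2n-3)]$, consistent with $\omega_X \cong \CO_X$ and $\dim X = 4n-6$.

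No step presents a real obstacle: everything reduces to Corollary~\ref{corollary-divisor} once the Lefschetz decomposition of $\BD(\OGr(2,2n+1))$ is granted. The only point that requires care is the numerology --- checking that both the coindex $\dim M + 1$ and twice the Lefschetz length $2m$ equal $4(n-1)$, and that the index is $2n-2$ so that $\omega_M = \CL_M^{-m}$. It is precisely this coincidence that makes $\SSS_{\CA_X}^{d/c}$ a pure shift rather than a shift composed with a nontrivial autoequivalence.
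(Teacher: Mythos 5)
Your proposal is correct and matches the paper's (implicit) argument: the corollary is exactly the specialization of Corollary~\ref{corollary-divisor} (i.e.\ Example~\ref{example-divisor} plus Theorem~\ref{theorem-fcy}) to the rectangular Lefschetz decomposition of $\OGr(2,2n+1)$ of length $m=2n-2$ from part~\eqref{list-ogr}, using $\dim M = 4n-5$ and $\omega_M = \CL_M^{-(2n-2)}$, so that $((\dim M+1)d-2m)/c = 4(n-1)(d-1)/c$. Your treatment of the boundary case $d=m$ via the remark after Theorem~\ref{theorem-fcy} is also consistent with the paper.
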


We leave the reader to formulate analogous results in other cases.

\subsection{Double coverings}\label{subsection-covers}

Here we restrict to stating what happens for double covers of projective spaces and Grassmannians.
The reader is welcome to formulate the other results.

\begin{corollary}\label{corollary-cover-pn}
Let $X \to \PP^n$ be a double covering ramified in a smooth hypersurface of degree $2d$ with $d \le n + 1$ and let $c = \gcd(d,n+1)$. 
Let $\tau$ be the involution of the double covering.
The derived category of $X$ has a semiorthogonal decomposition 
\begin{equation*}
\BD(X) = \langle \CA_X, \CO_X, \dots, \CO_X(n-d) \rangle
\end{equation*}
and the Serre functor of $\CA_X$ has the property $\SSS_{\CA_X}^{d/c} = \tau^{(n+1-d)/c}[(n+1)(d-1)/c]$. In particular,
if $d$ divides $n+1$ and $(n+1)/d$ is odd then $\CA_X$ is a Calabi--Yau category of dimension $(n+1)(d-1)/d$.
\end{corollary}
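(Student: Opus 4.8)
The plan is to derive this as the specialization of Corollary~\ref{corollary-covering} (hence of Theorem~\ref{theorem-fcy} and Proposition~\ref{proposition-examples}) to the case $M = \PP^n$ equipped with the standard (Beilinson) rectangular Lefschetz decomposition. First I would fix the data: take $\CL_M = \CO_{\PP^n}(1)$, the ample generator of $\Pic(\PP^n)$, and the rectangular Lefschetz decomposition $\BD(\PP^n) = \langle \CO, \CO(1), \dots, \CO(n)\rangle$ of length $m = n+1$, so that $\CB = \langle \CO_{\PP^n}\rangle$ and $\CB \otimes \CL_M^i = \langle \CO(i)\rangle$. Since $\omega_{\PP^n} = \CO(-n-1) = \CL_M^{-m}$, the normalization hypothesis $\omega_M = \CL_M^{-m}$ of Corollaries~\ref{corollary-divisor}--\ref{corollary-root} is met.

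Next I would invoke Proposition~\ref{proposition-examples} for the map $f : X \to \PP^n$ of Example~\ref{example-covering}, the double cover branched in a divisor of $|\CO(2d)| = |\CL_M^{2d}|$ with $1 \le d \le n+1 = m$: it shows $\Phi = f_*$ is spherical, that $T_X$ commutes with $\CL_X = f^*\CO(1) = \CO_X(1)$, and (using $\omega_M = \CL_M^{-m}$) that $\rho^2$ and $\sigma^2$ are shifts; moreover the compatibility conditions \eqref{equation-tm-cb}, \eqref{intertwine}, \eqref{equation-tx-lx} of Theorem~\ref{theorem-fcy} hold for this example, as noted right after the Theorem, by \eqref{equation-covering-twists}. (In the boundary case $d = m = n+1$ I would instead cite the Remark after the Theorem, which gives $\CA_X = \BD(X)$ and the same Serre-functor formula.) Applying Theorem~\ref{theorem-fcy} then yields $\BD(X) = \langle \CA_X, \CB_X, \CB_X \otimes \CL_X, \dots, \CB_X \otimes \CL_X^{m-d-1}\rangle$; since $\CB_X = \Phi^*(\CB) = f^*\langle \CO_{\PP^n}\rangle = \langle \CO_X\rangle$, $\CL_X = \CO_X(1)$, and $m - d - 1 = n - d$, this is exactly $\BD(X) = \langle \CA_X, \CO_X, \dots, \CO_X(n-d)\rangle$.

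For the Serre functor I would simply quote Corollary~\ref{corollary-covering} with $\dim M = n$ and $m = n+1$, which gives $\SSS_{\CA_X}^{d/c} = \tau^{(m-d)/c}[(\dim M + 1)d/c - m/c] = \tau^{(n+1-d)/c}\bigl[(n+1)d/c - (n+1)/c\bigr]$, and then rewrite $(n+1)d/c - (n+1)/c = (n+1)(d-1)/c$. Finally, for the ``in particular'' clause: if $d \mid n+1$ then $c = \gcd(d,n+1) = d$, so $d/c = 1$ and $\SSS_{\CA_X} = \tau^{(n+1-d)/d}[(n+1)(d-1)/d] = \tau^{(n+1)/d - 1}[(n+1)(d-1)/d]$; if moreover $(n+1)/d$ is odd then $(n+1)/d - 1$ is even, and since $\tau^2 = \id$ the twist disappears, giving $\SSS_{\CA_X} = [(n+1)(d-1)/d]$, i.e.\ $\CA_X$ is $\bigl((n+1)(d-1)/d\bigr)$-Calabi--Yau.

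There is essentially no real obstacle here: all the content sits in Theorem~\ref{theorem-fcy} and Proposition~\ref{proposition-examples}. The only points needing a word of care are that a generic such double cover $X$ is smooth and projective so the setup applies; that $\tau$ genuinely descends to an autoequivalence of $\CA_X$ (it preserves the Lefschetz collection \eqref{lefx}, e.g.\ by Lemma~\ref{lemma-sigma-bx} and the description \eqref{equation-covering-twists}); and that one tracks the parity of the exponent of $\tau$ correctly, since it is precisely this parity that forces the extra hypothesis ``$(n+1)/d$ odd'' in the Calabi--Yau conclusion.
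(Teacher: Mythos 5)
Your proposal is correct and is exactly the route the paper takes: the corollary is stated without separate proof precisely because it is the specialization of Theorem~\ref{theorem-fcy} (via Proposition~\ref{proposition-examples} and Corollary~\ref{corollary-covering}) to $M=\PP^n$ with the Beilinson decomposition $\langle\CO,\dots,\CO(n)\rangle$, $m=n+1$, $\CB=\langle\CO\rangle$, $\omega_M=\CL_M^{-m}$. Your handling of the boundary case $d=m$ via the remark after the theorem and of the parity of the $\tau$-exponent in the ``in particular'' clause is also exactly as intended.
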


The case $n = 5$, $d = 2$ appeared in~\cite{iliev2011fano}.

\begin{corollary}\label{corollary-cover-grkn}
Assume that $\gcd(k,n) = 1$ and let $X \to \Gr(k,n)$ be a double covering ramified in a smooth hypersurface of degree $2d$ with $d \le n$ and let $c = \gcd(d,n)$. 
Let $\tau$ be the involution of the double covering.
The derived category of $X$ has a semiorthogonal decomposition 
\begin{equation*}
\BD(X) = \langle \CA_X, \CB_X, \dots, \CB_X(n-d-1) \rangle
\end{equation*}
where the category $\CB$ is described in part~\eqref{list-gr} of Section~\ref{subsection-lefschetz},
and the Serre functor of $\CA_X$ has the property $\SSS_{\CA_X}^{d/c} = \tau^{(n-d)/c}[(k(n-k)+1)d/c - n/c]$. In particular,
if $d$ divides $n$ and $n/d$ is odd then $\CA_X$ is a Calabi--Yau category of dimension $k(n-k) + 1 - n/d$.
\end{corollary}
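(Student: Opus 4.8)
The plan is to deduce this statement by a direct application of Theorem~\ref{theorem-fcy}, using the data of Example~\ref{example-covering} with $M = \Gr(k,n)$ and the rectangular Lefschetz decomposition from part~\eqref{list-gr} of Section~\ref{subsection-lefschetz}. Concretely, since $\gcd(k,n) = 1$, the Grassmannian $\Gr(k,n)$ admits the rectangular Lefschetz decomposition $\BD(\Gr(k,n)) = \langle \CB, \CB(1), \dots, \CB(n-1) \rangle$ with respect to $\CL_M = \CO(1)$, of length $m = n$, where $\CB = \langle \Sigma^\alpha\CU^\vee \mid \dots \rangle$ is the Fonarev collection. Moreover $\omega_M = \CO(-n)$, so the hypothesis $\omega_M = \CL_M^{-m}$ of the discussion following Theorem~\ref{theorem-fcy} holds.

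First I would verify that $\Phi = f_*\colon \BD(X) \to \BD(\Gr(k,n))$ is a spherical functor and record the relevant twists: this is exactly the content of Proposition~\ref{proposition-examples} applied to Example~\ref{example-covering}, which gives $T_M = \CL_M^{-d}[-1]$ and $T_X = \tau \circ \CL_X^{-d}[1]$, where $\CL_X = f^*\CO(1)$. Next I would check the three hypotheses \eqref{equation-tm-cb}, \eqref{intertwine}, \eqref{equation-tx-lx} of the Theorem. The intertwining \eqref{intertwine} is the projection formula; condition \eqref{equation-tm-cb}, namely $T_M(\CB\otimes\CL_M^i) = \CB \otimes \CL_M^{i-d}$, is immediate from $T_M = \CL_M^{-d}[-1]$; and $T_X$ commutes with $\CL_X$ because $\tau$ fixes $\CL_X = f^*\CO(1)$ up to isomorphism. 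The remaining numerical input is that $1 \le d < m = n$ and $c = \gcd(d,n) = \gcd(d,m)$.

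Then Theorem~\ref{theorem-fcy} directly yields the semiorthogonal decomposition $\BD(X) = \langle \CA_X, \CB_X, \CB_X(1), \dots, \CB_X(n-d-1)\rangle$ with $\CB_X = \Phi^*(\CB)$, together with the formula $\SSS_{\CA_X}^{d/c} \cong \rho^{-m/c}\circ\sigma^{d/c}$. It remains to substitute the explicit values of $\rho$ and $\sigma$ computed for Example~\ref{example-covering} in equation~\eqref{equation-covering-rs}: with $\omega_M = \CL_M^{-m}$ one gets $\rho = \tau[1]$ and $\sigma = \tau\,[\dim M + 1] = \tau\,[k(n-k)+1]$. Plugging in,
\[
\SSS_{\CA_X}^{d/c} \cong \tau^{-m/c}[-m/c] \circ \tau^{d/c}[(k(n-k)+1)d/c] = \tau^{(d-m)/c}[(k(n-k)+1)d/c - n/c],
\]
and since $\tau^2 = \id$ we may rewrite the power of $\tau$ as $\tau^{(n-d)/c}$, giving the claimed formula $\SSS_{\CA_X}^{d/c} = \tau^{(n-d)/c}[(k(n-k)+1)d/c - n/c]$. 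Finally, if $d \mid n$ then $c = d$, so $\SSS_{\CA_X} = \tau^{(n-d)/d}[k(n-k)+1-n/d]$; if in addition $n/d$ is odd then $(n-d)/d = n/d - 1$ is even, so $\tau^{(n-d)/d} = \id$ and $\CA_X$ is a genuine Calabi--Yau category of dimension $k(n-k)+1-n/d$.

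The main obstacle, really the only nontrivial point, is confirming that the Fonarev decomposition of $\BD(\Gr(k,n))$ is genuinely a \emph{rectangular} Lefschetz decomposition with $m = n$ (so that the length matches $-\omega_M$) --- but this is precisely the cited result~\cite{fonarev2013minimal}, so in the present paper it is used as a black box. Everything else is bookkeeping with the formulas of Proposition~\ref{proposition-examples} and Theorem~\ref{theorem-fcy}, and a parity check on the exponent of $\tau$.
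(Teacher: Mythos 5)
Your proposal is correct and follows essentially the paper's own route: the paper obtains this corollary by specializing Corollary~\ref{corollary-covering} (which is exactly Theorem~\ref{theorem-fcy} combined with the computation $\rho=\tau[1]$, $\sigma=\tau[\dim M+1]$ of Example~\ref{example-covering}) to $M=\Gr(k,n)$ with the Fonarev rectangular decomposition, $m=n$ and $\dim M=k(n-k)$, which is the same substitution and parity check you perform. The only marginal point is the boundary case $d=n$ permitted by the statement, which falls outside the hypothesis $d<m$ of the Theorem but is covered by the Remark following it, so your restriction to $1\le d<n$ loses nothing essential.
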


One of the interesting cases here is formed by double covers of $\Gr(2,5)$ (i.e\ $k = 2$, $n = 5$, $d = 1$),
known as Gushel--Mukai 6-folds. See~\cite{kuznetsov2014derived,kuznetsov2015gushel} for more details.

\subsection{K3 categories}\label{subsection-k3}

Let us list the cases when the category $\CA_X$ is a 2-Calabi--Yau category:

\begin{itemize}
\item a cubic fourfold $X_3 \subset \PP^5$;
\item a hyperplane section $X_1 \subset \Gr(3,10)$ (Debarre--Voisin varieties, see~\cite{debarre2010hyper});
\item a double cover $X_2 \to \Gr(2,5)$ ramified in a quadratic section (Gushel--Mukai varieties, see~\cite{kuznetsov2014derived}).
\end{itemize}

In all these cases one can check that the category $\CA_X$ has the same Hochschild homology as the derived category of a K3 surface.
Moreover, for special cubic fourfolds the category $\CA_{X_3}$ is equivalent to $\BD^b(S)$ for a K3 surface $S$ (see~\cite{kuznetsov2010derived})
and the same is expected to be true for some Gushel--Mukai sixfolds (see~\cite{kuznetsov2015gushel}).
It is also expected that the same is true for special Debarre--Voisin varieties. Thus, it is natural to consider 
these categories as noncommutative K3 surfaces (or as K3 categories).

\begin{remark}\label{remark-gushel-mukai}
In the last example one can replace $\Gr(2,5)$ by its linear section $M$ of codimension $k \le 3$ and then
for odd $k$ take $X$ to be a quadric section of $M$ and for even $k$ take $X$ to be a double
covering of $M$ ramified in a quadric. In all these cases $\CA_X$ is a K3 category (\cite{kuznetsov2015gushel}).
\end{remark}

One of the interesting properties K3 surfaces have, is that moduli spaces of sheaves on them carry a symplectic structure,
and so when smooth and compact they are hyper-k\"ahler varieties. One can use K3 categories
in the same way. In fact, it was shown in~\cite{kuznetsov2009symplectic} that any moduli space of sheaves
on a cubic fourfold $X_3$ carries a closed 2-form, and if all the sheaves parameterized by this moduli space
are objects of the category $\CA_{X_3}$, then the 2-form is nondegenerate. The same argument can be applied
to any K3 category to show that a moduli space of objects in it carries a symplectic form. This allows
constructing new examples of hyper-k\"ahler varieties. In case of $X_3$ this gives the classical Beauville--Donagi
fourfold~\cite{beauville1985variete} or a more recent eightfold~\cite{lehn2015twisted}. Applied to $X_2$
this gives a double EPW sextic~\cite{iliev2009fano} and for $X_1$ presumably one can get the Debarre--Voisin
fourfold~\cite{debarre2010hyper}. Other moduli spaces and other examples of K3 categories may give new hyper-k\"ahler varieties.

However, finding other examples of noncommutative K3 categories
seems to be a difficult problem.
For instance, one can obtain a long list 
of hypersurfaces $X$ in weighted projective spaces with $\CA_X$ being a K3 category. But it looks as most of them 
are equivalent to derived categories of K3 surfaces, or reduce to one of the three above examples.

For instance, one can take a degree 4 hypersurface $X_4 \subset \PP(1,1,1,1,1,3)$. But the equation of $X_4$
after appropriate change of coordinates necessarily takes form
$x_5x_4 + f_4(x_0,\dots,x_4) = 0$.
Then $X_4$ can be obtained from $\PP^4$ by the blowup of the surface 
$S = \{ x_4 = f_4(x_0,\dots,x_4) = 0 \}$
followed by the contraction of the proper preimage of the hyperplane $\{x_4 = 0\}$. This allows to show
that $\CA_{X_4} \cong \BD(S)$.

\subsection{3-Calabi--Yau categories}\label{subsection-3cy}

As Calabi--Yau threefolds are of a special interest for physics, let us also list some examples of varieties, containing a 3-Calabi--Yau category:

\begin{itemize}
\item a cubic 7-fold $X_3 \subset \PP^8$;
\item an intersection of a quadric and a cubic $X_{2,3} \subset \PP^7$;
\item an intersection of $\Gr(2,6)$ and a quadric $X_{2} \subset \Gr(2,6)$;
\item a hyperplane section $X_1 \subset \Gr(3,11)$;
\item a hyperplane section $X'_1 \subset \Gr(4,9)$;
\item an intersection of $\SGr(3,6)$ with a quadric $X'_2 \subset \SGr(3,6)$;
\item an intersection of $\OGr_+(5,10)$ with a quadric $X''_2 \subset \OGr_+(5,10) \subset \PP^{15}$;
\item an intersection of $\PP^3\times\PP^3 \subset \PP^{15}$ with a quadric $X'''_2 \subset \PP^3 \times \PP^3$;
\item a double covering $X''''_2 \to \PP^5$ ramified in a quartic;
\item a double covering $X'''''_2 \to \GTGr$ ramified in a quadric.
\end{itemize}

\begin{remark}
Note that a quadric for $\OGr_+(5,10)$ in the spinor embedding corresponds to a Pl\"ucker hyperplane.
Also like in Remark~\ref{remark-gushel-mukai} one can take $M$ to be a general (spinor) linear section of $\OGr_+(5,10)$ 
of codimension $k \le 5$ and then for even $k$ take $X$ to be a quadric section of $M$, and for odd $k$ take $X$ 
to be the double covering of $M$ ramified in a quadric. In all cases we will get a 3CY category (this is analogous 
to Gushel--Mukai varieties). Similarly, one can take $M$ to be a general hyperplane section of $\SGr(3,6)$ or $\PP^3\times\PP^3$
and take $X$ to be the double covering of $M$ ramified in a quadric.
\end{remark}

\begin{remark}
In~\cite{iliev2011fano} there are other examples of Hodge-theoretic 3CY Fano varieties.
For most of these varieties $X$ there is indeed a semiorthogonal component $\CA_X \subset \BD(X)$
which is 3-Calabi--Yau, but it is equivalent to $\BD(Y)$ for a certain Calabi--Yau 3-fold.
In the following table we list in the left column such Fano varieties and in the right column
the corresponding Calabi--Yau 3-folds.
\begin{equation*}
\begin{array}{|c|c|}
\hline
\text{Variety from~\cite{iliev2011fano}} & \text{Corresponding CY $3$-fold} \\
\hline
(\PP^1)^6 \cap H & (\PP^1)^5 \cap H_1 \cap H_2 \\
\hline
((\PP^1)^3 \times \PP^3) \cap H & (\PP^1 \times \PP^1 \times \PP^3) \cap H_1 \cap H_2 \\
\hline
(\PP^2)^4 \cap H & (\PP^2)^3 \cap H_1 \cap H_2 \cap H_3 \\
\hline
(\PP^4)^3 \cap H & (\PP^4)^2 \cap H_1 \cap H_2 \cap H_3 \cap H_4 \cap H_5 \\
\hline
(\Gr(2,5) \times \Gr(2,5)) \cap H & \Gr(2,5) \cap \Gr(2,5) \\
\hline
\end{array}
\end{equation*}
Here $H$ and $H_i$ denote general hyperplanes in a natural embedding --- in the first two lines with respect to one half of the anticanonical divisor,
in the third line with respect to one third of the anticanonical divisor, and in the last two lines with respect to one fifth of the anticanonical divisor. 
The two Grassmannians in the last cell of the table are considered as embedded into the same $\PP^9$ but in a different way (in other words,
the second Grassmannian is the image of the first Grassmannian under a general element of the group $\PGL(10)$ acting naturally on $\PP^9$).

Similarly, the category appearing in $\BD(X'''_2)$ (a quadric section of $\PP^3\times\PP^3$) can be shown to be equivalent
to the twisted derived category of a small resolution of singularities of a special octic double solid (which is a Calabi--Yau 3-fold).
\end{remark}

\begin{remark}
Other examples from~\cite[Thm.~4.4]{iliev2011fano} can be explained by homological projective duality (see~\cite{kuznetsov2007homological,kuznetsov2014semiorthogonal}). 
It seems that the homological projective dual
for $\OO\PP^2$ is the Cartan cubic in $\PP^{26}$,
for $\SS_{12}$ is the double covering of $\PP^{31}$ ramified in the Igusa quartic,
for $\Gr(2,10)$ is the Pfaffian quintic, and
for $\SS_{14}$ is the double covering of $\PP^{63}$  ramified in the Popov octic. Then by HPD the nontrivial components of their linear sections
are equivalent to the nontrivial components of the corresponding linear sections of their dual varieties. Thus the examples in~\cite[Thm.~4.4]{iliev2011fano}
should reduce to a 7-dimensional cubic, a quartic double $\PP^5$, a quintic in $\PP^4$, and octic double $\PP^3$ respectively.
The first two of them are in our list, and the last two are Calabi--Yau 3-folds.
\end{remark}

The only example in~\cite{iliev2011fano} not covered by our approach is the double cover of $\IGr(2,6)$ ramified in a quadric.
It would be interesting to find out, whether or not it has a 3-Calabi--Yau subcategory. For this the results mentioned in Remark~\ref{remark-non-rectangular}
might be useful.

\section{Calabi--Yau categories}\label{section-properties}

\subsection{Indecomposability}\label{subsection-indecomposability}

One of the fundamental properties of Calabi--Yau categories is indecomposability.
It can be proved by a simple generalization of the beautiful argument of Bridgeland~\cite{bridgeland1999equivalences}.

Recall that a triangulated category $\CT$ is called {\sf connected} if $\HOH^0(\CT) = \kk$
(if $\CT = \BD(X)$ then by Hochschild--Kostant--Rosenberg isomorphism one has $\HOH^0(\CT) = H^0(X,\CO_X)$,
so $\CT$ is connected if and only if $X$ is).

\begin{proposition}
If $\CT \subset \BD(X)$ is a Calabi--Yau admissible subcategory then any semiorthogonal decomposition of $\CT$ is completely orthogonal.
In particular, if $\CT$ is connected then $\CT$ is indecomposable.
\end{proposition}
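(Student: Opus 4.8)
The plan is to show that if $\CT = \langle \CA, \CB \rangle$ is any semiorthogonal decomposition with $\CA, \CB$ nonzero, then in fact $\Hom(\CB, \CA) = 0$, so that the decomposition is completely orthogonal; the indecomposability statement then follows immediately from Lemma~\ref{lemma-choh-additivity}, since a completely orthogonal decomposition $\CT = \langle \CA, \CB \rangle$ with both factors nonzero forces $\HOH^0(\CT) \supseteq \HOH^0(\CA) \oplus \HOH^0(\CB)$, hence $\dim \HOH^0(\CT) \ge 2 > 1 = \dim \kk$, contradicting connectedness.

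The key input is that $\CT$ is Calabi--Yau, so $\SSS_\CT \cong [n]$ for some $n$, and in particular $\SSS_\CT$ acts trivially (up to shift) on every subcategory, so it cannot move objects between the two pieces of a semiorthogonal decomposition. Concretely, by Lemma~\ref{lemma-serre-basics}(ii) the Serre functor commutes with autoequivalences, and since $\CA, \CB \subset \CT$ are admissible, Lemma~\ref{lemma-serre-mutation} gives $\SSS_\CB = \RR_\CA \circ \SSS_\CT$ and $\SSS_\CA^{-1} = \LL_\CB \circ \SSS_\CT^{-1}$. Because $\SSS_\CT \cong [n]$ is just a shift, we get $\SSS_\CA \cong [n]$ and $\SSS_\CB \cong [n]$ as well; i.e.\ both components are $n$-Calabi--Yau. (This is the step that makes Bridgeland's argument go through.) Now suppose for contradiction that $\Hom(\CB, \CA) \ne 0$: there exist $B \in \CB$, $A \in \CA$, and a nonzero morphism $B \to A[k]$ for some $k$. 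Applying the Serre duality of $\CT$, $\Hom(B, A[k]) \cong \Hom(A[k], \SSS_\CT B)^\vee \cong \Hom(A, B[n-k])^\vee$. But $\Hom(A, B[n-k]) = 0$ because $\Hom(\CA, \CB) = 0$ by semiorthogonality — wait, this is the wrong direction; $\Hom(\CA,\CB)$ need not vanish. So instead I use Serre duality of $\CT$ in the form $\Hom(B, A[k])^\vee \cong \Hom(A[k], \SSS_\CT(B)) = \Hom(A, B[n-k])$, and then the point is that $\SSS_\CT(B) = B[n] \in \CB$, while semiorthogonality $\Hom(\CA, \CB) = 0$ kills $\Hom(A, B[n-k])$. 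Hence $\Hom(B, A[k]) = 0$ for all $k$, i.e.\ $\Hom(\CB, \CA) = 0$, contradiction; so the decomposition is completely orthogonal.

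Let me restate that cleanly. Take any $A \in \CA$, $B \in \CB$. Serre duality in $\CT$ gives a bifunctorial isomorphism $\Hom_\CT(B, A)^\vee \cong \Hom_\CT(A, \SSS_\CT B) = \Hom_\CT(A, B[n])$, and the right-hand side vanishes because $A \in \CA$, $B[n] \in \CB$ and $\Hom(\CA, \CB) = 0$ by the definition of a semiorthogonal decomposition $\langle \CA, \CB\rangle$ (orthogonality in the order $i > j$). Therefore $\Hom_\CT(B, A) = 0$, and since $A, B$ were arbitrary this says $\Hom(\CB, \CA) = 0$, so together with $\Hom(\CA, \CB) = 0$ the decomposition $\CT = \langle \CA, \CB \rangle$ is completely orthogonal. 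The general case of a decomposition into $m$ parts follows by grouping: any $\CT = \langle \CA_1, \dots, \CA_m\rangle$ can be viewed as $\langle \CA_1, \langle \CA_2, \dots, \CA_m\rangle\rangle$, apply the two-step result, and induct.

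For the indecomposability conclusion: if $\CT$ is connected and admitted a nontrivial decomposition $\CT = \langle \CA, \CB\rangle$ with $\CA, \CB \ne 0$, the above shows it is completely orthogonal, so by Lemma~\ref{lemma-choh-additivity} we would have $\HOH^0(\CT) = \HOH^0(\CA) \oplus \HOH^0(\CB)$ with both summands nonzero (again by the nonvanishing clause of Lemma~\ref{lemma-choh-additivity}), forcing $\dim_\kk \HOH^0(\CT) \ge 2$, contradicting $\HOH^0(\CT) = \kk$. I expect the only real subtlety to be bookkeeping: making sure that $\CA$ and $\CB$ inherit Serre functors (which requires them to be admissible — true for admissible $\CT\subset\BD(X)$ and its semiorthogonal factors, by the discussion after Lemma~\ref{lemma-kernels-projectors}) and that Lemma~\ref{lemma-choh-additivity}'s hypotheses are genuinely met, i.e.\ that the completely orthogonal decomposition is one of admissible subcategories with nonzero projection kernels. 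None of these is hard; the essential mathematical content is the one-line Serre-duality argument in the previous paragraph.
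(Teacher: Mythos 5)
Your proposal is correct and takes essentially the same route as the paper: a one-line Serre-duality computation using $\SSS_\CT\cong[n]$ together with the fact that each component is closed under shifts, followed by the nonvanishing of $\HOH^0$ from Lemma~\ref{lemma-choh-additivity} to get indecomposability for connected $\CT$. The only discrepancy is a convention swap (under the paper's definition of $\langle\CA,\CB\rangle$ the given vanishing is $\Hom(\CB,\CA)=0$ and one deduces $\Hom(\CA,\CB)=0$, i.e.\ your argument with the roles of $\CA$ and $\CB$ interchanged), and the preliminary claim that $\CA$ and $\CB$ are themselves Calabi--Yau is not needed for your clean final argument.
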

\begin{proof}
Assume $\CT = \langle \CA,\CB \rangle$ is a semiorthogonal decomposition. Then for any $A \in \CA$, $B \in \CB$ we have
\begin{equation*}
\Hom(A,B) = 
\Hom(B,\SSS_\CT(A))^\vee =
\Hom(B,A[n])^\vee = 0,
\end{equation*}
since $A[n] \in \CA$ as $\CA$ is triangulated. Thus the decomposition is completely orthogonal.
Therefore by Lemma~\ref{lemma-choh-additivity} we have $\HOH^\bullet(\CT) = \HOH^\bullet(\CA) \oplus \HOH^\bullet(\CB)$.
So, if $\CT$ is connected it follows that either $\HOH^0(\CA) = 0$ or $\HOH^0(\CB) = 0$. But then again 
by Lemma~\ref{lemma-choh-additivity} we have $\CA = 0$ or $\CB = 0$.
\end{proof}

Because of their indecomposability connected Calabi--Yau categories can be considered as the simplest building blocks 
of geometric triangulated categories.

\subsection{Hochschild homology}\label{subsection-properties}

Let $\CA \subset \BD(X)$ be an admissible subcategory. Then it extends in two ways to a semiorthogonal decomposition
\begin{equation*}
\BD(X) = \langle \CA^\perp, \CA \rangle
\qquad\text{and}\qquad
\BD(X) = \langle \CA, {}^\perp\CA \rangle.
\end{equation*}
Denote by $P^R_\CA, P^L_\CA \in \BD(X\times X)$ the kernels of the projections onto $\CA$ with respect to these decompositions.

\begin{lemma}\label{cylr}
If $\CA$ is $n$-Calabi--Yau then there exists a canonical isomorphism $P^L_\CA[n] \cong P^R_\CA \circ \SSS_X$.
\end{lemma}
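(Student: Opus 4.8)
The plan is to identify the two projection kernels $P^R_\CA$ and $P^L_\CA$ by recognizing them as mutations of the diagonal and then apply the Serre-duality relation between the two mutation functors. Recall that for the semiorthogonal decomposition $\BD(X) = \langle \CA^\perp, \CA \rangle$ the projection onto $\CA$ is the composition $\alpha\alpha^!$, while for $\BD(X) = \langle \CA, {}^\perp\CA \rangle$ it is $\alpha\alpha^*$, where $\alpha:\CA \to \BD(X)$ is the embedding. By Lemma~\ref{lemma-kernels-projectors} both of these are Fourier--Mukai functors with kernels $P^R_\CA$ and $P^L_\CA$ respectively, and, as recalled in the preliminaries, the units and counits of all the adjunctions involved are induced by morphisms of kernels. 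So the whole argument can be run on the level of kernels, i.e.\ of endofunctors of $\BD(X)$.

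First I would relate the two projections via mutation. From the definition~\eqref{def-left-mut} of $\LL_\CA$ applied to the decomposition $\BD(X) = \langle \CA^\perp, \CA \rangle$, there is a triangle $\alpha\alpha^! \to \id \to \LL_\CA$; but $\LL_\CA$ here, being left mutation through $\CA$ inside $\langle \CA^\perp, \CA\rangle$, is precisely the projection onto $\CA^\perp$ followed by the canonical identification, so $\LL_\CA = P^{?}_{\CA^\perp}$ in the appropriate sense. The cleaner route: use Lemma~\ref{lemma-serre-mutation}. Applied to $\BD(X) = \langle \CA^\perp, \CA \rangle$ it gives $\SSS_\CA = \RR_{\CA^\perp} \circ \SSS_X$, and applied to $\BD(X) = \langle \CA, {}^\perp\CA \rangle$ it gives $\SSS_\CA^{-1} = \LL_{{}^\perp\CA} \circ \SSS_X^{-1}$, i.e.\ $\SSS_\CA = \SSS_X \circ \LL_{{}^\perp\CA}^{-1}$. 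Now the point is that $\CA$ being $n$-Calabi--Yau means $\SSS_\CA \cong [n]$ on $\CA$; so as endofunctors restricted to $\CA$ we get $\RR_{\CA^\perp} \circ \SSS_X \cong [n] \cong \SSS_X \circ \LL_{{}^\perp\CA}^{-1}$ on $\CA$. I would rather, though, argue directly with kernels: the key identity is that the right projection kernel composed with $\SSS_X$ and the left projection kernel shifted by $n$ both compute, on objects, the functor $F \mapsto (\text{projection of } F \text{ to }\CA)$ twisted suitably, and the Calabi--Yau hypothesis forces them to agree.

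Concretely, the cleanest chain I would write is: $P^L_\CA = \alpha\alpha^*$, and by Lemma~\ref{lemma-serre-basics}(i), the left adjoint $\alpha^*$ and right adjoint $\alpha^!$ of $\alpha$ are related by $\alpha^! \circ \SSS_X = \SSS_\CA \circ \alpha^*$ (here $\CT_1 = \CA$, $\CT_2 = \BD(X)$, $\Phi = \alpha$). Composing with $\alpha$ on the left: $\alpha\alpha^! \circ \SSS_X = \alpha \circ \SSS_\CA \circ \alpha^*$. Now $\alpha \circ \SSS_\CA = \SSS_X \circ \alpha$ is false in general, but $\alpha\circ\SSS_\CA \cong \alpha \circ [n]$ since $\CA$ is $n$-Calabi--Yau, and $\alpha \circ [n] = [n] \circ \alpha$ because shifts commute with everything. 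Hence $P^R_\CA \circ \SSS_X = \alpha\alpha^! \circ \SSS_X \cong [n] \circ \alpha\alpha^* = P^L_\CA[n]$, where I use that $[n]$ as a Fourier--Mukai kernel is just $\Delta_*\CO_X[n]$ and convolving with it is the shift. All of these isomorphisms are canonical: the adjunction isomorphism of Lemma~\ref{lemma-serre-basics}(i) is canonical, the Calabi--Yau isomorphism $\SSS_\CA \cong [n]$ is fixed once and for all, and the kernel-level interpretations of $\alpha\alpha^!$, $\alpha\alpha^*$ are canonical by Lemma~\ref{lemma-kernels-projectors}. This gives the desired canonical isomorphism $P^L_\CA[n] \cong P^R_\CA \circ \SSS_X$.

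The main obstacle I anticipate is making sure that all the maneuvers are legitimately happening \emph{at the level of Fourier--Mukai kernels} rather than merely at the level of functors, so that the resulting isomorphism is an isomorphism of objects in $\BD(X\times X)$ and not just a natural isomorphism of endofunctors (which a priori could fail to be induced by a kernel morphism). The paper's conventions (following \cite{anno2012adjunctions} and \cite{kuznetsov2011base}) are set up exactly so that units, counits, Serre functors, and projection functors are all realized by explicit kernels and kernel morphisms; the careful point is that the identity $\alpha^!\circ\SSS_X \cong \SSS_\CA\circ\alpha^*$ from Lemma~\ref{lemma-serre-basics}(i), and the Calabi--Yau identification $\SSS_\CA \cong [n]$, are compatible with these kernel realizations. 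I would make this precise by noting that $P^R_\CA$ is characterized as the kernel of the functor $F \mapsto$ the $\CA$-component of $F$ in the filtration of Lemma~\ref{lemma-kernels-projectors}, that $P^L_\CA$ is the analogous kernel for the other decomposition, and that the chain of isomorphisms above is a composition of canonical isomorphisms each of which is known to be kernel-induced — so their composite is as well.
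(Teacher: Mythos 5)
Your proof is correct and is essentially the paper's own argument: identify $P^L_\CA = \alpha\alpha^*$ and $P^R_\CA = \alpha\alpha^!$, apply Lemma~\ref{lemma-serre-basics}(i) to get $\alpha^!\circ\SSS_X = \SSS_\CA\circ\alpha^* \cong \alpha^*[n]$, and compose with $\alpha$. The mutation detour and the kernel-level caveats are fine but not needed beyond the conventions already fixed in the preliminaries.
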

\begin{proof}
Let $\alpha:\CA \to \BD(X)$ be the embedding functor. Then $P^L_\CA = \alpha\alpha^*$ and $P^R_\CA = \alpha\alpha^!$. 
By Lemma~\ref{lemma-serre-basics}(i) we have
\begin{equation*}
\alpha^! \circ \SSS_X = \SSS_{\CA} \circ \alpha^* = \alpha^*[n].
\end{equation*}
Composing this with $\alpha$ we deduce the claim.
\end{proof}

It follows that for a Calabi--Yau subcategory the Hochschild cohomology coincides with
the Hochschild homology, up to a shift.

\begin{proposition}\label{proposition-cy-hoh}
If $\CA \subset \BD(X)$ is an $n$-Calabi--Yau admissible subcategory
then
$$
\HOH^k(\CA) \cong \HOH_{k-n}(\CA)
$$
for any $k \in \ZZ$.
\end{proposition}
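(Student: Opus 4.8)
The plan is to use the two kernel descriptions supplied by Lemma~\ref{cylr} together with the formula for Hochschild homology and cohomology in terms of projection kernels (the Lemma of~\cite{kuznetsov2009hochschild} quoted in Section~\ref{subsection-hochschild}). First I would observe that Hochschild cohomology of an admissible subcategory $\CA$ can be computed using \emph{either} of the two projection kernels: by~\cite[Thm.~7.3]{kuznetsov2009hochschild} (or the relevant statement in that paper) one has $\HOH^\bullet(\CA) = \Ext^\bullet(P^R_\CA, P^R_\CA) = \Ext^\bullet(P^L_\CA, P^L_\CA)$, since $P^L_\CA$ and $P^R_\CA$ are the Fourier--Mukai kernels of the two projection functors and these functors become identified after composing with the mutation equivalence $\CA \to \CA$; in any case the quoted Lemma gives $\HOH^\bullet(\CA) = \Ext^\bullet(P,P)$ for $P$ the kernel of ``the'' projection, and one checks that both $P^L_\CA$ and $P^R_\CA$ serve. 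Similarly $\HOH_\bullet(\CA) = \Ext^\bullet(P, P\circ\SSS_X)$.

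Then the core computation is short: using the right kernel $P = P^R_\CA$ for cohomology and the left kernel $P = P^L_\CA$ for homology,
\begin{align*}
\HOH_{k-n}(\CA)
&= \Ext^{k-n}(P^L_\CA, P^L_\CA \circ \SSS_X) \\
&\cong \Ext^{k-n}(P^L_\CA, P^R_\CA \circ \SSS_X \circ \SSS_X) \qquad\text{?}
\end{align*}
— here I must be careful: Lemma~\ref{cylr} gives $P^L_\CA[n] \cong P^R_\CA \circ \SSS_X$, i.e. $P^L_\CA \circ \SSS_X \cong P^R_\CA \circ \SSS_X^2[-n]$ is \emph{not} what I want. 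Rather the clean route is: compute $\HOH_\bullet(\CA)$ with the \emph{right} kernel as well. Since both kernels compute $\HOH_\bullet$, write $\HOH_{k-n}(\CA) = \Ext^{k-n}(P^R_\CA, P^R_\CA \circ \SSS_X)$. Now substitute $P^R_\CA \circ \SSS_X \cong P^L_\CA[n]$ from Lemma~\ref{cylr} into the \emph{second} argument, obtaining $\Ext^{k-n}(P^R_\CA, P^L_\CA[n]) = \Ext^{k}(P^R_\CA, P^L_\CA)$. Finally one needs $\Ext^k(P^R_\CA, P^L_\CA) \cong \Ext^k(P^R_\CA, P^R_\CA) = \HOH^k(\CA)$, which holds because the natural morphism of kernels $P^R_\CA \to P^L_\CA$ (coming from the counit/unit identifying the two projections up to the mutation equivalence, cf. Lemma~\ref{lemma-serre-mutation}) induces an isomorphism on $\Ext$ from $P^R_\CA$, or alternatively because $\Ext^\bullet(P^R_\CA, P^L_\CA)$ is yet another standard model for $\HOH^\bullet(\CA)$ as in~\cite{kuznetsov2009hochschild}.

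The step I expect to be the main obstacle is the last identification $\Ext^\bullet(P^R_\CA, P^L_\CA) \cong \HOH^\bullet(\CA)$: one must pin down precisely which $\Ext$-groups between the two projection kernels compute Hochschild cohomology, and check that the canonical comparison morphism between $P^R_\CA$ and $P^L_\CA$ is the right one to use. This is exactly the content of the kernel formalism in~\cite{kuznetsov2009hochschild}, so I would cite that paper for the statement that $\HOH^\bullet(\CA) = \Ext^\bullet(P^R_\CA, P^L_\CA)$ (both kernels being valid ``enhancement substitutes'' for $\CA$), and then the proof reduces to the one-line substitution of Lemma~\ref{cylr} displayed above. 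If a fully self-contained argument is wanted, one instead shifts the roles: $\HOH^k(\CA) = \Ext^k(P^R_\CA, P^R_\CA)$, and $\HOH_{k-n}(\CA) = \Ext^{k-n}(P^R_\CA, P^R_\CA\circ \SSS_X) \cong \Ext^{k-n}(P^R_\CA, P^L_\CA[n]) = \Ext^k(P^R_\CA, P^L_\CA) \cong \Ext^k(P^R_\CA, P^R_\CA)$, the last step using that $P^R_\CA \to P^L_\CA$ is an isomorphism after applying $\Hom(P^R_\CA,-)$ since $\Cone$ of this morphism of kernels represents a functor landing in a subcategory orthogonal to $\CA$ on the relevant side.
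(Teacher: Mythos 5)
Your overall strategy (Ext groups between the two projection kernels plus Lemma~\ref{cylr}) differs from the paper's, and it could probably be pushed through, but as written it has a genuine gap at exactly the step you flag: the identification $\Ext^\bullet(P^R_\CA,P^L_\CA)\cong\HOH^\bullet(\CA)$. The lemma quoted in Section~\ref{subsection-hochschild} gives $\HOH^\bullet(\CA)=\Ext^\bullet(P,P)$ for a single projection kernel; it says nothing about the mixed groups $\Ext^\bullet(P^R_\CA,P^L_\CA)$, so this cannot simply be cited. Your fallback argument is also wrong as stated: the cone of the kernel morphism $P^R_\CA\to\Delta_*\CO_X\to P^L_\CA$ represents the functor $T\mapsto\Cone(\alpha\alpha^!T\to\alpha\alpha^*T)$, which vanishes on $\CA$ but sends both $\CA^\perp$ and ${}^\perp\CA$ \emph{into} $\CA$ --- so it does not ``land in a subcategory orthogonal to $\CA$''. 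The property you actually need is the vanishing on $\CA$, and even that is a statement about the induced functor: by itself it does not yield $\Ext^\bullet_{X\times X}(P^R_\CA,\Cone(P^R_\CA\to P^L_\CA))=0$. To get such a vanishing one must work at the level of kernels, e.g.\ locate $P^R_\CA$ and the cone in the semiorthogonal decomposition of $\BD(X\times X)$ induced by $\BD(X)=\langle\CA^\perp,\CA\rangle$, or invoke the corresponding kernel-level orthogonality statements of~\cite{kuznetsov2009hochschild}; none of this is supplied, so the proof is incomplete at its crucial point.

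For comparison, the paper's proof avoids comparing the Ext algebras of the two kernels altogether: it uses the expressions $\HOH^\bullet(\CA)=H^\bullet(X,\Delta^!P^L_\CA)$ and $\HOH_\bullet(\CA)=H^\bullet(X,\Delta^*P^R_\CA)$ from \cite[Prop.~8.1]{kuznetsov2009hochschild}, with $\Delta:X\to X\times X$ the diagonal embedding, and then Lemma~\ref{cylr} gives
\begin{equation*}
\Delta^!P^L_\CA\cong\Delta^!(P^R_\CA\circ\SSS_X)[-n]\cong\Delta^*P^R_\CA[-n],
\end{equation*}
the last isomorphism because convolution with the Serre kernel exactly cancels the discrepancy between $\Delta^!$ and $\Delta^*$. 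If you want to keep your route, the missing ingredient is a genuine proof that $\Hom^\bullet(P^R_\CA,\Cone(P^R_\CA\to P^L_\CA))=0$, argued through kernel-level (not merely functor-level) semiorthogonality.
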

\begin{proof}
As before, let $P^L_\CA$ and $P^R_\CA$ be the kernels of the left and right projection onto $\CA$.
Let $\Delta:X \to X \times X$ be the diagonal embedding. By \cite[Prop.~8.1]{kuznetsov2009hochschild} we have
\begin{equation*}
\HOH^\bullet(\CA) = H^\bullet(X,\Delta^!P^L_\CA).
\end{equation*}
Also, the same argument shows that
\begin{equation*}
\HOH_\bullet(\CA) = H^\bullet(X,\Delta^*P^R_\CA).
\end{equation*}
Now using Lemma~\ref{cylr} we obtain
\begin{equation*}
H^\bullet(X,\Delta^!P^L_\CA) =
H^\bullet(X,\Delta^!(P^R_\CA \circ \SSS_X)[-n]) =
H^\bullet(X,\Delta^*P^R_\CA [-n])
\end{equation*}
which gives the required identification.
\end{proof}

The following is a useful consequence of this result.

\begin{corollary}\label{corollary-hoh-cy}
If $\CA \subset \BD(X)$ is a nonzero $n$-Calabi--Yau admissible subcategory
then $\HOH_{-n}(\CA) \ne 0$.
\end{corollary}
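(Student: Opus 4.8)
The plan is to deduce this corollary from Proposition~\ref{proposition-cy-hoh} together with the nonvanishing of Hochschild cohomology in degree zero. First I would apply Proposition~\ref{proposition-cy-hoh} with $k = 0$, which gives a canonical isomorphism $\HOH^0(\CA) \cong \HOH_{-n}(\CA)$. So it suffices to show that $\HOH^0(\CA) \ne 0$ whenever $\CA$ is a nonzero admissible subcategory of $\BD(X)$. But this is exactly the second assertion of Lemma~\ref{lemma-choh-additivity}: for a nonzero admissible $\CA$, the Fourier--Mukai kernel $P$ of the projection onto $\CA$ (which exists by Lemma~\ref{lemma-kernels-projectors} applied to the decomposition $\BD(X) = \langle \CA^\perp, \CA\rangle$, or directly from \cite{kuznetsov2009hochschild}) is a nonzero object and hence has a nonzero identity endomorphism, so $\HOH^0(\CA) = \Ext^0(P,P) \ne 0$.

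Concretely, the proof is just the chain
\begin{equation*}
\HOH_{-n}(\CA) \cong \HOH^0(\CA) \ne 0,
\end{equation*}
where the isomorphism is Proposition~\ref{proposition-cy-hoh} and the nonvanishing is Lemma~\ref{lemma-choh-additivity}. One should perhaps note explicitly that $P \ne 0$ because the projection functor onto a nonzero subcategory is not the zero functor (it restricts to the identity on $\CA$), so its kernel cannot be the zero object.

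There is essentially no obstacle here; the statement is a formal corollary once Proposition~\ref{proposition-cy-hoh} is in hand. The only point requiring a moment's care is making sure that the degree-zero nonvanishing of $\HOH^\bullet$ cited from Lemma~\ref{lemma-choh-additivity} is genuinely available for an arbitrary nonzero admissible subcategory and does not secretly rely on the completely orthogonal hypothesis in that lemma's first sentence — but the proof of that lemma shows the nonvanishing part is proved independently, just from nonvanishing of the projection kernel. So the whole argument is two lines.
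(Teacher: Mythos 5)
Your proof is correct and is exactly the paper's argument: the identification $\HOH_{-n}(\CA) \cong \HOH^0(\CA)$ from Proposition~\ref{proposition-cy-hoh} with $k=0$, combined with the nonvanishing statement of Lemma~\ref{lemma-choh-additivity}, whose proof (nonzero projection kernel has a nonzero identity endomorphism) indeed does not use the completely orthogonal hypothesis. Nothing further is needed.
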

\begin{proof}
We have $\HOH_{-n}(\CA) = \HOH^0(\CA)$ which is nonzero by Lemma~\ref{lemma-choh-additivity}.
\end{proof}

\begin{remark}
The Proposition can be also used to show that some Calabi--Yau categories of geometric origin are connected (and hence indecomposable). For example, in the situation
of Corollary~\ref{corollary-divisor-pn} by Proposition~\ref{proposition-cy-hoh} we have
$\HOH^0(\CA_X) = \HOH_{-(n+1)(d-2)/d}(\CA_X)$. But by additivity Lemma~\ref{lemma-hoh-additivity}
and the HKR isomorphism this is equal to $H^{n-(n+1)/d,(n+1)/d-1}(X)$ which is one-dimensional by the Griffiths Residue Theorem.
\end{remark}

\subsection{The dimension of Calabi-Yau subcategories}\label{subsection-cydim}

Let $\CA \subset \BD(X)$ be a Calabi--Yau subcategory. In this section we discuss what can be said about the CY-dimension of $\CA$
and its relation to $\dim X$.

First, there are some reasons to believe in the following

\begin{conjecture}\label{congecture-cy-nonnegative}
If $\CA \subset \BD(X)$ is a Calabi--Yau subcategory then its CY-dimension is nonnegative.
\end{conjecture}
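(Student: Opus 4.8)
The plan is to deduce nonnegativity of the CY-dimension from the geometric origin of $\CA$, using the interaction between Hochschild homology and cohomology established in Proposition~\ref{proposition-cy-hoh}, together with the boundedness of Hochschild invariants coming from $X$. First I would suppose $\CA \subset \BD(X)$ is $n$-Calabi--Yau with $n < 0$ and aim for a contradiction. The key inputs are: by Corollary~\ref{corollary-hoh-cy} we have $\HOH_{-n}(\CA) \ne 0$; by additivity (Lemma~\ref{lemma-hoh-additivity}), $\HOH_\bullet(\CA)$ is a direct summand of $\HOH_\bullet(\BD(X))$; and by the HKR isomorphism, $\HOH_k(\BD(X)) = \bigoplus_{q-p=k} H^q(X,\Omega^p_X)$ vanishes for $k > \dim X$ and, crucially, $\HOH_k(\BD(X)) = 0$ for $k$ strictly below $-\dim X$, while $\HOH_{-\dim X}(\BD(X)) = H^0(X,\Omega^{\dim X}_X)$.

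The main step is then a parity/degree argument. If $n < 0$ then $-n > 0$, so $\HOH_{-n}(\CA) \ne 0$ forces $-n \le \dim X$, i.e.\ $n \ge -\dim X$; this already confines $n$ to a finite range but does not yet exclude negative values. To push further I would invoke Proposition~\ref{proposition-cy-hoh}: $\HOH^0(\CA) \cong \HOH_{-n}(\CA)$, and more generally $\HOH^k(\CA) \cong \HOH_{k-n}(\CA)$ for all $k$. Since $\CA$ is admissible and nonzero, $\HOH^0(\CA) \ne 0$ by Lemma~\ref{lemma-choh-additivity}. The point is that $\HOH^\bullet$ is concentrated in nonnegative degrees for a geometric category: $\HOH^k(\BD(X)) = \bigoplus_{p+q=k} H^q(X,\Lambda^p T_X) = 0$ for $k<0$, and Hochschild cohomology of an admissible subcategory $\CA$ is computed by $\Ext^\bullet(P,P)$ where $P$ is the projection kernel, which by semicontinuity has no negative $\Ext$'s since $\BD(X)$ has none. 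Combining, $\HOH^k(\CA)=0$ for $k<0$, hence $\HOH_{k-n}(\CA)=0$ for $k<0$, i.e.\ $\HOH_j(\CA)=0$ for all $j<-n$. Dually, one would like $\HOH^k(\CA) = 0$ for $k$ large, which would give $\HOH_j(\CA) = 0$ for $j > -n + N$ for suitable $N$; this is automatic since $\HOH^\bullet(\BD(X))$ is concentrated in degrees $[0,2\dim X]$.

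The clincher I would aim for: the Serre functor being $[n]$ with $n<0$ means the category, viewed through the lens of Proposition~\ref{proposition-cy-hoh}, has its nonzero Hochschild homology ``shifted up'' past the geometric range. Concretely, if $n<0$, then the symmetry $\HOH^k(\CA) \cong \HOH_{k-n}(\CA)$ together with $\HOH^k$ supported in $k\ge 0$ gives $\HOH_j$ supported in $j \ge -n \ge 1$; but one also knows from HKR and additivity that $\HOH_0(\BD(X)) \supseteq \HOH_0(\CA)$ contains the contribution $H^0(X,\CO_X) \otimes (\text{something})$... — here I must be careful, because $\HOH_0(\CA)$ can indeed vanish. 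The genuinely robust argument is: $\HOH_{-n}(\CA) = \HOH^0(\CA) \ne 0$ forces $-n \ge 0$ only if we can rule out $-n<0$, which is trivial since $-n > 0$ when $n<0$, so this step actually shows nothing new, and one instead needs to combine $\HOH_{-n}(\CA) \ne 0$ with the vanishing $\HOH_j(\CA) = 0$ for $j \notin [-\dim X, \dim X]$ and the \emph{Serre-duality symmetry of} $\HOH_\bullet(\CA)$ itself, namely $\HOH_j(\CA) \cong \HOH_{-j-?}$ coming from $\SSS_\CA = [n]$ acting on the module structure.

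I expect the main obstacle to be precisely pinning down which degree-boundedness statement is strong enough: the naive range bound $-\dim X \le n \le \dim X$ from Hochschild homology is easy, but excluding $-\dim X \le n < 0$ requires either a more refined positivity of $\HOH^\bullet(\CA)$ (that $\HOH^{<0}(\CA)=0$ together with the isomorphism $\HOH^k(\CA)\cong\HOH_{k-n}(\CA)$, giving $\HOH_{<-n}(\CA)=0$ and hence, were there also an upper bound forcing $\HOH_\bullet(\CA)$ to be symmetric about $0$, a contradiction with $\HOH_0(\CA)$'s behavior), or — more honestly — this may be exactly why the statement is phrased as a \textbf{conjecture} rather than a theorem, so the ``proof'' here should be read as indicating the expected line of attack and the evidence (the range bound, the $n$-CY $\Rightarrow$ $\HOH_{-n}\ne 0$ fact) rather than a complete argument; I would flag the gap at the point where negativity of $n$ within the HKR range cannot be excluded by formal means alone.
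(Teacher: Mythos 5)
This statement is stated in the paper as a \emph{conjecture}: the text says only that ``there are some reasons to believe'' it, and no proof is given, so there is no paper argument for you to match. Your proposal, read as an attempted proof, does not prove the statement either --- but your own diagnosis of this is accurate. What you do establish is exactly the formal evidence available in the paper: by Corollary~\ref{corollary-hoh-cy}, additivity (Lemma~\ref{lemma-hoh-additivity}) and HKR, a nonzero $n$-Calabi--Yau component forces $-\dim X \le n \le \dim X$; the upper half of this is precisely the proof of Theorem~\ref{theorem-cy-dimension}, and the lower half is its mirror image. You are also right that these Hochschild-theoretic constraints, even combined with Proposition~\ref{proposition-cy-hoh}, cannot exclude $-\dim X \le n < 0$: the symmetry $\HOH^k(\CA)\cong\HOH_{k-n}(\CA)$ together with $\HOH_{-n}(\CA)\ne 0$ is perfectly consistent with a negative $n$, since nothing forces $\HOH_0(\CA)\ne 0$ for a proper component. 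So the gap you flag at the end is the genuine one, and it is why the statement remains a conjecture.

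One intermediate step of yours is not justified and should not be presented as established: the claim that $\HOH^{k}(\CA)=0$ for $k<0$ ``by semicontinuity, since $\BD(X)$ has none.'' The Hochschild cohomology of $\CA$ is $\Ext^\bullet(P,P)$ for the projection kernel $P\in\BD(X\times X)$, and an arbitrary object of $\BD(X\times X)$ can certainly have negative self-Exts (e.g.\ $P=A\oplus A[1]$ has $\Hom(P,P[-1])\ne 0$); moreover Hochschild cohomology is not additive for general semiorthogonal decompositions (only for completely orthogonal ones, Lemma~\ref{lemma-choh-additivity}), so vanishing for $\BD(X)$ does not transfer to $\CA$ by any formal mechanism invoked in the paper. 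Whether $\HOH^{<0}(\CA)=0$ holds for admissible subcategories is itself a nontrivial assertion that would need proof; and, as you note, even granting it the argument only yields $\HOH_j(\CA)=0$ for $j<-n$, which produces no contradiction with $n<0$. In short: there is no proof to compare against, your attempt is correctly self-identified as incomplete, and the one unflagged weak point is the unsupported negative-degree vanishing of $\HOH^\bullet(\CA)$.
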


This Conjecture gives a lower bound for the CY-dimension of $\CA$. On the other hand, there is an evident upper bound.

\begin{theorem}\label{theorem-cy-dimension}
If $\CA \subset \BD(X)$ is an $n$-Calabi--Yau subcategory then $n \le \dim X$.
\end{theorem}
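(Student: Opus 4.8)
The key point is that an $n$-Calabi--Yau subcategory $\CA$ has nonzero Hochschild homology in degree $-n$, and on the other hand the Hochschild homology of $\BD(X)$ vanishes outside the range $[-\dim X, \dim X]$; since Hochschild homology is additive over semiorthogonal decompositions, the homology of $\CA$ embeds in that of $\BD(X)$, forcing $n \le \dim X$.

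First I would invoke Corollary~\ref{corollary-hoh-cy}: since $\CA$ is a nonzero $n$-Calabi--Yau admissible subcategory, $\HOH_{-n}(\CA) \ne 0$. Next, extend $\CA$ to a semiorthogonal decomposition $\BD(X) = \langle \CA^\perp, \CA \rangle$ (or $\langle \CA, {}^\perp\CA\rangle$), and apply the additivity Lemma~\ref{lemma-hoh-additivity} to conclude that $\HOH_\bullet(\CA)$ is a direct summand of $\HOH_\bullet(\BD(X))$. In particular $\HOH_{-n}(\BD(X)) \ne 0$. Finally, by the HKR isomorphism
\begin{equation*}
\HOH_{-n}(\BD(X)) = \bigoplus_{q - p = -n} H^q(X,\Omega^p_X),
\end{equation*}
and for this to be nonzero we need some $p, q$ with $0 \le p, q \le \dim X$ and $q - p = -n$, i.e. $p - q = n$; since $p \le \dim X$ and $q \ge 0$ this gives $n = p - q \le \dim X$, as required.

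I do not expect a serious obstacle here: every ingredient — the nonvanishing of $\HOH_{-n}(\CA)$, the additivity of Hochschild homology, and the HKR description of $\HOH_\bullet(\BD(X))$ — has already been established or cited in the preliminary sections. The only point worth stating carefully is that one needs $\CA$ nonzero (so that Corollary~\ref{corollary-hoh-cy} applies); if $\CA = 0$ the statement is vacuous (or one adopts the convention that the zero category is $n$-Calabi--Yau for every $n$, in which case the claim should be read as applying to nonzero $\CA$). The argument is short enough that it can be written out in full in a few lines.
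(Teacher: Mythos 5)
Your proposal is correct and follows exactly the paper's own argument: nonvanishing of $\HOH_{-n}(\CA)$ from Corollary~\ref{corollary-hoh-cy}, additivity of Hochschild homology over the semiorthogonal decomposition, and the HKR isomorphism forcing $\HOH_{-n}(\BD(X))=0$ when $n>\dim X$. The only (harmless) addition is your explicit remark about the nonzero hypothesis on $\CA$, which the paper leaves implicit.
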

\begin{proof}
By Lemma~\ref{lemma-hoh-additivity} and Corollary~\ref{corollary-hoh-cy} we have $\HOH_{-n}(\BD(X)) \ne 0$.
But by Hochschild--Kostant--Rosenberg isomorphism we have
\begin{equation*}
\HOH_{-n}(\BD(X)) = \bigoplus_{p \in \ZZ} H^{p-n}(X,\Omega^p_X),
\end{equation*}
and if $n > \dim X$ the right hand side is zero.
\end{proof}

We conjecture that the inequality is strict unless $X$ itself is a blowup of a CY-variety.

\begin{conjecture}
Assume $\CA \subset \BD(X)$ is an $n$-Calabi--Yau category with
$n = \dim X$. 
Then there is a regular birational morphism $X \to X'$ onto a smooth projective variety $X'$
with trivial canonical class and an equivalence $\CA \cong \BD(X')$.
\end{conjecture}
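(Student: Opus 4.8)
The plan is to feed the Hochschild-homological constraints available from the preceding sections into the minimal model program. First I would record the one structural consequence that is immediate: by Corollary~\ref{corollary-hoh-cy} one has $\HOH_{-n}(\CA)\neq 0$, and since $\CA$ is a semiorthogonal component of $\BD(X)$, Lemma~\ref{lemma-hoh-additivity} makes $\HOH_{-n}(\CA)$ a direct summand of $\HOH_{-n}(\BD(X))$. For $n=\dim X$ the HKR isomorphism collapses the latter to $H^{0}(X,\omega_X)$, so $p_g(X)\geq 1$; in particular $\kappa(X)\geq 0$ and $X$ is not uniruled. The first real task is to upgrade this to $\kappa(X)=0$: one expects the $n$-Calabi--Yau component to force the pluricanonical systems of $X$ to be as degenerate as possible, and the natural route is a pluricanonical refinement of the Hochschild computation, controlling $H^{0}(X,\omega_X^{\otimes m})$ for all $m$ in terms of $\CA$; this needs input outside the present circle of ideas, and I regard it as the first obstacle.

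Granting $\kappa(X)=0$, the canonical ring of $X$ is a one-dimensional graded domain, so $p_g(X)=h^{0}(X,\omega_X)=1$ automatically. Working in characteristic zero and granting the minimal model program and abundance in dimension $\dim X$, $X$ is then birational to a minimal model $X_{\min}$ with $K_{X_{\min}}$ numerically trivial; since $p_g$ is a birational invariant the class $K_{X_{\min}}$ is in fact trivial, and one passes --- via a crepant resolution, when available --- to a smooth $X'$ with $\omega_{X'}\cong\CO_{X'}$. The crux of the conjecture is that the birational map $X\dashrightarrow X'$ so obtained must in fact be a \emph{morphism}: for a general variety of Kodaira dimension zero the contraction to a minimal model is built from flips and flops and need not be regular, and derived categories alone cannot detect the difference, since flops induce derived equivalences (known for threefolds, expected in general) and $\CA$ would embed into $\BD(X_{\min})$ regardless. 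So the categorical datum --- the embedding $\CA\hookrightarrow\BD(X)$ of $X$ itself, not of some flop-equivalent model --- must be used precisely to rigidify the birational geometry; concretely I would try to show that the orthogonal $\CA^{\perp}$, which satisfies $\HOH_{-n}(\CA^{\perp})=0$, can be assembled only from exceptional objects and derived categories of smooth varieties of dimension $<n$ arranged exactly as in an iterated blowup.

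It remains to identify $\CA$ itself. Assuming $X\to X'$ is an iterated blowup of a smooth $X'$ with $\omega_{X'}\cong\CO_{X'}$, the blowup formula for derived categories gives $\BD(X)=\langle \D_1,\dots,\D_r,\BD(X')\rangle$ with each $\D_j$ equivalent to the derived category of a smooth variety of dimension $<n$, hence with $\HOH_{-n}(\D_j)=0$. Comparing with $\BD(X)=\langle\CA^{\perp},\CA\rangle$ through Lemma~\ref{lemma-hoh-additivity} in degree $-n$ forces $\HOH_{-n}(\CA)\cong\HOH_{-n}(\BD(X'))\cong\kk$, so by Proposition~\ref{proposition-cy-hoh} the category $\CA$ is connected, hence indecomposable by the indecomposability of connected Calabi--Yau categories (Section~\ref{subsection-indecomposability}), and likewise for $\BD(X')$. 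The final step, $\CA\cong\BD(X')$, is then a uniqueness statement: an $n$-Calabi--Yau component of $\BD(X)$ with $n=\dim X$ is determined up to equivalence; I would attempt it by mutating $\langle \D_1,\dots,\D_r,\BD(X')\rangle$ until the slot occupied by $\BD(X')$ coincides with the one occupied by $\CA$, using that none of the $\D_j$, having geometric dimension $<n$, can merge with or split off an $n$-Calabi--Yau piece. To summarize, the two main obstacles are: deducing $\kappa(X)=0$ (and the pluricanonical triviality) from the categorical hypothesis, and showing that the resulting contraction onto a smooth Calabi--Yau model is a morphism --- in effect, that the categorical input resolves the familiar non-uniqueness of minimal models; the concluding equivalence rests on a uniqueness principle for Calabi--Yau components that is plausible but not available in the needed generality.
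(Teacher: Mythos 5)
There is nothing in the paper to compare your argument against: the statement you were given is stated as a \emph{conjecture}, and the author offers no proof --- only the remark that it holds in dimension $1$ by Okawa's classification of semiorthogonal decompositions of curves, and that it is already unclear in dimension $2$. Your opening observation is correct and is essentially the only unconditional step: for $n=\dim X$, Corollary~\ref{corollary-hoh-cy}, Lemma~\ref{lemma-hoh-additivity} and HKR give $\HOH_{-n}(\CA)\subset \HOH_{-n}(\BD(X))=H^0(X,\omega_X)\neq 0$, hence $p_g(X)\geq 1$ and $\kappa(X)\geq 0$. Beyond that, what you have written is a programme, not a proof, and you say so yourself: the two steps you flag as obstacles --- deducing $\kappa(X)=0$ (indeed pluricanonical degeneration) from the existence of the Calabi--Yau component, and showing that the birational map to a Calabi--Yau model is a regular morphism from $X$ itself rather than from some flop of it --- are exactly where the content of the conjecture lies, and no mechanism in the paper (Hochschild homology is only additive, and says nothing about $H^0(\omega_X^{\otimes m})$ for $m\geq 2$) supplies them.

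Two further gaps in the later steps deserve to be named, because they would remain even if the first two were granted. First, in dimension $\geq 3$ a minimal model with $K$ numerically trivial has terminal singularities and in general admits \emph{no} crepant resolution, so the passage ``via a crepant resolution, when available'' to a smooth $X'$ with $\omega_{X'}\cong\CO_{X'}$ is not a harmless parenthesis but a substantive hypothesis; the conjecture asserts the existence of a smooth $X'$, and your route gives no reason for it. Second, even granting a regular birational morphism $X\to X'$ onto a smooth Calabi--Yau, such a morphism need not be an iterated blowup along smooth centers (weak factorization only gives a zig-zag of blowups and blowdowns), so the orthogonal decomposition $\BD(X)=\langle\D_1,\dots,\D_r,\BD(X')\rangle$ you invoke, and the concluding mutation/uniqueness argument identifying $\CA$ with $\BD(X')$, are both unsupported; uniqueness of Calabi--Yau components of a semiorthogonal decomposition is itself an open problem, not a principle you can cite. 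In short: the strategy is reasonable and the unconditional part is sound, but the proposal does not prove the statement --- which is consistent with the paper, where it is an open conjecture.
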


This conjecture is clearly true in dimension 1 since all semiorthogonal components
of curves have been classified (see~\cite{okawa2011semi}). However, even in dimension 2
it is not quite clear.

Another interesting problem in this direction is the classification of Calabi--Yau categories of small dimension (say of dimension 0 and 1).
Also the question of boundedness (is there only a finite number of deformation families of CY categories of a given dimension?) is interesting.

One can also ask similar questions about fractional Calabi--Yau categories. For sure, they don't have the indecomposability property.

\begin{example}
Let $X \subset \PP^3$ be a cubic surface and $\CA = \CO_X^\perp \subset \BD(X)$.
Then $\CA$ is a connected fractional Calabi--Yau category of dimension $4/3$. However,
if $X$ can be represented as a blowup of $\PP^2$ in 6 points (e.g., if the base field
$\kk$ is algebraically closed of characteristic zero) then $\CA$ is generated
by an exceptional collection, so it is far from being indecomposable.
\end{example}

It seems, however, quite plausible that analogues of Theorem~\ref{theorem-cy-dimension}
and of Conjecture~\ref{congecture-cy-nonnegative} could be true for fractional Calabi--Yau categories.


\end{document}